\let\mathcal\mathscr
\numberwithin{equation}{section}
\newcommand{\Mod}[1]{\ (\mathrm{mod}\ #1)}
\newcommand{\op}[1]{\operatorname{#1}}
\newcommand{\mc}[1]{\mathcal{#1}}
\newcommand{\prim}{\textrm{prim}}
\renewcommand{\d}{\mathrm{d}}
\renewcommand{\phi}{\varphi}
\renewcommand{\rho}{\varrho}
\newcommand{\0}{\mathbf{0}}
\renewcommand{\P}{\mathbb{P}}
\newcommand{\Proj}{\P}
\newcommand{\F}{\mathbb{F}}
\newcommand{\Z}{\mathbb{Z}}
\newcommand{\N}{\mathbb{N}}
\newcommand{\Q}{\mathbb{Q}}
\newcommand{\R}{\mathbb{R}}
\newcommand{\OO}{\mathcal{O}}
\newcommand{\del}{\partial}
\newcommand{\isom}{\cong}
\renewcommand{\leq}{\leqslant}
\renewcommand{\geq}{\geqslant}
\newcommand{\bm}{\mathbf}
\newcommand{\bmm}{\mathbf{m}}
\newcommand{\bx}{\mathbf{x}}
\newcommand{\by}{\mathbf{y}}
\newcommand{\bc}{\mathbf{c}}
\newcommand{\bv}{\mathbf{v}}
\newcommand{\bz}{\mathbf{z}}
\newcommand{\bw}{\mathbf{w}}
\newcommand{\bb}{\mathbf{b}}
\newcommand{\ba}{\mathbf{a}}
\newcommand{\bk}{\mathbf{k}}
\newcommand{\bms}{\mathbf{s}}
\newcommand{\br}{\mathbf{r}}
\newcommand{\eps}{\epsilon}
\newcommand{\al}{\alpha}
\newcommand{\beps}{\boldsymbol{\eps}}
\DeclareMathOperator{\Pic}{Pic}
\newcommand{\supp}{\op{supp}}
\renewcommand{\hat}{\widehat}
\newcommand{\sqf}{\op{sqf}}
\newcommand{\1}{\mathbf{1}}
\newtheorem{theorem}{Theorem}[section]
\newtheorem{conjecture}[theorem]{Conjecture}
\newtheorem{proposition}[theorem]{Proposition}
\newtheorem{lemma}[theorem]{Lemma}
\theoremstyle{definition}
\newtheorem{definition}[theorem]{Definition}
\newtheorem{remark}[theorem]{Remark}
\numberwithin{equation}{section}
\newcommand{\nid}{\noindent}
\newcommand{\ra}{\rightarrow}
\newcommand{\tensor}{\otimes}
\newcommand{\wt}{\widetilde{w}(\bx)}
\newcommand{\bs}{\backslash}
\newcommand{\PP}{\Proj}
\begin{document}
\date{\today}

\title{Sums of Four Squareful numbers}
\maketitle
\begin{center}
\author{Alec Shute}\\
\vspace{7pt}
\address{Institute of Science and Technology\\ Am Campus 1, 3400 Klosterneuburg, Austria}\\
\email{alec.shute@ist.ac.at}
\end{center}

\begin{abstract}
    We find an asymptotic formula for the number of primitive vectors $(z_1,\ldots,z_4)\in (\Z_{\neq 0})^4$ such that $z_1,\ldots, z_4$ are all squareful and bounded by $B$, and $z_1+\cdots + z_4 = 0$. Our result agrees in the power of $B$ and $\log B$ with the Campana--Manin conjecture of Pieropan, Smeets, Tanimoto and V\'{a}rilly-Alvarado.
\end{abstract}

\tableofcontents{}

\section{Introduction}

The notion of \textit{Campana points}, first discussed by Campana in  \cite{campana2004orbifolds} and \cite{campana2011orbifoldes}, and Abramovich in \cite{abramovich2009birational}, is receiving increasing attention in the field of arithmetic geometry. Campana points associated to an orbifold $(X,D)$ can be viewed as rational points on $X$ that are integral with respect to a weighted boundary divisor $D$, and thus provide a way to interpolate between integral and rational points.  

The quantitative study of the arithmetic of Campana orbifolds was kick-started by the discussions in \cite{abramovich2009birational} and \cite{poonen2006projective}. A motivating example for the development of the theory was given in \cite{poonen2006projective}, where Poonen poses the problem of finding the number of coprime integers $z_0,z_1$ such that $z_0,z_1$ and $z_0+z_1$ are all squareful and bounded by $B$ (We recall that a nonzero integer $z$ is $m$-\textit{full} if for any prime $p|z$, we have $p^m|z$, and \textit{squareful} if it is $2$-full). This corresponds to counting Campana points on the orbifold $(\PP^1,D)$, where $D$ is the divisor $\frac{1}{2}[0]+\frac{1}{2}[1]+\frac{1}{2}[\infty]$. Since this problem seems too difficult at the moment, in \cite{van2012squareful}, Van Valckenborgh considers the higher-dimensional analogue $(\PP^{k-2},D)$, where $D = \sum_{i=1}^{k}\frac{1}{2}D_i$ for hyperplanes $D_1,\ldots,D_k$. This leads to the study of the asymptotic size $N_k(B)$ of the set
\begin{equation}\label{sum of k square-full numbers}
\mathcal{N}_k(B) = \left\{\bz\in  (\Z_{\neq 0})^k_{\prim} : |z_i|\leq B, z_i \textrm{ squareful for all } i, \sum_{i=1}^kz_i=0\right\},
\end{equation}
as $B \ra \infty$. The main result in \cite[Theorem 1.1]{van2012squareful} is that for any $k\geq 5$, we have $N_k(B) = cB^{k/2-1} + O(B^{k/2 -1- \delta})$ for some constants $c,\delta >0$. The remaining cases of interest are $k=3$, corresponding to Poonen's original question, and $k=4$. In this paper, we treat the case $k=4$. We define
\begin{equation}\label{the thin set}\mathcal{T} = \{(z_1, \ldots ,z_4) \in \Z^4_{\prim}: z_1\cdots z_4 = \square\},\end{equation} 
and consider the counting problem
\begin{equation}\label{main counting problem}
N(B) = \#(\mathcal{N}_4(B)\bs\mathcal{T}).
\end{equation}
Our main result is the following theorem.

\begin{theorem}\label{the main theorem} For any $\eps>0$ we have
    \begin{equation}
        N(B) = cB + O(B^{734/735 + \eps}),
    \end{equation}
for an absolute constant $c>0$ given explicitly in (\ref{the leading constant}) and Lemma \ref{local densities lemma}. 
\end{theorem}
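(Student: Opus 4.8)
The plan is to parametrise squareful integers and reduce the count to a family of counting problems for quaternary quadratic forms, which I would then attack with the circle method in its $\delta$-method form.

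\textbf{Parametrisation.} Every squareful integer is uniquely of the form $\eta a^2 b^3$ with $\eta\in\{\pm1\}$, $a\geq 1$, and $b\geq 1$ squarefree. Writing $z_i=\eta_i a_i^2 b_i^3$, the equation $z_1+\dots+z_4=0$ becomes, for each fixed choice of signs $\bet=(\eta_i)$ and of squarefree $\bb=(b_i)$,
\[
Q_{\bb,\bet}(\ba):=\sum_{i=1}^4 \eta_i b_i^3 a_i^2=0,\qquad 1\leq a_i\leq (B/b_i^3)^{1/2},
\]
together with a coprimality condition coming from primitivity of $\bz$, which is removed by Möbius inversion over the (necessarily squareful) value of $\gcd(z_1,\dots,z_4)$. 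Positivity of the $a_i^2b_i^3$ forces the signs to be mixed, so up to permutation and overall sign $\bet$ has type $(2,2)$ or $(1,3)$. A short computation shows that $z_1z_2z_3z_4$ is a square exactly when $\prod_i\eta_i=1$ and $\prod_i b_i$ is a square; this is precisely the condition that the smooth quadric $\{Q_{\bb,\bet}=0\}\subset\PP^3$ be \emph{split}, equivalently that its geometric Picard group carry a trivial Galois action, which is the feature that would force an extra factor $\log B$ into the count. Removing $\mathcal{T}$ therefore removes exactly the split configurations, and $N(B)=\sum^{\flat}_{\bb,\bet}N_{\bb,\bet}(B)$, where $\flat$ restricts to admissible \emph{non-split} pairs $(\bb,\bet)$ and $N_{\bb,\bet}(B)$ counts $\ba$ in the box on $\{Q_{\bb,\bet}=0\}$ subject to the coprimality condition.

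\textbf{The circle method for fixed $(\bb,\bet)$.} For $\bb$ in a suitable main range I would evaluate $N_{\bb,\bet}(B)$ by the delta-method applied to the quaternary form $Q_{\bb,\bet}$, obtaining $N_{\bb,\bet}(B)=\mathfrak S(\bb,\bet)\,\mathfrak I(\bb,\bet)+E(\bb,\bet)$. Rescaling $a_i=(B/b_i^3)^{1/2}u_i$ turns $Q_{\bb,\bet}(\ba)=0$ into $\sum_i\eta_i u_i^2=0$, independent of $\bb$, so that the singular integral equals
\[
\mathfrak I(\bb,\bet)=C_{\bet}\cdot\frac{B}{(b_1b_2b_3b_4)^{3/2}},
\]
with $C_{\bet}$ the finite density of $\{\bu\in(0,1)^4:\sum_i\eta_i u_i^2=0\}$; the singular series $\mathfrak S(\bb,\bet)=\prod_p\sigma_p(\bb,\bet)$ converges in the non-split case and is $\ll_\eps (b_1b_2b_3b_4)^{\eps}$. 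The delicate point is that $n=4$ is the critical dimension for the circle method while the coefficients $b_i^3$ grow with $B$, so $E(\bb,\bet)$ must be controlled with explicit polynomial dependence on the $b_i$; the Kloosterman-type cancellation built into the delta-method is what makes this possible.

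\textbf{Assembly and the large-$\bb$ tail, and the main obstacle.} Summing the main terms over admissible non-split $(\bb,\bet)$, absolute convergence of $\sum_{\bb}(b_1b_2b_3b_4)^{-3/2+\eps}$ (exponent $>1$) gives the contribution $cB$, with $c$ obtained by reorganising the densities $C_{\bet}$ and the local factors $\sigma_p(\bb,\bet)$ into the archimedean density and Euler product of Lemma \ref{local densities lemma} and (\ref{the leading constant}). It then remains to bound the contribution of $\bb$ outside the main range: if some $b_i$ exceeds a small power $B^{\theta}$ then the box side $(B/b_i^3)^{1/2}$ is short, so I would fix the few $a_i$ lying in short variables and count solutions of the resulting ternary (respectively binary) quadratic equation by divisor-type bounds, summing over the number of large $b_i$ to get a power saving there; optimising $\theta$ against the error accumulated over the $O(B^{O(\eps)})$ admissible $\bb$ in the circle-method range is what produces the exponent $734/735$. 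The crux of the whole argument is the quaternary circle method with unbounded coefficients: one needs an asymptotic for $\#\{\ba\in\text{box}:Q_{\bb,\bet}(\ba)=0\}$ with a true power saving, uniform in $\bb$ up to $b_i$ of size a small power of $B$, together with a split/non-split dichotomy that matches the removal of $\mathcal{T}$ exactly so that no spurious $\log B$ survives in the non-thin count.
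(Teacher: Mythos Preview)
Your overall strategy coincides with the paper's: the same parametrisation $z_i=x_i^2y_i^3$ (your $a_i^2b_i^3$ with signs), the same identification of $\mathcal{T}$ with the split quadrics $Y=\square$, the same use of the Heath--Brown $\delta$-method with explicit coefficient dependence (Theorem~\ref{main circle method result}) for the main range, and a separate tail estimate combined at the end by optimising a cutoff parameter. Two points deserve comment.

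\textbf{The tail.} Your proposed treatment of large $\bb$ differs substantively from the paper's and, as written, has a gap. You suggest fixing the short $a_i$ attached to large $b_i$ and invoking ``divisor-type bounds'' for the residual ternary or binary quadratic in the remaining $a_j$. For a \emph{binary} residual this is fine, but when exactly one $b_i$ is large the residual is a genuine \emph{ternary} equation $\sum_{j\neq i}\eta_j b_j^3 a_j^2=c$, and representations of $c$ by a ternary quadratic form are not governed by a divisor bound; you would need an input of strength $O(c^{\eps})$, which is false in general, or else a pointwise circle-method bound for ternary forms uniform in the $b_j$, which is itself a hard problem. Moreover you still have to sum over all the small $b_j$, so the count does not collapse to a single divisor problem. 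The paper avoids this entirely: Section~\ref{section dealing with the large coefficients} bounds the full tail (condition $|Y|>D$, not any single $b_i$ large) by writing $N(\mathbf{X},\mathbf{Y})=\int_0^1\prod_k S_k(\alpha)\,\mathrm{d}\alpha$, applying H\"older to reduce to a single $\int|S_k|^4$, and then a double Cauchy--Schwarz to reach an auxiliary equation $x^2(y_1^3-y_2^3)=y^3(x_1^2-x_2^2)$ that \emph{is} amenable to divisor bounds. This yields the uniform estimate $N(\mathbf{X},\mathbf{Y})\ll(X_1\cdots X_4)^{1/2+\eps}(Y_1\cdots Y_4)^{2/3+\eps}$ of Proposition~\ref{dealing with the large coeffs}, whence $M(B,D)\ll B^{1+\eps}D^{-1/12}$.

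\textbf{Coprimality and the singular series.} Your remark that one ``M\"obius inverts over the squareful value of $\gcd(z_1,\dots,z_4)$'' glosses over a genuine subtlety: if $d$ is squareful and $d\mid z_i$ with $z_i$ squareful, the quotient $z_i/d$ need not be squareful (e.g.\ $72/8=9$ but $72/9=8$ while $72/6=12$ is not), so one cannot simply rescale. The paper's Section~5 instead tracks separately the divisibility of the $x_i$ and $y_i$ through a bespoke multiplicative weight $\omega(\br,\bms,s_0)$. Relatedly, your claim $\mathfrak{S}(\bb,\bet)\ll(\prod b_i)^{\eps}$ is too optimistic: Lemma~\ref{the one and only singular series estimate} gives $\mathfrak{G}_{\ba}\ll|A|^{\eps}\Delta^{1/4}$, and $\Delta(\by^3)$ can be as large as $|Y|^3$ when the $y_i$ share factors, so both the summation of main terms over $\by$ and the accumulation of circle-method errors require the $\Delta$-aware bookkeeping carried out in Lemmas~\ref{e1d} and~\ref{e2d}. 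None of this changes the architecture of the proof, but it does affect the arithmetic of the final optimisation that produces the exponent $734/735$ via $D=B^{4/245}$.
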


Interest in the quantitative arithmetic of Campana orbifolds has been further sparked by the recent work \cite{pieropan2019campana}, in which Pieropan, Smeets, Tanimoto and V\'{a}rilly-Alvarado formulate a Manin-type conjecture for Fano Campana orbifolds. The authors establish their conjecture in the special case of vector group compactifications, using the height zeta function method developed by Chambert-Loir and Tschinkel in \cite{chambert2002distribution} and \cite{chambert2012integral}.  

In Section 2, we discuss the compatibility of Theorem \ref{the main theorem} with the prediction from \cite[Conjecture 1.1]{pieropan2019campana}, which is henceforth referred to as the \textit{Campana--Manin conjecture}. We find that our result agrees with the Campana--Manin conjecture in the power of $B$ and $\log B$, and the set $\mc{T}$ that we remove corresponds to a thin set of Campana points. The leading constant $c$ from Theorem \ref{the main theorem} is discussed in Section \ref{section proof of theorem 1.2}. However, in the light of our recent work\cite{leadingconstant2021}, we do not expect the leading constant to agree with the prediction from the Campana--Manin conjecture without the removal of further thin sets. 

A number of other instances of the Campana--Manin conjecture have been studied in addition to the results mentioned above. In \cite{browning2019arithmetic}, Browning and Yamagishi consider the orbifold $(\PP^n, D)$ for $D = \sum_{i=0}^{n+1}(1-\frac{1}{m_i})D_i$, where $D_0,\ldots, D_n$ are coordinate hyperplanes, and $D_{n+1}$ is a general hyperplane. Their main result is an asymptotic formula for the number of points on this orbifold under the assumptions that $m_i\geq 2$ for all $i$, and that there exists some $j\in \{0,\ldots, n+1\}$ such that
$$\sum_{\substack{0\leq i \leq n+1\\ i \neq j}}\frac{1}{m_i(m_i+1)}\geq 1.$$
In \cite{pieropan2020hyperbola}, Pieropan and Schindler establish the Campana--Manin conjecture for complete smooth split toric varieties satisfying an additional technical assumption, by developing a very general version of the hyperbola method. In \cite{xiao2020campana}, Xiao treats the case of biequivariant compactifications of the Heisenberg group over $\Q$, using the height zeta function method. Finally, in \cite{streeter2020campana}, Streeter studies $m$-full values of norm forms by considering the orbifold $(\PP_K^{d-1},(1-\frac{1}{m})\mathbb{V}(N_{E/K}))$, where $K$ is a number field, $\mathbb{V}(N_{E/K})$ the divisor cut out by a norm form associated to a degree-$d$ Galois extension $E/K$, and $m\geq 2$ is an integer which is coprime to $d$ if $d$ is not prime. 

We now summarise the proof of Theorem \ref{the main theorem}. The approach is broadly similar to previous results by Browning and Yamagishi in \cite{browning2019arithmetic} and Van Valckenborgh in \cite{van2012squareful}. Every nonzero squareful integer $z$ can be written uniquely in the form $z=y^3x^2$, where $y$ is a nonzero square-free integer and $x$ is a positive integer. For a fixed choice of $\by=(y_1,\ldots,y_4)$, the equation $z_1+\cdots +z_4=0$ defines a quadric $Q_{\by}$ cut out by the polynomial $F_{\by^3}(\bx) = \sum_{i=1}^4 y_i^3x_i^2$. The condition $z_1\cdots z_4 = \square$ from (\ref{the thin set}) is equivalent to the condition $y_1\cdots y_4 = \square$. Therefore
\begin{equation}\label{fibre sum}
N(B) = \frac{1}{16}\sum_{\substack{\by \in (\Z_{\neq 0})^4 \\y_1,\ldots, y_4 \textrm{ square-free}\\y_1\cdots y_4 \neq \square}}N_{\by}(B),
\end{equation}
where
$$N_{\by}(B) =\#\left\{\bx \in (\Z_{\neq 0})^4: F_{\by^3}(\bx)=0, \begin{tabular}{l}$\gcd(x_1y_1, \ldots, x_4y_4) = 1$\\
$\max_{1\leq i \leq 4}|y_i^3x_i^2| \leq B$\end{tabular}\right\}.$$
The factor $1/16$ comes from the fact that  for all $i\in\{1,\ldots,4\}$, there are two choices for the sign of $x_i$ corresponding to the same choice of $z_i$. 

In Section \ref{section the circle method}, we study the closely related counting problem 
\begin{equation}\label{Main counting problem, with coeffs}N_{\ba}(B) =\#\left\{\bx \in (\Z_{\neq 0})^4: F_{\ba}(\bx) = 0, \max_{1\leq i\leq 4}|a_ix_i^2| \leq B\right\},\end{equation}
where we have removed the coprimality condition and replaced $\by$ with an arbitrary vector $\ba = (a_1,\ldots, a_4) \in (\Z_{\neq 0})^4$.

\begin{remark} Rational points on non-singular quadric surfaces are known to conform to the classical Manin's conjecture \cite{franke1989rational}. Let $A = a_1\cdots a_4$. It can be shown that the Picard group of the quadric $F_{\ba}(\bx)=0$ has rank 1 when $A\neq \square$ and rank 2 when $A=\square$. Thus Manin's conjecture predicts that 
\begin{equation}\label{manin for quadrics}N_{\ba}(B)\sim\begin{cases}
c_{\ba}B, &\textrm{ if }A \neq \square\\
c'_{\ba}B\log B, &\textrm{ if }A=\square,
\end{cases}
\end{equation}
for constants $c_{\ba},c'_{\ba}$ (depending on $\ba$) which can be expressed as a product of local densities. The extra factor of $\log B$ in the case $A = \square$ explains why it is necessary to remove the set $\mc{T}$ in the formulation of Theorem \ref{the main theorem}.
\end{remark}

In order to estimate $N_{\ba}(B)$, we apply a version of the circle method introduced by Duke, Friedlander and Iwaniec in \cite{duke1993bounds}, and refined by Heath-Brown in \cite{MR1421949}. The more classical forms of the circle method used in \cite{browning2019arithmetic} and \cite{van2012squareful} are not sufficient for our purposes, but the version in \cite{MR1421949} is particularly well-suited to dealing with quadratic forms. In fact, in \cite{MR1421949}, Heath-Brown proves a vast generalisation of (\ref{manin for quadrics}) by finding an asymptotic formula for the quantity
$$N_w(Q,B) = \sum_{\substack{\bx \in \Z^n\\Q(\bx)=0}}w(B^{-1}\bx),$$
for any $n\geq 3$, any non-singular indefinite quadratic form $Q \in \Z[x_1,\ldots,x_n]$, and any $w: \R^n \ra \R_{\geq 0}$ belonging to a suitably defined class of smooth weight functions. The following theorem is the main result from Section \ref{section the circle method}. It features the quantities 
$$\Delta = \prod_{i=1}^4 \gcd\left(a_i, \prod_{j\neq i}a_j\right),\quad A=a_1\cdots a_4,$$
as well as the \textit{singular series} $\mathfrak{G}_{\ba}$ and the \textit{singular integral} $\sigma_{\infty}(\beps)$, defined respectively in (\ref{definition of weightless singular integral}) and (\ref{singular series definition}). Note that $\sigma_{\infty}(\beps)$ only depends on $\eps_i=\op{sgn}(a_i)$.
\begin{theorem}\label{main circle method result}
Let $\ba \in (\Z_{\neq 0})^4$ be such that $A\neq \square$ and $|A|\leq B^{4/7}$. Then
\begin{equation}
    N_{\ba}(B)= \frac{\mathfrak{G}_{\ba}\sigma_{\infty}(\beps)B}{|A|^{1/2}}+ O\left(\frac{B^{41/42+\eps}\Delta^{1/3}}{|A|^{11/24}}\right),
\end{equation}
where the implied constant depends only on $\eps$.
\end{theorem}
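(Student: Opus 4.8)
The plan is to apply the delta-method version of the circle method from Heath-Brown's paper \cite{MR1421949} to the quaternary quadratic form $F_{\ba}(\bx) = \sum_{i=1}^4 a_i x_i^2$, introducing a smooth weight and then removing it. Since $F_{\ba}$ is non-singular and indefinite (as the $a_i$ need not all have the same sign — and when they do, $N_{\ba}(B)$ with the box condition is governed by a bounded region anyway), Heath-Brown's main theorem gives an expansion of $N_w(F_{\ba},B)$ as a main term $\mathfrak{G}_{\ba}\,\sigma_\infty(w)\,B^{n-2}$ with $n=4$, plus an error term. The subtlety, and the reason the statement tracks $|A|$ and $\Delta$ so carefully, is that all implied constants in \cite{MR1421949} depend on the coefficients of $Q$, whereas here we need uniformity in $\ba$ with explicit dependence; so the real work is redoing the relevant estimates with the coefficients $a_i$ carried through.

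First I would set up the weighted count: choose $w$ a smooth approximation to the indicator of the box $\{\max_i |a_i x_i^2|\le B\}$, i.e. rescale $x_i \mapsto |a_i|^{-1/2} B^{1/2} x_i$ so the region becomes comparable to a fixed box, at the cost of the Jacobian factor $|A|^{-1/2}B^2$. Applying the delta-method identity, $N_w = c_Q \sum_{q}\sum_{d\bmod q} \cdots$, one separates the main term (from the $\bc = \0$ frequency) from the error. The main term, after unfolding, produces precisely $\mathfrak{G}_{\ba}\,\sigma_\infty(\beps)\,B/|A|^{1/2}$; here $\sigma_\infty(\beps)$ only depends on the signs $\eps_i$ because after the rescaling the archimedean integral is over a fixed region depending only on which $x_i^2$ appear with which sign. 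The error term is bounded by estimating the exponential sums $S_q(\bc)$ (Gauss sums and their $\bc\ne\0$ analogues attached to $F_{\ba}$ modulo $q$) together with the oscillatory integrals $I_q(\bc)$. For a diagonal quadratic form the Gauss sum $S_q(\0)$ essentially factors as $\prod_i$ (quadratic Gauss sum in $a_i$) times a character sum; its size involves $\gcd(q,a_i)$, which is exactly where $\Delta$ and the condition $A\ne\square$ (ensuring the relevant character is non-trivial, so there is square-root cancellation rather than a main-term-sized contribution) enter. The condition $|A|\le B^{4/7}$ is what guarantees the error term genuinely beats the main term.

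The key steps in order: (i) rescale and choose the weight, reducing to Heath-Brown's framework with explicit $\ba$-dependence; (ii) isolate the $\bc=\0$ contribution and identify it with $\mathfrak{G}_{\ba}\sigma_\infty(\beps)B/|A|^{1/2}$, checking convergence of the singular series using $A\ne\square$; (iii) bound $S_q(\0)$ in terms of $q$ and the $\gcd(q,a_i)$, obtaining a saving expressed via $\Delta$ and $|A|$; (iv) bound the $\bc\ne\0$ terms via stationary phase / repeated integration by parts for $I_q(\bc)$, combined with multiplicative estimates for the twisted Gauss sums, summing over $q$ in the delta-method range $q\ll (B/|A|^{1/2})^{1/2}\cdot\text{(something)}$; (v) assemble the pieces, track all exponents, and optimise to get the $B^{41/42+\eps}\Delta^{1/3}|A|^{-11/24}$ bound; (vi) remove the smooth weight by a standard comparison of the sharp box with smooth majorants/minorants, absorbing the loss into the error term (using that the boundary contribution is lower order).

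**The main obstacle** I expect is step (iii)–(v): obtaining clean, fully explicit bounds for the Gauss sums $S_q(\bc)$ attached to $\sum a_i x_i^2$ with the correct power of $\Delta$, and then balancing the $q$-sum so that the contribution of large $\gcd(q,a_i)$ — which is where the form degenerates modulo $q$ — does not overwhelm the saving. One has to be careful that the factor $\Delta^{1/3}$ (rather than, say, $\Delta^{1/2}$) is genuinely what comes out, which requires splitting the $q$-sum according to $\gcd(q,A)$ and treating the nearly-degenerate moduli by hand. The archimedean integral estimates and the weight-removal are comparatively routine; the arithmetic of the quadratic Gauss sums, uniformly in the coefficients, is the crux.
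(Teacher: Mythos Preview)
Your overall strategy matches the paper's: apply Heath-Brown's delta method with a smooth weight, isolate the $\bc = \0$ term as the main term, bound the $\bc \neq \0$ contribution, then remove the weight. However, there are two substantial gaps in your plan.

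First, you do not address the case where the dual form $F^*_{\ba}(\bc) = \sum_i (A/a_i) c_i^2$ vanishes for some $\bc \neq \0$. The standard estimate for $\sum_{q\le x} q^{-3} S_{q,\ba}(\bc)$ (Lemma~4.7 of Browning--Heath-Brown) genuinely requires $F^*_{\ba}(\bc) \neq 0$; when the dual form vanishes one must fall back on a different bound, obtained in the paper via the Burgess bound for the quadratic character $\bigl(\tfrac{A}{\cdot}\bigr)$ and yielding $\ll |A|^{3/16+\eps}\Delta_{\bc}(\ba)^{3/8} x^{1/2+\eps}$. Accordingly the paper splits the $\bc \neq \0$ error into $E_{\ba,1}(B)$ (dual form nonzero) and $E_{\ba,2}(B)$ (dual form zero), and it is the latter that drives the final error. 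The factor $\Delta^{1/3}$ is not produced by ``nearly-degenerate moduli'' in the $q$-sum as you suggest; it emerges from introducing a cut-off $M$ in the $q$-sum for $E_{\ba,2}$, bounding the $\bc$-sum over the zero locus of $F^*_{\ba}$ by $\eta^{-3}B^\eps \Delta^{1/2}$ (a separate lemma exploiting the divisibility constraints $n_ik_i\mid c_i$ forced by $F^*_{\ba}(\bc)=0$), and then choosing $M$ to balance an error $\ll \eta^{-7}B^{1/2+\eps}M$ against one $\ll \eta^{-4}B^{1+\eps}\Delta^{1/2}M^{-1/2}|A|^{-5/16}$.

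Second, your weight-removal step is not routine here. The smoothing parameter $\eta$ must be tracked explicitly through every estimate: the oscillatory-integral bounds carry powers of $\eta^{-1}$ (up to $\eta^{-7}$), so the smooth-weight version of the theorem has error terms of the shape $\eta^{-6}B^{5/6+\eps}\Delta^{1/3}|A|^{-5/24}$ and $\eta^{-7}B^{1/2+\eps}$, while removing the weight costs a factor $O(\eta)$ in the main term. The exponent $41/42$ comes precisely from choosing $\eta \asymp B^{-1/42}|A|^{1/24}$ to balance these, together with the singular-series bound $\mathfrak{G}_{\ba} \ll |A|^\eps \Delta^{1/4}$. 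The hypothesis $|A| \le B^{4/7}$ is exactly what ensures this $\eta$ lies in $(0,1/4)$ and that the second error term is dominated by the first. Treating the weight removal as ``standard'' leaves you with no mechanism to produce the stated exponents.
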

To complete the proof of Theorem \ref{the main theorem}, in Section \ref{section proof of theorem 1.2}, we apply Theorem \ref{main circle method result} together with an inclusion-exclusion argument in order to reinsert the coprimality condition and obtain an estimate for $N_{\by}(B)$. Returning to (\ref{fibre sum}), we can take a sum over these estimates for $N_{\by}(B)$ in the range $\max_i|y_i|\leq D$, where $D$ is a small power of $B$. The contribution from the remaining range $\max_{i}|y_i|>D$ is studied in Section \ref{section dealing with the large coefficients} using an elementary argument, and forms part of the error term in Theorem \ref{the main theorem}.

It is important in the proof of Theorem \ref{the main theorem} that the dependence of the estimate in Theorem \ref{main circle method result} on the coefficients $a_1,\ldots, a_4$ is made completely explicit.
 Several authors have obtained estimates of this type for quadratic forms. In \cite[Theorem 2]{browning2007density}, Browning applies the machinery from \cite{MR1421949} to find such an estimate for the counting problem corresponding to $N_{\ba}(B)$, but in $n\geq 5$ variables. Subsequently in \cite[Theorem 4.1]{browningbundle}, Browning and Heath-Brown carried this out in four variables. This latter result is nearly sufficient for our purposes, but Theorem \ref{main circle method result} represents a refinement in which we drop the assumptions made in \cite[Theorem 4.1]{browningbundle} that $A$ is nearly square-free and all the coefficients $a_1,\ldots, a_4$ are roughly the same size. Finally, we mention that using other techniques from the geometry of numbers, Comtat provides in \cite[Theorem 1.2]{comtat2019uniform} a completely uniform estimate for the number of zeros $(x_1,\ldots,x_n)\in \Z^n_{\prim}$  of a non-singular quadratic form $Q$ in $n\geq 3$ variables which lie in an arbitrary box $|x_i|\leq B_i$ for $i\in \{1,\ldots, n\}$. However, the resulting bound $N_{\ba}(B) \leq B/|A|^{1/4}$ does not have a good enough dependence on $A$ to be useful for our purposes. 

\subsection*{Notation} We take $\N = \Z_{\geq 1}$. We denote by $(\Z_{\neq 0})^n_{\prim}$ the set of vectors $(a_1,\ldots, a_n)$ such that $a_1,\ldots, a_n$ are nonzero integers and $\gcd(a_1,\ldots, a_n)=1$. For a prime $p$, we let $\nu_p$ denote the $p$-adic valuation. We write $e(\cdot)$ for the function $e^{2\pi i (\cdot)}$ and $e_q(\cdot)$ for the function $e^{2 \pi i (\cdot)/q}$. We use boldface letters to denote vectors with four components, for example $\bz = (z_1, \ldots, z_4)$. For a vector $\bv$, we define $|\bv| = \max_{1\leq i \leq 4}|v_i|$. All implied constants will be allowed to depend on $\eps$, but nothing else unless otherwise stated. Moreover, $\eps$ will denote a small positive number which for convenience we allow to take different values at different points in the argument. 
\newline
\hfill \break
\nid \textbf{Acknowledgements.} The author is grateful to Tim Browning for suggesting this project and for helpful feedback and guidance during the development of this work.

\section{The Campana--Manin conjecture}

In this section we recall from \cite[Section 3]{pieropan2019campana} the definition of Campana points and the statement of the Campana--Manin conjecture. We demonstrate that the set $\mc{T}$ removed in the definition of $N(B)$ is a thin set of Campana points, and that the power of $B$ and $\log B$ in Theorem \ref{the main theorem} is consistent with the prediction from the Campana--Manin conjecture. 

\begin{definition} Let $F$ be a field. A \textit{Campana orbifold} is a pair $(X,D)$, where $X$ is a smooth variety over $F$ and 
$$D = \sum_{\alpha \in \mathcal{A}}\eps_{\alpha}D_{\alpha}$$
is an effective Weil $\Q$-divisor of $X$ over $F$ (where the $D_{\alpha}$ are prime divisors) such that
\begin{enumerate}
    \item For all $\alpha \in \mathcal{A}$, either $\eps_{\alpha} =1$ or $\eps_{\alpha}$ takes the form $1-1/m_{\alpha}$ for some $m_{\alpha}\in \Z_{\geq 2}$.
\item The support $D_{\textrm{red}}=\sum_{\alpha \in \mathcal{A}}D_{\alpha}$ of $D$ has strict normal crossings on $X$.
\end{enumerate}
We say that a Campana orbifold is \textit{klt} if $\eps_{\alpha}\neq 1$ for all $\alpha \in \mathcal{A}$.
\end{definition}

Let $(X,D)$ be a Campana orbifold. Campana points will be defined as points $P\in X(F)$ satisfying certain conditions. These conditions are dependent on a finite set $S$ of places of $F$ containing all archimedean places, and a choice of \textit{good integral model} of $(X,D)$ over $\OO_{F,S}$. This model is defined to be a pair $(\mathcal{X},\mathcal{D})$, where $\mathcal{X}$ is a flat, proper model of $X$ over $\OO_{F,S}$, with $\mathcal{X}$ regular, and 
$$\mathcal{D} = \sum_{\alpha \in \mathcal{A}}\eps_{\alpha}\mathcal{D}_{\alpha},$$
where $\mathcal{D}_{\alpha}$ denotes the Zariski closure of $D_{\alpha}$ in $\mathcal{X}$. 

\begin{definition} Let $P \in (X\bs D_{\textrm{red}})(F)$. For a place $v \notin S$, let $\mathcal{P}_v$ denote the induced point in $\mathcal{X}(\OO_v)$ obtained via the valuative criterion for properness as stated in \cite[Theorem II.4.7]{hartshorne1977algebraic}. For $\alpha \in \mc{A}$, we define the \textit{intersection multiplicity} $n_v(\mathcal{D}_{\alpha},P)$ of $\mc{D}_{\al}$ and $P$ at $v$ to be the colength of the ideal $\mc{P}_v^{*}\mc{D}_{\al}$ in $\OO_v$. Then the \textit{intersection number} of $P$ and $\mathcal{D}$ at $v$ is defined to be 
$$n_v(\mathcal{D},P) = \sum_{\alpha \in \mc{A}}\eps_{\alpha}n_v(\mathcal{D}_{\alpha},P).$$
\end{definition}

\begin{definition} Let $(X,D)$ be a Campana orbifold with a good integral model $(\mc{X}, \mc{D})$ over $\OO_{F,S}$. A point $P \in (X\bs D_{\textrm{red}})(F)$ is a \textit{Campana} $\OO_{F,S}$\textit{-point} of $(\mc{X},\mc{D})$ if for all $v \notin S$ and all $\alpha \in \mc{A}$, we have
\begin{enumerate}
    \item If $\eps_{\alpha} = 1$, then $n_v(\mc{D}_{\al},P) =0$.
    \item If $\eps_{\al} \neq 1$, so that $\eps_{\al} = 1-1/m_{\al}$ for some $m_{\al} \in \Z_{\geq 2}$, then either $n_v(\mc{D}_{\al},P) =0$ or $n_v(\mc{D}_{\al},P)\geq m_{\al}$.
\end{enumerate}
We denote the set of Campana $\OO_{F,S}$-points of $(\mc{X},\mc{D})$ by $(\mc{X},\mc{D})(\OO_{F,S})$. 
\end{definition}

\example\label{example of campana orbifolds} Campana points are related to $m$-full values of polynomials. We consider a smooth projective variety $X\subseteq \PP^n$ over $\Q$, and a divisor
\begin{equation}\label{a concrete example}
D = \sum_{i=1}^k \left(1-\frac{1}{m_i}\right)D_i,
\end{equation}
where $m_i \geq 2$ are integers, and $D_i$ are prime divisors on $X$ cut out by polynomial equations $f_i=0$. Choosing the obvious good integral model $(\mc{X},\mc{D})$, a rational point $z \in (X\bs\bigcup_{i=1}^k D_i)(\Q)$, represented by $(z_0,\ldots, z_n)\in \Z^{n+1}_{\prim}$ is a Campana $\Z$-point of $(\mc{X},\mc{D})$ if and only if $f_i(z_0, \ldots, z_n)$ is $m_i$-full for all $i\in \{0,\ldots,k\}$.

\begin{definition}\label{what are campana thin sets} For an irreducible variety $X$ over $F$, a subset $A\subseteq X(F)$ is \textit{type I} if $A$ is a proper closed subvariety of $X$, and \textit{type II} if it can be written in the form $A=\varphi(V(F))$, where $V$ is an integral projective variety with $\dim(V) = \dim(X)$ and $\varphi\colon V \ra X $ is a generically surjective morphism of degree at least 2. A \textit{thin} set of $X(F)$ is a subset of a finite union of type I and type II sets. In \cite[Definition 3.7]{pieropan2019campana}, a thin set of Campana $\OO_{F,S}$-points is defined to be the intersection of a thin set of $X(F)$ with the set of Campana points $(\mathcal{X}, \mathcal{D})(\OO_{F,S})$.
\end{definition}

We now come to the statement of the Campana--Manin conjecture given in \cite[Conjecture 1.1]{pieropan2019campana}. Let $K$ be a number field, and let $(X,D)$ be a Campana orbifold over $K$ with a good integral model $(\mc{X},\mc{D})$ over $\OO_{K,S}$. Let $(\mc{L},\|\cdot\|)$ be an adelically metrized big and nef line bundle on $X$ and $[L]$ the associated divisor class. We let $H_{\mc{L}}\colon X(K) \ra \R_{\geq 0}$ denote the corresponding height function, as defined in \cite[Section 1]{peyre1995hauteurs}.
We recall that the \textit{effective cone} $\Lambda_{\op{eff}}$ of a variety $X$ is defined as
$$\Lambda_{\op{eff}} = \{[D]\in \op{Pic}(X): [D]\geq 0\}\tensor_{\Z}\R.$$

\begin{definition}\label{exponents a and b} Let $[K_X]$ denote the canonical divisor class. Given the above data, we define
$$ a = \inf\{t \in \R:  t[L]+ [K_X] + [D] \in \Lambda_{\op{eff}}\},$$
and we define $b$ to be the codimension of the minimal supported face of $\Lambda_{\op{eff}}$ which contains  $a[L] + [K_X] + [D]$. 
\end{definition}

\begin{conjecture}[Pieropan, Smeets, Tanimoto, V\'{a}rilly-Alvarado]\label{campana manin conjecture} Suppose that $(X,D)$ is a klt Campana orbifold, such that $-(K_X + D)$ is ample (in this case we say that the orbifold is \textit{Fano}). Assume that the set of Campana points $(\mc{X},\mc{D})(\OO_{K,S})$ is not itself thin. Then there is a thin set $\mc{T}$ of Campana $\OO_{K,S}$-points such that
$$\#\{P \in (\mc{X},\mc{D})(\OO_{K,S})\bs \mc{T}: H_{\mc{L}}(P) \leq B\} \sim cB^a(\log B)^{b-1},$$
as $B \ra \infty$, where $a,b$ are as in Definition \ref{exponents a and b}, and $c>0$ is an explicit constant described in \cite[Section 3.3]{pieropan2019campana}.
\end{conjecture}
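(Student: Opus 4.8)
This statement is a conjecture and, in full generality, open; at present no method is known that would settle it for an arbitrary klt Fano Campana orbifold. So the realistic goal is to verify it --- or at least its predicted order of growth $cB^{a}(\log B)^{b-1}$ --- in families of examples, and I sketch the two routes that have been carried out, with emphasis on the instance underlying Theorem \ref{the main theorem}.

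The first route, due to Chambert-Loir and Tschinkel and used by the authors of \cite{pieropan2019campana} themselves, applies when $X$ is a (bi)equivariant compactification of an algebraic group $G$ --- a vector group, or the Heisenberg group as in \cite{xiao2020campana} --- and $D$ is supported on the boundary. The plan there is to form the height zeta function $Z(s) = \sum_{P} H_{\mc{L}}(P)^{-s}$ over Campana points, to realise it as an integral over $G(\A)$ of an automorphic-type kernel adapted to the orbifold, so that the local conditions at $v \notin S$ in the definition of a Campana point are built into the local factors, and then to use Poisson summation together with the Fourier expansion of the kernel to continue $Z(s)$ meromorphically past its rightmost pole. One reads off $a$ from the location of that pole and $b$ from its order, a Tauberian theorem converts this into the asymptotic, and the leading constant emerges as a product of local integrals which one then matches with the Peyre-type prediction of \cite[Section 3.3]{pieropan2019campana}. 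The obstruction to going further is precisely the absence of a group action: without it there is no harmonic-analytic handle on the counting function.

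The second route --- the one this paper pursues --- is the circle method, applicable to orbifolds as in the Example above, where being a Campana point amounts to an $m$-fullness constraint on explicit polynomial values. For the case at hand, $(X,D) = (\PP^{2}, \tfrac{1}{2}D_1 + \tfrac{1}{2}D_2 + \tfrac{1}{2}D_3 + \tfrac{1}{2}D_4)$ with the $D_i$ four lines in general position, I would first compute that $K_X + D$ equals $-H$ in the Picard group, so that the orbifold is Fano with log-anticanonical class $H$, and check that $a = 1$ and $b = 1$; the resulting prediction $cB$ then matches Theorem \ref{the main theorem}. To establish the asymptotic I would write each squareful $z_i$ as $y_i^{3}x_i^{2}$ with $y_i$ squarefree, so that the Campana points of height at most $B$ fibre over the choices of $\by$ as in (\ref{fibre sum}), each fibre being the set of integral points of bounded height on a quaternary quadric $F_{\by^{3}}(\bx) = 0$; then apply the uniform estimate of Theorem \ref{main circle method result}, whose explicit dependence on the coefficients is exactly what makes the sum over $\by$ converge; sum the main terms over $\by$ in a box $|\by| \leq D$ for a small power $D$ of $B$, bound the tail $|\by| > D$ by an elementary argument, and run an inclusion--exclusion to reinstate the coprimality condition, arriving at $N(B) = cB + O(B^{1-\delta})$.

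The hard part is twofold. Conjecturally, there is simply no uniform mechanism: outside equivariant compactifications and the handful of diagonal examples amenable to the circle method, even the exponents $a$ and $b$ are hard to confirm, and the case $k = 3$ --- Poonen's question, the orbifold $(\PP^{1}, \tfrac{1}{2}[0] + \tfrac{1}{2}[1] + \tfrac{1}{2}[\infty])$ --- remains out of reach. Within the tractable instances, the delicate points are, first, isolating the correct thin set to remove --- here $\mc{T} = \{y_1 \cdots y_4 = \square\}$, on which the fibral quadrics acquire Picard rank $2$ and contribute an extra factor of $\log B$ --- and, second, the leading constant: as \cite{leadingconstant2021} shows, the $c$ produced by this procedure need not coincide with the conjectured Peyre-type constant unless still further thin sets are removed, so even a clean count does not by itself confirm the value of $c$ predicted by Conjecture \ref{campana manin conjecture}.
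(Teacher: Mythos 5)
This statement is a conjecture of Pieropan, Smeets, Tanimoto and V\'arilly-Alvarado; the paper states it without proof, and you are right that it is open in general. Your account of its status and of the two known verification routes is accurate, and your sketch of the special case $(\PP^2,\tfrac12 D_1+\cdots+\tfrac12 D_4)$ --- fibring over $\by$, applying the uniform quadric count of Theorem \ref{main circle method result}, handling the tail $|\by|>D$ and the coprimality by inclusion--exclusion, and flagging the thin set $\mc{T}$ and the leading-constant discrepancy --- matches exactly what the paper does in Sections 2--5.
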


\begin{remark}The hypothesis that the Campana points themselves are not thin is discussed by Nakahara and Streeter in \cite{samthincampana}. The authors establish in \cite[Theorem 1.1]{samthincampana} a connection between thin sets of Campana points and weak approximation, in the spirit of Serre's arguments in \cite[Theorem 3.5.7]{serretopicsgalois}. Combining this with \cite[Corollary 1.4]{samthincampana}, it can be shown that this hypothesis holds for the orbifold we consider below. 
\end{remark}
We now put the counting problem $N(B)$ from (\ref{main counting problem}) into the context of Conjecture \ref{campana manin conjecture}. The Campana orbifold under consideration is a special case of Example (\ref{example of campana orbifolds}). We consider the variety $X=\Proj^{2}$ over $\Q$. Below we will use $z_1,z_2,z_3$ to denote a representative of the point $[z_1:z_2:z_3] \in \PP^2(\Q)$ such that $(z_1,z_2,z_3) \in \Z^3_{\prim}$. Let $D$ be the divisor on $X$ given by
$$D = \sum_{i=1}^{4}\frac{1}{2}D_i,$$
where
$$D_i = \begin{cases} \{z_i = 0\},&\textrm{if } 1\leq i \leq 3,\\
\{z_1+z_2+z_3 = 0\},&\textrm{if } i=4.
\end{cases}
$$
The support of $D$ has strict normal crossings on $\Proj^2$, and so $(\Proj^2,D)$ is a Campana orbifold. Let $(\mathcal{X}, \mathcal{D})$ denote the obvious smooth proper model of $(\Proj^2,D)$ over $\Z$. Here we have chosen the set of bad places $S$ to consist only of the archimedean place $|\cdot |_{\infty}$. 

Let $z \in (\Proj^2 \bs D_{\textrm{red}})(\Q)$. The intersection multiplicity of $z$ and $D_i$ at a prime $p$ is then the $p$-adic valuation $\nu_p(z_i)$, when $i \in \{1, 2, 3\}$, and $\nu_p(z_1+ z_2+z_3)$ when $i=4$. The condition for a point $z \in (X \bs D_{\textrm{red}})(\Q)$ to be Campana $\Z$-point of $(\mc{X},\mc{D})$ is therefore that $z_1, z_2,z_3$ and $z_1+z_2+z_3$ are all squareful. 

For convenience we introduce a new variable $z_4$ which is defined by the equation $z_4 = z_1+ z_2+z_3$. We choose the ample line bundle $\mathcal{L}=\OO_{\PP^2}(1)$, and the generating set $\{z_1,z_2,z_3,z_4\}$ for the global sections of $\mc{L}$. This choice of generating set gives rise to the height function 
\begin{align*}
H: \PP^2 &\ra \R_{\geq 0}\\
z=[z_1:z_2:z_3] &\mapsto \max(|z_1|,|z_2|,|z_3|,|z_4|).
\end{align*}
We note that $(z_1,z_2,z_3) \in \Z^3_{\prim}$ if and only if $(z_1,\ldots, z_4) \in \Z^4_{\prim}$. Therefore, recalling the definition of $\mc{N}_4(B)$ from (\ref{sum of k square-full numbers}), there is a 2-1 map
\begin{equation}\label{a complicated map phi}
\begin{split}
    \phi:\mc{N}_4(B) &\ra \{z \in (\PP^2,\mc{D})(\Z): H(z) \leq B\}\\
    (z_1, \ldots, z_4)&\mapsto [z_1:z_2:z_3].
\end{split}
\end{equation}

We now compute the constants $a$ and $b$ from Conjecture \ref{campana manin conjecture} in this example. We let $[L]$ denote the hyperplane class corresponding to the line bundle $\mathcal{L}$, and $[K_{\PP^2}]$ the canonical divisor class. We recall that $\Pic \Proj^2 \isom \Z$, with the isomorphism given by the degree. We have $\deg([L]) =1$, $\deg([K_{\PP^2}]) = -3$ and $\deg([D]) = 4\cdot 1/2 = 2$. Therefore
\begin{align*}
        a &= \inf\{t \in \R:  t[L] + [K_{\PP^2}] + [D] \in \Lambda_{\op{eff}}\}\\
        &= \inf\{t \in \R:  t -3 +2 \geq 0\}\\
        &= 1.
\end{align*}
The minimal supported face of $\Lambda_{\op{eff}}$ which contains $a[L] + [K_{\PP^2}] + [D] = [0]$ is $\{0\}$, which has codimension one in $\Lambda_{\op{eff}}$, so $b = 1$. Conjecture \ref{campana manin conjecture} therefore states that there is a thin set of Campana points $\mc{Z} \subseteq (\PP^2,\mc{D})(\Z)$, such that
$$ \#\{P \in (\PP^2,\mc{D})(\Z)\bs \mc{Z}: H(P) \leq B\} = cB(1+o(1))$$
for some constant $c>0$. Theorem \ref{the main theorem} states that this result holds true when $\mc{Z} = \phi(\mc{T})$, where $\mc{T}$ is defined in (\ref{the thin set}), except possibly for a different value for the leading constant. To conclude this section, we show that $\phi(\mc{T})$ is indeed thin. 

\begin{lemma}\label{thin lemma} Let $\mc{T}$ and $\phi$ be defined as in (\ref{the thin set}) and (\ref{a complicated map phi}) respectively. Then $\phi(\mc{T})\cap (\PP^2,\mc{D})(\Z)$ is a thin set of Campana points in $(\PP^2,\mc{D})(\Z)$. 
\end{lemma}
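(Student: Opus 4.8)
The plan is to exhibit $\phi(\mc{T})$ as (the image of) a type II subset of $\PP^2(\Q)$, so that its intersection with the Campana points is thin by definition. Recall that $\mc{T} = \{(z_1,\ldots,z_4)\in\Z^4_{\prim}: z_1z_2z_3z_4 = \square\}$, where $z_4 = z_1+z_2+z_3$, and that under $\phi$ a point $(z_1,\ldots,z_4)$ maps to $[z_1:z_2:z_3]\in\PP^2$. So $\phi(\mc{T})$ is exactly the set of rational points $[z_1:z_2:z_3]$ for which the integral primitive representative satisfies $z_1z_2z_3(z_1+z_2+z_3) = \square$. The natural candidate for the covering variety is the surface $V\subseteq\PP^3$ (or a suitable model thereof) defined by $w^2 = z_1z_2z_3(z_1+z_2+z_3)$, together with the projection $\varphi\colon V\to\PP^2$, $[z_1:z_2:z_3:w]\mapsto[z_1:z_2:z_3]$, which is generically 2-to-1.

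First I would check that this equation actually defines an integral projective variety: the polynomial $z_1z_2z_3(z_1+z_2+z_3)$ is not a square in $\Q[z_1,z_2,z_3]$ (it is squarefree as a product of four distinct irreducible linear forms), so $w^2 - z_1z_2z_3(z_1+z_2+z_3)$ is irreducible, hence $V$ is integral of dimension $2 = \dim\PP^2$. Next, $\varphi$ is visibly generically surjective, and it has degree $2$ since a generic fibre consists of the two points $[z_1:z_2:z_3:\pm w]$; in particular $\deg\varphi\geq 2$, as required in \cref{what are campana thin sets}. (One should take a projective model of $V$ making $\varphi$ a genuine morphism — e.g. the normalization of $\PP^2$ in the function field $\Q(\PP^2)(\sqrt{z_1z_2z_3(z_1+z_2+z_3)})$ — but the degree and generic surjectivity are insensitive to this.)

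The one genuine point to verify is the containment $\phi(\mc{T})\subseteq\varphi(V(\Q))$. Given $[z_1:z_2:z_3]\in\phi(\mc{T})$ with primitive integral representative, by definition $z_1z_2z_3(z_1+z_2+z_3) = n^2$ for some $n\in\Z$; then $[z_1:z_2:z_3:n]$ is a $\Q$-point of $V$ mapping to $[z_1:z_2:z_3]$, so $\phi(\mc{T})\subseteq\varphi(V(\Q))$. (If one uses a different projective model of $V$, one must note that such a $\Q$-point extends/lifts to the model, which is automatic for the normalization since the point already lies on the affine chart where $w^2 = z_1z_2z_3(z_1+z_2+z_3)$ and the map is finite.) Hence $\phi(\mc{T})$ is contained in a type II set, so it is thin in $\PP^2(\Q)$; intersecting with $(\PP^2,\mc{D})(\Z)$ and invoking the definition of a thin set of Campana points from \cref{what are campana thin sets} gives the claim.

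I do not anticipate a serious obstacle here; the only mild subtlety is bookkeeping around the choice of a projective model of $V$ on which $\varphi$ is an honest morphism of degree $\geq 2$, and confirming that the points coming from $\mc{T}$ genuinely lift to $V(\Q)$ — but as noted this is immediate because the defining equation $w^2 = z_1z_2z_3(z_1+z_2+z_3)$ is solved integrally by construction.
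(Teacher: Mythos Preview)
Your argument is correct and in fact more direct than the paper's. The only cosmetic issue is the notation $[z_1:z_2:z_3:n]$, which suggests an ordinary $\PP^3$; the equation $w^2=z_1z_2z_3(z_1+z_2+z_3)$ is not homogeneous there. You already flag this and propose the normalisation of $\PP^2$ in $\Q(\PP^2)(\sqrt{f})$, which works; an equally clean fix is to take $V$ as the hypersurface $w^2=z_1z_2z_3(z_1+z_2+z_3)$ in the weighted projective space $\PP(1,1,1,2)$, where it is homogeneous of degree $4$, integral (since $f$ is squarefree), and the projection to $\PP^2$ is a genuine degree-$2$ morphism (the indeterminacy point $[0:0:0:1]$ does not lie on $V$). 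One should also remark that points with $f=0$ form a type~I thin set, so that $\phi(\mc{T})\subseteq\varphi(V(\Q))\cup\{f=0\}$ suffices; this is implicit in your write-up but worth making explicit.

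By contrast, the paper takes a two-step route: it first realises $\mc{T}$ as a thin subset of $\PP^3(\Q)$ via the double cover $t^2=z_1z_2z_3z_4$ inside $\PP(2,1,1,1,1)$, and then intersects with the hyperplane $z_1+z_2+z_3=z_4$, appealing to a restriction theorem of Serre \cite[\S9.4]{serre1989lectures} (valid because the hyperplane meets the branch locus transversally) to conclude thinness in $\PP^2(\Q)$. Your approach avoids this restriction step entirely by building the double cover over $\PP^2$ from the outset, which is both shorter and requires no external input beyond the definition of a type~II set.
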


\begin{proof} 
It suffices to show that $\phi(\mc{T})$ is a thin set in $\PP^2(\Q)$. By abuse of notation we view $\mc{T}$ as a subset of $\PP^3(\Q)$ via the map $(z_1,\ldots, z_4) \mapsto [z_1:\cdots: z_4]$. We begin by showing that $\mc{T}$ is a thin subset $\PP^3(\Q)$. Consider the weighted projective space $\Proj_{\Q}(2,1,1,1,1)$ with variables $t, z_1, \ldots, z_4$ (we refer the reader to \cite{hosgood2016introduction} for the basic definitions pertaining to weighted projective spaces). We have an embedding
\begin{align*}
    \nu:\PP(2,1,1,1,1) &\hookrightarrow \PP^{10}\\
    [t:z_1:\cdots : z_4] &\mapsto [t: z_1^2:z_1z_2:\cdots :z_4^2],
\end{align*}
which on the $z_i$-variables is the Veronese embedding of degree 2. The polynomial $f(t,z_1,\ldots,z_4) = t^2-z_1z_2z_3z_4$ is weighted homogeneous of degree $4$, and so defines a subvariety $V$ of $\PP(2,1,1,1,1)$. Let $Y$ denote the image of $\nu$ and write $t,y_{11},\ldots, y_{44}$ for variables on $\PP^{10}$. Then $\nu(V)$ is a hypersurface of $Y$ defined by the equation $t^2 = y_{12}y_{34}$. From this we see that $V$ is integral, projective and of dimension 3. 

Consider the morphism $\pi: V \ra \Proj^3$ defined by $[t: z_1: \cdots z_4]\ra  [z_1: \cdots: z_4]$. This is \'{e}tale of degree 2 on the open subset $V'$ of $V$ defined by $z_1\cdots z_4 \neq 0$. The set $W\subseteq \PP^3({\Q})$ defined by the equation $z_1\cdots z_4=0$ is a type I thin set, and $\mc{T}=\pi(V'(\Q))\cup W$, so we deduce that $\mc{T}$ is thin in $\PP^3_{\Q}$. 

We can now show that $\phi(\mathcal{T})$ is thin in $\Proj^2_{\Q}$. To do this, we intersect $\Proj^3,V'$ and $W$ with the hyperplane $H$ defined by the equation $z_1+z_2+z_3= z_4$. The map $\pi$ is ramified along the set $W$, which is a union of hyperplanes $H_i$ given by $\{z_i=0\}$. Since the intersection of $H$ with the $H_i$ is smooth and transversal, it follows from \cite[Section 9.4]{serre1989lectures} that $\pi((V'(\Q)\cap H(\Q))\cup (W\cap H(\Q))$ is thin in $H(\Q)$. The image of this set under the obvious isomorphism $H \isom \PP^2$ sending $[z_1:\cdots :z_4]$ to $[z_1:z_2:z_3]$ is precisely $\phi(\mc{T})$, so we conclude that $\phi(\mathcal{T})$ is a thin set in $\PP^2_{\Q}$. 
\end{proof}

\section{Dealing with the large coefficients}\label{section dealing with the large coefficients}

Given a nonzero squareful number $z_i$, we let $x_i,y_i$ denote the unique integers such that $x_i \in \N$, $y_i$ is square-free, and $z_i=y_i^3x_i^2$. We will also use the notation $Y=y_1\cdots y_4$. For $B,D\geq 1$, we define
\begin{align*}
M(B,D) &= \#\left\{\bz \in (\Z_{\neq 0})^4_{\prim}: 
\begin{tabular}{l}
$\sum_{i=1}^4 z_i =0, z_i \textrm{ squareful for all }i,$\\
$|\bz|\leq B, |Y|\geq D$\\
\end{tabular}
\right\}.
\end{align*}
The aim of this section is to prove the following upper bound.
\begin{proposition}\label{large coeffs are o(B)}
We have $M(B,D) = O(B^{1+\eps}D^{-1/12})$. 
\end{proposition}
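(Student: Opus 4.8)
The plan is to parametrise each squareful $z_i$ as $z_i = y_i^3 x_i^2$ with $x_i\in\N$ and $y_i$ squarefree, as in (\ref{fibre sum}), so that $|Y|=|y_1y_2y_3y_4|\ge D$ and, since $|z_i|\le B$, each $|y_i|\le B^{1/3}$. Dropping coprimality only increases the count, so
\[
M(B,D)\le \frac{1}{16}\sum_{\substack{\by:\ y_i\ \mathrm{squarefree},\ |y_i|\le B^{1/3}\\ |y_1y_2y_3y_4|\ge D}} N^\circ_{\by}(B),\qquad N^\circ_{\by}(B)=\#\Big\{\bx\in\N^4:\textstyle\sum_i y_i^3x_i^2=0,\ |y_i^3x_i^2|\le B\Big\},
\]
and by symmetry I may assume $|y_1|\ge|y_2|\ge|y_3|\ge|y_4|$ (hence $|y_1|\ge|Y|^{1/4}\ge D^{1/4}$) at the cost of a factor $24$.

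I would first remove the \emph{degenerate} tuples, those with $z_i+z_j=0$ for some $i\ne j$: since $y_i,y_j$ are squarefree this forces $|y_i|=|y_j|$ and $x_i=x_j$, and then $z_k+z_l=0$ for the complementary pair too; summing over the three pairings, with $|y_iy_j|=|Y|^{1/2}\ge D^{1/2}$ and $\sum_{m\ge D^{1/2}}m^{-3/2+\eps}\ll D^{-1/4+\eps}$, their contribution is $O(B^{1+\eps}D^{-1/4})$, which is admissible. For the remaining tuples the basic per-fibre estimate comes from ``solving for one variable'': as $y_i$ is fixed, $y_i^3x_i^2=n$ has at most one solution $x_i$, so fixing $\bx$ outside a pair $\{k,l\}$ leaves $y_k^3x_k^2+y_l^3x_l^2$ equal to a fixed \emph{nonzero} integer, and the number of solutions $(x_k,x_l)$ in the box is $O_\eps(B^\eps)$ uniformly in $\by$ --- a divisor bound if $y_k^3,y_l^3$ have equal signs, and the usual Pell-type count (finitely many orbits, geometric growth in a box) if they have opposite signs. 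Letting $x_i,x_j$ run over the remaining two boxes, with $\{i,j\}$ the pair of the two largest $|y_i|$, gives $N^\circ_{\by}(B)\ll B^{1+\eps}|y_1y_2|^{-3/2}$.

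Summing this bound alone fails, because when all $|y_i|\asymp B^{1/3}$ it is only $B^\eps$ per fibre but there are $\asymp B^{4/3}$ fibres; the resolution is a dyadic splitting according to $|y_i|\asymp Y_i$ with $Y_1\ge Y_2\ge Y_3\ge Y_4$, using in each range the better of (a) the fibre bound times the $\asymp Y_1Y_2Y_3Y_4$ admissible $\by$, and (b) when the boxes $R_i\asymp(B/Y_i^3)^{1/2}$ of the largest $y$'s are short, fixing the corresponding $x_i$ so that the equation becomes $\sum c_iy_i^3=(\text{smaller terms})$ with bounded $c_i$ and counting $\by$ via $\#\{(u,v):u^3+v^3=N,\ |u|,|v|\le X\}\ll_\eps(NX)^\eps$ (from $u+v\mid N$ and $u^3+v^3=(u+v)(u^2-uv+v^2)$). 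In every range the hypothesis $|Y|\ge D$ restricts the dyadic parameters, and since the relevant series (e.g.\ $\sum_{|y|\ge Y_0}|y|^{-3/2}$) converge, one gains a fixed positive power of $D$; optimising over the finitely many shapes of range produces the exponent $1/12$.

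The hard part is exactly step (a)/(b): finding a bound for $N^\circ_{\by}(B)$, or for its sum over a dyadic range, that is strong enough \emph{simultaneously} in all size regimes --- above all in the mixed ones where a few $|y_i|$ are $\approx B^{1/3}$ while the other boxes are long, where ``solve for one variable'' overcounts by a power of $B$ and one must instead count the pairs $(\by,\bx)$ together on the resulting cubic curves and surfaces. Keeping this loss down to a power of $\log B$ while still retaining a power of $D$ is where essentially all the effort goes, and it is why the final saving is $D^{-1/12}$ rather than the $D^{-1/4}$ that the degenerate tuples alone would suggest.
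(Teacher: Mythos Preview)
Your proposal is a sketch with a genuine gap, and the paper's argument proceeds along an entirely different route.

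The gap is that your two devices (a) and (b) do not cover all dyadic ranges well enough to yield a saving of a power of the product $|Y|$. Take the balanced regime $Y_1\asymp\cdots\asymp Y_4\asymp Y$, so $|Y|\asymp Y^4$ and each $x$-box has length $X\asymp(B/Y^3)^{1/2}$. Your approach (a) gives
\[
\#\{\by\}\cdot\frac{B^{1+\eps}}{(Y_1Y_2)^{3/2}}\ \asymp\ Y^4\cdot\frac{B^{1+\eps}}{Y^3}=YB^{1+\eps},
\]
while your approach (b), fixing all $x_i$ and then two of the $y_i$ and using a Thue bound for the remaining pair, gives
\[
X^4\cdot Y^2\cdot B^\eps\ \asymp\ \frac{B^{2+\eps}}{Y^4}.
\]
The target is $B^{1+\eps}Y^{-1/3}$ (which is what $B^{1+\eps}|Y|^{-1/12}$ becomes here). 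No convex combination of $YB$ and $B^2/Y^4$ gives $B/Y^{1/3}$: solving $B^{2-\theta}Y^{5\theta-4}=B\,Y^{-1/3}$ forces $\theta=1$ and $5\theta-4=-1/3$ simultaneously, which is impossible. Concretely, for $D^{1/4}\ll Y\ll B^{3/11}$ neither (a) nor (b) beats $B^{1+\eps}$ by any power of $Y$, so the dyadic sum over these ranges alone already exceeds $B^{1+\eps}D^{-1/12}$. You acknowledge that ``this is where essentially all the effort goes'', but the missing effort is precisely the proof. A smaller issue: your justification of the cubic bound via $u+v\mid u^3+v^3$ only applies to $u^3+v^3=N$, not to $c_1u^3+c_2v^3=N$ with $c_i=x_i^2$ distinct; one would need the uniform Thue bound instead.

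The paper avoids direct fibre-by-fibre counting altogether. It writes the count as $\int_0^1\prod_{k=1}^4 S_k(\alpha)\,\mathrm d\alpha$ with $S_k(\alpha)=\sum_{x,y}e(\alpha x^2y^3)$, applies H\"older to reduce to the fourth moment $\int_0^1|S_k|^4$, and then applies Cauchy--Schwarz to $|S_k|^2$ in \emph{two different ways} (once in $x$, once in $y$) so that the fourth moment is bounded by $XY$ times the number $L'(X,Y)$ of solutions of
\[
x^2(y_1^3-y_2^3)=y^3(x_1^2-x_2^2),\qquad |x|,|x_i|\le X,\ |y|,|y_i|\le Y.
\]
This auxiliary equation is bounded by extracting $k=\gcd(x,y)$ and playing off two divisor-type estimates against each other, one good for large $k$ and one for small $k$, giving $L'(X,Y)\ll X^{1+\eps}Y^{5/3+\eps}$ and hence $N(\bm X,\bm Y)\ll (X_1\cdots X_4)^{1/2+\eps}(Y_1\cdots Y_4)^{2/3+\eps}$. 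Substituting $X_i=(B/R_i^3)^{1/2}$ yields exactly $B^{1+\eps}(R_1\cdots R_4)^{-1/12}$ uniformly in every dyadic box, and the $D^{-1/12}$ follows. The exponent $1/12$ thus arises from the balance inside the $L'$-estimate, not from optimising over regimes as you suggest.
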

The key result in the proof of Proposition \ref{large coeffs are o(B)} is the following upper bound for the quantity 
\begin{align*}
N(\bm{X},\bm{Y}) &= \#\left\{
\bx,\by \in (\Z_{\neq 0})^4: \begin{tabular}{l}$y_1,\ldots,y_4 \textrm{ square-free,} \sum_{i=1}^4 x_i^2y_i^3 = 0,$  \\
$|x_i|\leq X_i, |y_i|\leq Y_i \textrm{ for all }i$ \\
\end{tabular}
\right\}.
\end{align*}
\begin{proposition}\label{dealing with the large coeffs}We have 
$$N(\bm{X},\bm{Y}) = O((X_1\cdots X_4)^{1/2+\eps}(Y_1\cdots Y_4)^{2/3+\eps}).$$ 
\end{proposition}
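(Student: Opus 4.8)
The plan is to fix the squarefree tuple $\by=(y_1,\dots,y_4)$ and, for each such $\by$, count the $\bx$ with $|x_i|\le X_i$ lying on the quaternary quadric $F_{\by^3}(\bx)=\sum_{i=1}^4y_i^3x_i^2=0$, and then sum over $|y_i|\le Y_i$. The decisive observation is that a bound for this fibre count that is merely \emph{uniform} in $\by$ (for instance the pair‑splitting bound $\#\{\bx\}\ll_\eps(X_1\cdots X_4)^{1/2+\eps}$, obtained by writing $y_1^3x_1^2+y_2^3x_2^2=-(y_3^3x_3^2+y_4^3x_4^2)$ and using the divisor bound for representations by a binary quadratic form) loses a whole factor $Y_1\cdots Y_4$ on summing, which is too much; instead one must use that the \emph{true} size of the fibre count decays like $|y_1y_2y_3y_4|^{-3/2}$, so that the sum over $\by$ is essentially convergent.

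I would first dispose of the degenerate configurations, namely those where some $2$--$2$ subsum of the $z_i=x_i^2y_i^3$ vanishes. If $z_i+z_j=0$, comparing $p$‑adic valuations (each $\nu_p(y_\bullet)\in\{0,1\}$ forces $\nu_p(y_i)=\nu_p(y_j)$) gives $y_j=-y_i$ and $x_j=\pm x_i$; hence each such configuration contributes at most $\min(X_i,X_j)\min(Y_i,Y_j)\min(X_k,X_l)\min(Y_k,Y_l)\le(X_1\cdots X_4)^{1/2}(Y_1\cdots Y_4)^{1/2}$, which is comfortably within the claimed bound. (One checks that the only subsums that can vanish are the $2$--$2$ ones, since $z_i\neq0$.)

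For the remaining (non‑degenerate) $\by$ I would invoke, uniformly in $\by$, a circle‑method estimate for $F_{\by^3}$ with a smooth weight adapted to the box $|x_i|\le X_i$, of the kind provided by Heath‑Brown's delta method \cite{MR1421949} and its refinement in \cite{browningbundle} --- it is precisely for such applications that one wants the dependence on the coefficients to be completely explicit. This has the shape
\[
\#\{\bx\in(\Z_{\neq 0})^4: F_{\by^3}(\bx)=0,\ |x_i|\le X_i\}\ \ll_\eps\ (|y_1\cdots y_4|X_1\cdots X_4)^{\eps}\!\left(\frac{X_1\cdots X_4}{\max_i(|y_i|^3X_i^2)}+E(\by;\bm X)\right),
\]
where $E(\by;\bm X)$ is an explicit error term built from $\Delta_\by=\prod_i\gcd\!\big(y_i^3,\prod_{j\neq i}y_j^3\big)$ and a power of $\max_iX_i$ over a power of $|y_1\cdots y_4|$; for those $\by$ with $|y_1\cdots y_4|$ too large for the estimate to be valid one uses instead the trivial fact that the size and congruence constraints then leave $O(1)$ solutions, or none.

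The main term summed over $\by$ is then handled by $\max_i(|y_i|^3X_i^2)\ge\prod_i(|y_i|^3X_i^2)^{1/4}$ together with $\sum_{|y|\le Y}|y|^{-3/4}\asymp Y^{1/4}$, which gives
\[
\sum_{\by}\frac{X_1\cdots X_4}{\max_i(|y_i|^3X_i^2)}\ \le\ (X_1\cdots X_4)\prod_{i=1}^4\Big(X_i^{-1/2}\!\!\sum_{|y_i|\le Y_i}\!|y_i|^{-3/4}\Big)\ \ll\ (X_1\cdots X_4)^{1/2}(Y_1\cdots Y_4)^{1/4},
\]
which is within the target with slack to spare (the exponent $1/4<2/3$). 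The heart of the argument, and the step I expect to be the main obstacle, is to bound $\sum_{\by}(|y_1\cdots y_4|X_1\cdots X_4)^{\eps}E(\by;\bm X)$ by $(X_1\cdots X_4)^{1/2+\eps}(Y_1\cdots Y_4)^{2/3+\eps}$: one inserts the explicit form of $E$, splits the $\by$‑sum according to which of the $|y_i|$ (respectively which of the $|y_i|^3X_i^2$) is largest, replaces $\Delta_\by$ by the appropriate divisor‑type function, and evaluates the resulting sums by elementary estimates in the spirit of the large‑coefficient analyses of \cite{van2012squareful} and \cite{browning2019arithmetic}. The constraint one must respect is that the exponent on each $Y_i$ stays at $2/3$ --- which, after the substitution $X_i^2Y_i^3\le B$ in the proof of Proposition \ref{large coeffs are o(B)}, is exactly the threshold that produces the saving $D^{-1/12}$ --- and this is what forces the use of the explicit coefficient dependence in the error term rather than a crude uniform bound.
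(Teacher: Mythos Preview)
Your approach differs fundamentally from the paper's and carries a genuine gap. The paper's proof is entirely elementary and self-contained: one writes $N(\bm X,\bm Y)=\int_0^1\prod_{k=1}^4 S_k(\alpha)\,\mathrm d\alpha$, applies H\"older to reduce to the fourth moment $\int_0^1|S_k(\alpha)|^4\,\mathrm d\alpha = N(X_k,Y_k)$ (the diagonal count with all four $X$'s equal and all four $Y$'s equal), and then two applications of Cauchy--Schwarz to $|S_k|^2$ convert $N(X,Y)$ into an auxiliary count $L'(X,Y)$ of solutions to $x^2(y_1^3-y_2^3)=y^3(x_1^2-x_2^2)$. Extracting $k=\gcd(x,y)$ and interpolating between two divisor-type bounds for the resulting $L_k(X,Y)$ gives $N(X,Y)\ll X^{2+\eps}Y^{8/3+\eps}$, and H\"older then produces the proposition. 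No circle-method input is used, and no fibration over $\by$ is performed.

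Your proposal to fibre over $\by$ and apply the $\delta$-method to each quadric $F_{\by^3}$ runs into the very obstruction that Proposition~\ref{dealing with the large coeffs} exists to circumvent. The uniform circle-method estimate you invoke (in this paper, Theorem~\ref{main circle method result}) requires $|A|=|Y|^3\le B^{4/7}$, and even the underlying smooth-weight estimate (Theorem~\ref{the main smooth weight result}) carries error terms of shape $\eta^{-6}B^{5/6}\Delta^{1/3}/|A|^{5/24}$ and $\eta^{-7}B^{1/2}$ whose usefulness degrades precisely as $|Y|$ grows; Proposition~\ref{dealing with the large coeffs} is what handles the complementary range and feeds into Proposition~\ref{large coeffs are o(B)}. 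Your escape clause --- that for $|Y|$ ``too large'' the size and congruence constraints force $O(1)$ solutions --- is not correct: nothing in the definition of $N(\bm X,\bm Y)$ forces the fibre over $\by$ to be essentially empty once the $\delta$-method loses traction (there are no congruence constraints in this count, and for general $\bm X$ the fibres can be large). Finally, you yourself flag the summation of $E(\by;\bm X)$ over $\by$ as the ``main obstacle'' without specifying $E$ or carrying the sum out; since that is the crux --- and since it would have to succeed exactly in the regime the paper's own circle-method cannot reach --- the proposal is a sketch of a harder alternative route rather than a proof.
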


We explain how to deduce Proposition \ref{large coeffs are o(B)} from Proposition \ref{dealing with the large coeffs}. We define 
\begin{align*}
M_1(B;\bm{R}) &= \#\left\{\bz \in (\Z_{\neq 0})^4: 
\begin{tabular}{l}
$\sum_{i=1}^4 z_i =0, z_i \textrm{ squareful for all }i,$\\
$|z_i|\leq B, R_i \leq |y_i| < 2R_i \textrm{ for all }i$\\
\end{tabular}
\right\}.
\end{align*}
Then
\begin{equation}\label{dyadic sum}
M(B,D) \ll \sum_{\substack{\bm{R} \textrm{ dyadic }\\R_1\cdots R_4 \geq D}}M_1(B;\bm{R}),
\end{equation}
We observe that the conditions $|\bz|\leq B$ and $R_i \leq |y_i|$ imply that $|x_i|^2\leq B/R_i^3$. Consequently,
$$ M_1(B;\bm{R}) \leq N\left(\left(\sqrt{\frac{B}{R_1^3}}, \ldots, \sqrt{\frac{B}{R_4^3}}\right),\left(2R_1, \ldots, 2R_4\right)\right).$$
Applying Proposition \ref{dealing with the large coeffs}, we obtain 
$$M_1(B;\bm{R}) \ll B^{1+\eps}(R_1\cdots R_4)^{-1/12}.$$
We conclude from (\ref{dyadic sum}) that 
\begin{align*}
    M(B,D)&\ll B^{1+\eps}\sum_{\substack{\bm{R} \textrm{ dyadic }\\ R_1\cdots R_4 \geq D}}(R_1\cdots R_4)^{-1/12}\ll B^{1+\eps}D^{-1/12},
\end{align*}
as claimed in Proposition \ref{large coeffs are o(B)}.

\begin{proof}[Proof of Proposition \ref{dealing with the large coeffs}]
For $k\ \in \{1,\ldots, 4\}$, we define
$$S_k(\alpha) = \sum_{\substack{x_k \in \Z_{\neq 0}\\|x_k|\leq X_k}}\sum_{\substack{y_k \in \Z_{\neq 0}\\|y_k|\leq Y_k\\y_k \textrm{ square-free}}}e(x_k^2y_k^3).$$
Then
$$N(\bm{X},\bm{Y}) = \int_{0}^1 \prod_{k=1}^4 S_k(\alpha) \mathrm{d}\alpha.$$
By H\"{o}lder's inequality, we have 
\begin{equation}\label{Holder ineq}
N(\bm{X},\bm{Y}) \leq \left(\prod_{k=1}^4 \int_{0}^1 |S_k(\alpha)|^4 \mathrm{d}\alpha\right)^{1/4}.\end{equation}
We fix $k \in \{1,\ldots, 4\}$, and to ease notation we write $X_k=X, Y_k=Y$. Then 
\begin{equation}\label{circle methody integral}
    \int_{0}^1 |S_k(\alpha)|^4 \mathrm{d}\alpha = N(X,Y),
\end{equation}
where
$$N(X,Y)=\#\left\{\bx,\by \in (\Z_{\neq 0})^4:
\begin{tabular}{l}
$y_1,\ldots, y_4\textrm{ square-free, }$   \\
$x_1^2y_1^3+x_2^2y_2^3 = x_3^2y_3^3+x_4^2y_4^3$ \\
 $|\bx|\leq X, |\by|\leq Y$
\end{tabular}
\right\}.\\$$
In view of (\ref{Holder ineq}), the task now is to show that 
\begin{equation}\label{the old prop}
N(X,Y) = O(X^{2+\eps}Y^{8/3+\eps}).
\end{equation}
Throughout the remainder of the argument, we make repeated use of the trivial estimate for the divisor function, namely that the number of divisors of a positive integer $d$ is $O(d^{\eps})$. 

We begin by considering the trivial cases $x_i = \pm x_j$ for some $i\neq j$. Without loss of generality, suppose $i=1, j=2$. We obtain
\begin{equation}\label{trivial cases}
   x_1^2(y_1^3 + y_2^3) = -(x_3^2y_3^3 + x_4^2 y_4^3). 
\end{equation}
If both sides of (\ref{trivial cases}) are zero, then  $y_1 = -y_2$, since we are assuming $x_1\neq 0$. Hence there are $O(XY)$ choices for $x_1,y_1,y_2$. On the right hand side, since $y_3, y_4$ are square-free, it follows that $x_3 = x_4$ and $y_3 = -y_4$, hence there are $O(XY)$ choices for $x_3,x_4,y_3,y_4$. This gives a total of $O(X^2Y^2)$ solutions. If both sides of the equation are nonzero, then there are $O(X^2Y^2)$ choices for $x_3,x_4,y_3,y_4$, and for any such choice, there are $O(X^{\eps}Y^{\eps})$ choices for $x_1,y_1,y_2$ by the trivial estimate for the divisor function. Overall, in the case $x_i = \pm x_j$, we conclude that there are $O(X^{2+\eps}Y^{2+\eps})$ solutions, which is satisfactory for establishing (\ref{the old prop}). From now on we assume $x_i \neq \pm x_j$ for all $i\neq j$. 

Returning to the integral representation of $N(X,Y)$ from (\ref{circle methody integral}), we can apply the Cauchy--Schwarz inequality in two different ways to $|S_k(\alpha)|^2$.  This gives the inequalities 
\begin{align*}
&|S_k(\alpha)|^2 \leq X\sum_{|x|\leq X}\sum_{\substack{|y_1|,|y_2|\leq Y\\ y_1, y_2 \textrm{ square-free }}}e(\alpha x^2(y_1^3-y_2^3)),\\
&|S_k(\alpha)|^2 \leq Y \sum_{\substack{|y|\leq Y\\y \textrm{ square-free }}}\sum_{|x_1|,|x_2|\leq X}e(\alpha y^3(x_1^2-x_2^2)).
\end{align*}
Applying these inequalities once each to $|S_k(\alpha)|^4$, we obtain 
\begin{equation}
\label{L(X,Y)}N(X,Y) \leq XYL'(X,Y),
\end{equation}
where
\begin{align*}
L'(X,Y) &= \#\left\{
\begin{tabular}{l}
$(x,x_1,x_2,y,y_1,y_2)\in (\Z_{\neq 0})^6:$  \\
$|x|,|x_1|,|x_2|\leq X,$ \\
$|y|,|y_1|,|y_2|\leq Y\textrm{ and }y,y_1,y_2 \textrm{ square-free, }$  \\
$x^2(y_1^3-y_2^3) = y^3(x_1^2-x_2^2).$
\end{tabular}
\right\}.
\end{align*}

It will be convenient to work with the quantity $L(X,Y)$ defined to be $L'(X,Y)$ but with the additional assumption $U/2<|x|\leq U$, which we denote by $|x|\sim U$. We will then perform a sum over dyadic intervals for $U\leq 2X$ at the end of the argument. We also need to extract from $L(X,Y)$ the greatest common divisor $k=\gcd(x,y)$. Note that since $y$ is square-free, so is $k$. Hence
$$L(X,Y) = \sum_{k\leq \min(U,Y)}\mu^2(k)L_{k}(X,Y),$$
where
\begin{align*}
L_{k}(X,Y) &= \#\left\{
\begin{tabular}{l}
$(x,x_1,x_2,y,y_1,y_2)\in (\Z_{\neq 0})^6:$  \\
$|x|\sim U/k,  |x_1|,|x_2|\leq X,$ \\
$|y|\leq Y/k,|y_1|,|y_2|\leq Y, \textrm{ and }y,y_1,y_2 \textrm{ square-free, }$  \\
$(x,y) = 1,$\\
$x^2(y_1^3-y_2^3) = ky^3(x_1^2-x_2^2).$
\end{tabular}
\right\}.
\end{align*}

We now obtain two different estimates for $L_k(X,Y)$, depending on the size of $k$. Our first estimate is suitable for large values of $k$.

We have $ky^3(x_1^2-x_2^2) \neq 0$, since  $y\neq 0$, and the cases $x_1 = \pm x_2$ have already been dealt with above. Consequently, for any fixed $x,y_1,y_2,k$, there are $O((XY)^{\eps})$ choices for $y,x_1,x_2$ such that $x^2(y_1^3-y_2^3) = ky^3(x_1^2-x_2^2)$, by the trivial estimate for the divisor function. Therefore

\begin{equation}\label{large k}
L_k(X,Y) \ll (XY)^{\eps} A(k),
\end{equation}
where
$$A(k) = \#\{|x|\sim U/k, |y_1|,|y_2|\leq Y: k|x^2(y_1^3-y_2^3)\}.$$ 

Now $k|x^2(y_1^3 - y_2^3)$ implies that there exists integers $d,r$ with $dr = k$, such that $d|x^2$ and $r|(y_1^3 - y_2^3)$. Observe that since $k$ is square-free,  $d|x^2$ if and only if $d|x$. Defining
\begin{align*} 
N(r) &= \#\{y_1, y_2 \leq Y: r|(y_1^3-y_2^3)\},
\end{align*}
we therefore have 
\begin{align}\label{A(k)}
A(k) &\leq \sum_{\substack{d,r\\ dr = k}}\sum_{\substack{|x| \sim U/k \\ d|x}}N(r)\ll U\sum_{\substack{d,r\\ dr= k}}\frac{N(r)}{dk}.
\end{align}

Note that $r\leq k\leq Y$, so $N(r) \ll Y^2\rho(r)/r^2$, where we have defined 
$$\rho(r) = \#\{\eta_1, \eta_2 \Mod{r}: \eta_1^3 \equiv \eta_2^3\}.$$
It is easy to see that $\rho(r)\ll r^{1+\eps}$. Indeed, for a prime $p$ and a fixed $\eta_1 \Mod{p}$, there are at most $3$ choices for $\eta_2 \Mod{p}$ such that $\eta_1^3 \equiv \eta_2^3 \Mod{p}$, and so $\rho(p) \leq 3p$. Since $r$ is square-free and $\rho$ is multiplicative, it follows from the Chinese Remainder theorem that 
$$\rho(r)= \prod_{p|r}\rho(p) \leq \prod_{p|r}3p \ll r^{1+\eps}.$$
Applying this bound on $\rho(r)$, we conclude that $N(r) \ll Y^2/r^{1-\eps} \ll Y^{2+\eps}/r$, and hence from (\ref{large k}) and (\ref{A(k)}) we obtain 
\begin{align}
L_k(X,Y) &\ll \frac{(XY)^{\eps}U}{k}\sum_{\substack{d,r\\ dr = k}}\frac{Y^{2+\eps}}{dr}\ll\frac{X^{\eps}Y^{2+\eps}U}{k^2}\label{large k estimate}. 
\end{align}

We now find a different estimate for when $k$ is small.  If $(x,y) = 1$ and $x^2(y_1^3-y_2^3) = ky^3(x_1^2-x_2^2)$, then $x^2|k(x_1^2-x_2^2)$, so we may define the integer $u = k(x_1^2 - x_2^2)/x^2$. Note $u\neq 0$ by the assumption $x_1 \neq \pm x_2$. We have $u \ll kX^2/(U/k)^2 = k^3X^2/U^2$. Therefore 

\begin{equation}
L_{k}(X,Y) \ll \sum_{u\ll \frac{k^3X^2}{U^2}}\sum_{\substack{|x|\sim U/k\\ |x_1|,|x_2| \leq X\\ k(x_1^2-x_2^2) = ux^2}}\sum_{\substack{|y| \leq Y/k\\ |y_1|, |y_2| \leq Y \\ uy^3  = y_1^3 - y_2^3}}1.\label{Lk(X,Y) large k}
\end{equation}

For a fixed $n = uy^3$, there are $O(Y^{\eps})$ solutions $y_1, y_2$ to $y_1^3-y_2^3=n$, using the trivial estimate for the divisor function, and so the inner sum of (\ref{Lk(X,Y) large k}) is $O(Y^{1+\eps}/k)$. Similarly, for a given $|x|\sim U/k$, there are $O(X^{\eps})$ choices for $x_1,x_2$ in the middle sum. Hence
$$L_{k}(X,Y) \ll (XY)^{\eps}\sum_{u \ll \frac{k^3X^2}{U^2}}\frac{UY}{k^2}\ll \frac{kX^{2+\eps}Y^{1+\eps}}{U}.$$ 

Combining this with the estimate in (\ref{large k estimate}), we obtain
\begin{align*}
    L_k(X,Y) &\ll X^{\eps}Y^{1+\eps}\min\left(\frac{UY}{k^2},\frac{kX^2}{U}\right)\\
    &\leq X^{\eps}Y^{1+\eps}\left( \frac{UY}{k^2}\right)^{2/3}\left(\frac{kX^2}{U}\right)^{1/3}\\
    &= \frac{X^{2/3+\eps}Y^{5/3+\eps}U^{1/3}}{k}.
\end{align*}
Finally, we take a sum over $k\leq Y$ and dyadic intervals $U\leq 2X$ to obtain 
\begin{align*}
L(X,Y) &\ll X^{2/3+\eps}Y^{5/3+\eps}\sum_{\substack{U \,\rm{ dyadic}\\ U\leq 2X}}\sum_{k \leq Y}\frac{U^{1/3}}{k}\\
&\ll X^{1+\eps}Y^{5/3+\eps}.
\end{align*}
Recalling \eqref{L(X,Y)}, we have established (\ref{the old prop}), which from (\ref{Holder ineq}) gives the required bound for $N(\bm{X},\bm{Y})$.
\end{proof}

\section{The circle method}\label{section the circle method}
In this section, we will use the circle method from \cite{MR1421949} to count zeros of diagonal quadratic forms in four variables, the main goal being the proof of Theorem \ref{main circle method result}. Before proceeding with the proof, we collect together some of the notation which we will use throughout this section. Much of the notation depends on a vector $\ba = (a_1,\ldots,a_4) \in (\Z_{\neq 0})^4$, which remains fixed throughout this section. 

\begin{itemize}
    \item $F_{\ba}(\bx)$ denotes the quadratic form $\sum_{i=1}^4 a_ix_i^2.$
    \item $\bm{P} = (P_1, \ldots, P_4)$, where $P_i = \sqrt{\frac{B}{|a_i|}}$ for $i\in \{1,\ldots, 4\}$.
    \item $N_{\ba}(B) =\#\left\{\bx \in (\Z_{\neq 0})^4: F_{\ba}(\bx) = 0, |x_i| \leq P_i\right\}$, as defined in (\ref{Main counting problem, with coeffs}).
    \item $A = a_1 \cdots a_4$.
    \item $\Delta = \Delta(\ba)=\prod_{i=1}^4\gcd\left(a_i,\prod_{j\neq i}a_j\right)$.
    \item $\beps = (\eps_1, \ldots, \eps_4)\in \{\pm 1\}^4$, where $\eps_i = \op{sgn}(a_i)= a_i/|a_i|$ is the sign of $a_i$.
    \item $G(\bx)$ is the quadratic form $ \sum_{i=1}^4 \eps_i x_i^2 = F_{\beps}(\bx)$.
    \item $\eta \in (0,1/4)$ is a small real parameter depending only on $\ba$ and $B$.
    \item $w:\R^4 \ra \R_{\geq 0}$ is an infinitely differentiable smooth weight function, which has compact support and satisfies $w(\bx) = 0$ for all $|\bx| \leq \eta$.
    \item $w_1,w_2$ are particular choices of such a smooth weight function, defined explicitly in (\ref{smooth weight w1 and w2}). 
    \item $Q = B^{1/2}$, and for a positive integer $q$, we write $r = q/Q$.
    \item $\bc$ denotes a vector in $\Z^4$, and we define $\bv = (v_1,\ldots, v_4)$ by $v_i = q^{-1}P_ic_i$.
    \item $C_i = \eta^{-1}B^{\eps}|a_i|^{1/2}$ for $i\in \{1,\ldots, 4\}.$
\end{itemize}
\begin{definition} The \textit{singular integral} $\sigma_{\infty}(\beps)$ and the \textit{singular series} $\mathfrak{G}_{\ba}$ associated to $F_{\ba}$ are given respectively by the equations
\begin{align}
\label{definition of weightless singular integral}\sigma_{\infty}(\beps) &= \int_{-\infty}^{\infty}\int_{[-1,1]^4}e(-\theta G(\bx)) \,\mathrm{d}\bx \,\mathrm{d}\theta,\\
  \mathfrak{G}_{\ba} &= \sum_{q=1}^{\infty}q^{-4}\sum_{\substack{k \bmod{q}\label{singular series definition}\\ \gcd(k,q)=1}} \sum_{\mathbf{b} \bmod{q}}e_q(kF_{\ba}(\mathbf{b})).
\end{align}
\end{definition}
For convenience we record again here the statement of Theorem \ref{main circle method result}. 
\begin{theorem}\label{theorem 1.4 again}
Let $\ba \in (\Z_{\neq 0})^4$ be such that $A\neq \square$ and $|A|\leq B^{4/7}$. Then
\begin{equation}
    N_{\ba}(B)= \frac{\mathfrak{G}_{\ba}\sigma_{\infty}(\beps)B}{|A|^{1/2}}+ O\left(\frac{B^{41/42+\eps}\Delta^{1/3}}{|A|^{11/24}}\right).
\end{equation}
\end{theorem}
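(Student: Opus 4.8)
The plan is to apply the Duke--Friedlander--Iwaniec / Heath-Brown delta-method in the form of \cite{MR1421949} to the smoothly weighted count
$$
N_w(B) = \sum_{\substack{\bx \in \Z^4\\ F_{\ba}(\bx)=0}} w(\bm{P}^{-1}\bx),
$$
for a weight $w$ supported away from the coordinate hyperplanes (so that the condition $\bx\in(\Z_{\neq 0})^4$ is automatic), and then to recover the sharp-cutoff count $N_{\ba}(B)$ by a standard dyadic decomposition of the box $|x_i|\le P_i$ into translated copies of the support of $w_1,w_2$. The delta-method expands the indicator of $F_{\ba}(\bx)=0$ as a sum over moduli $q\le Q = B^{1/2}$ of Kloosterman-type exponential sums twisted by an oscillatory integral $I_q(\bc)$; carrying out the $\bx$-integral and opening the congruence gives the usual decomposition $N_w(B) = \text{main term} + \text{error}$, where the main term is the $\bc = \0$ contribution, which factors as $(\text{singular integral})\cdot(\text{singular series})$ up to the Jacobian $|A|^{-1/2}$ coming from the change of variables $x_i \mapsto P_i x_i$, yielding $\mathfrak{G}_{\ba}\sigma_\infty(\beps)B/|A|^{1/2}$.

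The substance of the proof is bounding the error term with the stated explicit dependence on $A$ and $\Delta$. First I would record the decay of the integrals $I_q(\bc)$: they are negligible unless $|c_i| \ll C_i = \eta^{-1}B^\eps|a_i|^{1/2}$, and for $\bc\neq\0$ the standard bound gives a saving of a power of $Q/q = B^{1/2}/q$ in the length of the $\bc$-sum. Next I would estimate the complete exponential sum
$$
S_q(\bc) = \sum_{\substack{k\bmod q\\ \gcd(k,q)=1}} \sum_{\bb\bmod q} e_q\bigl(kF_{\ba}(\bb) + \bc\cdot\bb\bigr),
$$
using multiplicativity in $q$ and, prime by prime, the evaluation of diagonal quadratic Gauss sums. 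This is where the quantities $A$ and $\Delta$ enter: at primes $p$ dividing some but not all coefficients $a_i$ one loses the full square-root cancellation, and the accumulated loss over all such $p$ is controlled precisely by $\Delta^{1/3}$ (after an application of H\"older or a divisor-type bound to convert the multiplicative defect into the exponent $1/3$), while the overall normalisation contributes the $|A|$-powers. The cleanest route is to prove a pointwise bound of the shape $|S_q(\bc)| \ll q^{2+\eps}\gcd(q,\text{stuff})^{1/2}$ and then sum against $|I_q(\bc)|$ over $q\le Q$ and over $\bc$ in the box $\prod_i|c_i|\le\prod_i C_i$.

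Assembling these, the $\bc\neq\0$ terms give a bound of the form $B^\eps \cdot (\text{power of }B)\cdot \Delta^{1/3}/|A|^{\star}$, and the constraint $|A|\le B^{4/7}$ is exactly what is needed to guarantee $Q = B^{1/2}$ is large enough (equivalently $P_i \ge |a_i|$-type conditions) so that the main term dominates and the optimisation of the various exponents closes at $B^{41/42+\eps}\Delta^{1/3}/|A|^{11/24}$. Finally, passing from $N_w(B)$ to $N_{\ba}(B)$ via $O(B^\eps)$ dyadic weight pieces costs only another $B^\eps$ and leaves the error exponent unchanged. The main obstacle I anticipate is the exponential sum estimate: getting the dependence on $\Delta$ down to $\Delta^{1/3}$ (rather than a cruder $\Delta^{1/2}$ or worse) and simultaneously keeping the $|A|$-exponent as strong as $11/24$ requires a careful, prime-local analysis of the Gauss sums $S_q(\bc)$ and a tight bookkeeping of which primes contribute to $\gcd(a_i,\prod_{j\neq i}a_j)$; this is precisely the point where the refinement over \cite[Theorem 4.1]{browningbundle} has to be made, since that result sidesteps the issue by assuming $A$ is nearly square-free and the $a_i$ comparable in size.
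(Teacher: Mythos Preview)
Your overall framework is right: the DFI/Heath-Brown delta method, the identification of the $\bc=\0$ term as the main term, the truncation $|c_i|\ll C_i$ from the decay of $I_q(\bc)$, and the use of multiplicativity and Gauss-sum evaluations for $S_q(\bc)$ are all exactly what the paper does. But two of your claims conceal real gaps.

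\medskip
\textbf{First gap: the smooth-to-sharp passage is not a $B^{\eps}$ loss.} You write that going from $N_w(B)$ to $N_{\ba}(B)$ ``costs only another $B^\eps$''. This is false, and in fact the exponent $41/42$ arises precisely from the cost of this step. The weights $w_1,w_2$ depend on a parameter $\eta\in(0,1/4)$; their $N$th derivatives are of size $\eta^{-N}$, and these powers propagate through every integration by parts in the bounds for $I_q(\bc)$. The smooth-weight error term comes out as $\eta^{-6}B^{5/6+\eps}\Delta^{1/3}/|A|^{5/24}$ (plus a smaller term). On the other hand, sandwiching the indicator between $w_1$ and a geometric sum of dilates of $w_2$ introduces an error of size $O(\eta)$ in the \emph{main term}, i.e.\ an error $\ll \eta\,\mathfrak{G}_{\ba}B/|A|^{1/2}\ll \eta\,\Delta^{1/4}B/|A|^{1/2-\eps}$ via the bound $\mathfrak{G}_{\ba}\ll |A|^{\eps}\Delta^{1/4}$. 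One must then \emph{optimise} in $\eta$: the choice $\eta\asymp B^{-1/42}|A|^{1/24}$ balances these two errors, yielding $B^{41/42+\eps}\Delta^{1/3}/|A|^{11/24}$. The hypothesis $|A|\le B^{4/7}$ is used here to ensure that this $\eta$ actually lies in $(0,1/4)$ and that the secondary error $\eta^{-7}B^{1/2+\eps}$ is dominated; it is not, as you suggest, a $P_i\ge |a_i|$-type size condition.

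\medskip
\textbf{Second gap: the dual-form dichotomy.} In the $\bc\ne\0$ error, the paper's key structural move --- absent from your plan --- is to split according to whether the \emph{dual form} $F^*_{\ba}(\bc)=\sum_i (A/a_i)c_i^2$ vanishes. When $F^*_{\ba}(\bc)\ne 0$ one can quote the existing bound of Browning--Heath-Brown for $\Sigma(x;\bc)=\sum_{q\le x}q^{-3}S_q(\bc)$, which already has good $q$-cancellation. When $F^*_{\ba}(\bc)=0$ that bound is unavailable; instead one (i) introduces a cutoff parameter $M$ on $q$, treating $q\le M$ by the pointwise bound $|S_q(\bc)|\ll q^3\prod_i\gcd(q,a_i,c_i)^{1/2}$, and (ii) for $q>M$ exploits the evaluation $q^{-3}S_q(\bc)=\bigl(\tfrac{A}{q}\bigr)\phi(q)/q$ for $\gcd(q,2A)=1$ together with the Burgess bound for character sums (this is where $A\ne\square$ is used). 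One must also bound the \emph{number} of $\bc$ with $F^*_{\ba}(\bc)=0$ and $|c_i|\ll C_i$, weighted by $\Delta_{\bc}(\ba)^{3/8}$; the paper proves a lemma showing this sum is $\ll \eta^{-3}B^{\eps}\Delta^{1/2}$ via divisibility constraints forced by the dual equation. Optimising the cutoff $M$ between the $q\le M$ and $q>M$ contributions is what produces the exponent $\Delta^{1/3}$: it does not come, as you conjecture, from a H\"older or divisor-type argument on the Gauss sums themselves.

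\medskip
In summary, your pointwise Gauss-sum estimate for $S_q(\bc)$ will not by itself deliver either the $41/42$ or the $\Delta^{1/3}$; both arise from two separate balancing steps (the $\eta$-optimisation and the $M$-optimisation) that your plan does not contain.
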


The circle method from \cite{MR1421949} uses of smooth weight functions $w:\R^4\ra \R_{\geq 0}$, which we will take to approximate the characteristic function on $[-1,1]^4$. We introduce a smoothly weighted version of $N_{\ba}(B)$ given by  
\begin{equation}\label{Main problem, smooth weight version}
N_{w,\ba}(B)=\sum_{\substack{\bx \in (\Z_{\neq 0 })^4\\F_{\ba}(\bx) = 0}}w(P_1^{-1}x_1, \ldots, P_4^{-1}x_4).
\end{equation}
We also introduce a \textit{weighted singular integral} 
\begin{equation}
    \sigma_{\infty}(w) = \int_{-\infty}^{\infty}\int_{\R^4}w(\bx)e(-\theta G(\bx)) \,\mathrm{d}\bx \,\mathrm{d}\theta.\label{weighted singular integral definition}\\
\end{equation}
We will need to assume that $w$ is infinitely differentiable, has compact support, and vanishes in a neighborhood of the origin ($w(\bx) = 0 \textrm{ for } |\bx| \leq \eta$). Whilst the arguments in this section can be applied for any such choice of $w$, with implied constants depending on $w$, in order to get the power saving stated in Theorem \ref{the main theorem}, we will need to keep track of the dependence on $w$ in our estimates. Therefore, for a given $\eta >0$, we fix a particular choice of smooth weight functions $w_1, w_2: \R^4 \ra \R_{\geq 0}$. We recall the \textit{standard bump function} $\psi:\R^4 \ra \R_{\geq 0}$ is defined by 
\begin{equation}\label{standard bump}
    \psi(\bx) =
    \begin{cases}
    \exp(-(1-|\bx|^2)^{-1}), &\textrm{ if }|\bx|< 1,\\
    0, &\textrm{ otherwise.}
    \end{cases}
\end{equation}
We then define $w_1, w_2$ to be
\begin{equation}\label{smooth weight w1 and w2}
\begin{split}
  w_1(\bx) &= \begin{cases} 0, &\mbox{if } |\bx|\leq \eta,\\
e\psi\left(\frac{|\bx|}{\eta}-2\right), & \mbox{if } \eta < |\bx| \leq 2\eta,\\
1, & \mbox{if } 2\eta < |\bx| \leq 1-\eta,\\
e\psi\left(\frac{|\bx|}{\eta} - \frac{1-\eta}{\eta}\right), & \mbox{if } 1-\eta < |\bx| \leq 1,\\
0, & \mbox{if } |\bx|> 1,
\end{cases}\\ \\
w_2(\bx) &= \begin{cases} 0, &\mbox{if } |\bx|\leq \eta,\\
e\psi\left(\frac{|\bx|}{\eta}-2\right), & \mbox{if } \eta < |\bx| \leq 2\eta,\\
1, & \mbox{if } 2\eta < |\bx| \leq 1,\\
e\psi\left(\frac{|\bx|}{\eta} - \frac{1}{\eta}\right), & \mbox{if } 1 < |\bx| \leq 1+\eta,\\
0, & \mbox{if } |\bx|> 1+\eta.
\end{cases}
\end{split}
\end{equation}
For any integers $j_1, \ldots, j_4 \geq 0$ with $j_1+\cdots  +j_4\leq N$, and for any $i\in\{1,2\}$, we have 
\begin{equation}\label{derivatives of smooth weights}\frac{\del^{j_1+\cdots +j_4}}{\del x_1^{j_1}\cdots \del x_4^{j_4}}w_i(\bx) \ll_N \eta^{-N}.\end{equation}

We now state a smoothly weighted version of Theorem \ref{theorem 1.4 again}. 
\begin{theorem}\label{the main smooth weight result} Suppose $\eta \in (0,1/4)$ and $\ba \in (\Z_{\neq 0})^4$. Let $w$ be one of the weights $w_1$ or $w_2$ from (\ref{smooth weight w1 and w2}). Suppose that $A\neq \square$. Then 
    \begin{equation}\label{smooth weight error terms}\begin{split}
    N_{w,\ba}(B) &= \frac{\mathfrak{G}_{\ba}\sigma_{\infty}(w)B}{|A|^{1/2}}+ O\left(\frac{\eta^{-6}B^{5/6+\eps}\Delta^{1/3}}{|A|^{5/24}}\right)+O(\eta^{-7}B^{1/2+\eps}).
    \end{split}\end{equation}
\end{theorem}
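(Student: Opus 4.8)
The plan is to follow the Duke--Friedlander--Iwaniec/Heath-Brown $\delta$-method as set up in \cite{MR1421949}, applied to the diagonal quadratic form $F_{\ba}$ in four variables, but with every estimate made explicit in $\ba$ (through $|A|$ and $\Delta$) and in the weight parameter $\eta$. First I would write
$$
N_{w,\ba}(B) = \sum_{q=1}^{\infty} \frac{1}{q}\sum_{\substack{k\bmod q\\ \gcd(k,q)=1}} \sum_{\bc\in\Z^4} S_q(k,\bc)\, I_q(\bc),
$$
where $S_q(k,\bc) = \sum_{\bb\bmod q} e_q(kF_{\ba}(\bb) + \bc\cdot\bb)$ is the complete exponential sum and $I_q(\bc)$ is an oscillatory integral involving $w$ and a weight function $h(q/Q,\cdot)$ on the real hyperplane $F_{\ba}(\bx)=0$ with $Q=B^{1/2}$. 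The diagonal structure factorises $S_q(k,\bc) = \prod_{i=1}^4 S_q(k a_i, c_i)$ into Gauss sums, so multiplicativity and the standard evaluation of quadratic Gauss sums give $|S_q(k,\bc)| \ll q^{2+\eps} \prod_i \gcd(q, a_i, c_i)^{1/2}$ or the like; careful bookkeeping of the $\gcd(q,a_i)$ factors is exactly where $\Delta$ enters. For the integral $I_q(\bc)$ I would use the decay estimates for $h$ from \cite{MR1421949} together with repeated integration by parts in $\bx$ (using \eqref{derivatives of smooth weights} to control the cost, each integration by parts costing a factor $\eta^{-1}$), which confines the effective range of $\bc$ to $|c_i| \ll C_i = \eta^{-1}B^{\eps}|a_i|^{1/2}$ and bounds $I_q(\bc)$ in terms of $q$, $B$, $|A|$ and $\eta$.

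Next I would separate the $\bc = \0$ term from the $\bc \neq \0$ terms. The $\bc=\0$ contribution, after extending the $q$-sum to infinity, produces the main term $\mathfrak{G}_{\ba}\sigma_\infty(w)B/|A|^{1/2}$ — here one identifies $\sum_q q^{-1} S_q(k,\0) I_q(\0)$ with the product of the singular series \eqref{singular series definition} and the weighted singular integral \eqref{weighted singular integral definition}, using that $I_q(\0)$ is essentially $B/|A|^{1/2}$ times a smooth cutoff and that the tail of the singular series converges absolutely (this is where $A\neq\square$ is used: it guarantees the relevant local densities/Igusa zeta factors are finite and the Euler product $\mathfrak{G}_{\ba}$ converges, avoiding the logarithmic divergence flagged in the Remark). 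The truncation error from cutting $q$ at $Q$ and from the smooth-versus-sharp discrepancy in $h$ is absorbed into the error terms. For the $\bc\neq\0$ terms I would bound them trivially after inserting the Gauss-sum bound and the integral decay: summing over $q\leq Q$, over $k$, and over $\bc$ in the box $\prod_i[-C_i,C_i]$ gives a bound of the shape $\eta^{-6}B^{5/6+\eps}\Delta^{1/3}|A|^{-5/24} + \eta^{-7}B^{1/2+\eps}$, the two terms coming from the two regimes of $q$ (or equivalently from balancing the contribution of $\bc$ in a "central" box against the rest). The exponents $5/6$, $1/3$, $5/24$ should fall out of optimising the trivial estimate, much as in \cite[Theorem 4.1]{browningbundle}, but without their simplifying hypotheses that $A$ is nearly squarefree and the $a_i$ comparable.

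The main obstacle I anticipate is the uniform treatment of the exponential sum $S_q(k,\bc)$ when $q$ shares large common factors with several of the $a_i$ simultaneously: the naive bound loses too much, and one needs the precise cancellation in the Gauss sums $S_q(ka_i, c_i)$ — including the vanishing unless $\gcd(q,a_i) \mid c_i$ and the square-root saving on the surviving modulus — to extract exactly the factor $\Delta^{1/3}$ rather than something larger like $\Delta$ or $\prod_i|a_i|^{1/2}$. A secondary difficulty is keeping the $\eta$-dependence honest: every integration by parts in the archimedean integral costs $\eta^{-1}$, and one must not integrate by parts more times than the final $\eta^{-6}$ and $\eta^{-7}$ can afford, so the number of integrations by parts has to be chosen as a function of the target savings in $B$. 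Once these two points are handled, assembling the main term and collecting the error terms is routine, and \eqref{smooth weight error terms} follows; Theorem \ref{theorem 1.4 again} is then deduced by sandwiching $N_{\ba}(B)$ between $N_{w_1,\ba}(B)$ and $N_{w_2,\ba}(B)$ and estimating $\sigma_\infty(w_i)-\sigma_\infty(\beps)$ together with the boundary contribution, choosing $\eta$ optimally in terms of $B$ and $|A|$.
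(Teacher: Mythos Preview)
Your overall architecture is right --- the Heath--Brown $\delta$-method, the split $\bc=\0$ versus $\bc\neq\0$, and the truncation to $|c_i|\ll C_i$ via repeated integration by parts --- but your treatment of the $\bc\neq\0$ contribution has a genuine gap, and your account of where $A\neq\square$ enters is not correct.

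You propose to bound the $\bc\neq\0$ terms ``trivially after inserting the Gauss-sum bound and the integral decay''. A trivial sum over $q\leq Q$ cannot reach $B^{5/6}$: the mechanism that produces the stated error is \emph{cancellation in the $q$-sum}, and this requires two ingredients you do not mention. First, the paper splits the $\bc\neq\0$ contribution according to whether the dual form $F^*_{\ba}(\bc)$ vanishes. When $F^*_{\ba}(\bc)\neq 0$ one combines a stationary-phase (second-derivative) bound for $I_{q,\ba}(\bc)$, namely Lemma~\ref{modified version of lemma 4.2 ii)}, with \cite[Lemma~4.7]{browningbundle}, which uses the non-vanishing of $F^*_{\ba}(\bc)$ to sum $q^{-3}S_{q,\ba}(\bc)$ nontrivially; this yields $\eta^{-7}B^{1/2+\eps}$. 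When $F^*_{\ba}(\bc)=0$ that route is blocked, so one uses partial summation in $q$ and Lemma~\ref{refined lemma 4.9}, which applies the Burgess bound to the character $\bigl(\tfrac{A}{\cdot}\bigr)$ to obtain $\Sigma(x;\bc)\ll |A|^{3/16+\eps}\Delta_{\bc}(\ba)^{3/8}x^{1/2+\eps}$. One must then count zeros of $F^*_{\ba}$ in the box $|c_i|\ll C_i$ with weight $\Delta_{\bc}(\ba)^{3/8}$ (this is the content of Lemma~\ref{basic property of delta} and the lemma following it), and after optimising a cutoff $M$ in the $q$-range one arrives at $\eta^{-6}B^{5/6+\eps}\Delta^{1/3}|A|^{-5/24}$. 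None of this is a ``trivial'' bound, and the dual-form dichotomy is essential.

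This also corrects your reading of the hypothesis $A\neq\square$. It is not primarily about absolute convergence of $\mathfrak{G}_{\ba}$; its real role is that it makes the Jacobi symbol $\bigl(\tfrac{A}{\cdot}\bigr)$ a non-principal character, so that Burgess gives square-root cancellation in $\Sigma(x;\bc)$. That cancellation is used both in the error analysis above and in the main-term truncation in Proposition~\ref{asymptotics for main term}; without it the $q$-sum behaves like a divergent harmonic series and one picks up the extra $\log B$ alluded to in (\ref{manin for quadrics}).
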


We explain how Theorem \ref{theorem 1.4 again} can be deduced from Theorem \ref{the main smooth weight result} by applying the methods from \cite[Section 5.3]{BHBquadricsurfaces} and \cite[Section 2.4]{MR3944106}, together with the upper bound $\mathfrak{G}_{\ba}\ll |A|^{\eps}\Delta^{1/4}$ for the singular series, which we prove in Lemma \ref{the one and only singular series estimate} at the end of this section. 

Let $1_{[-1,1]^4}$ denote the characteristic function on $[-1,1]^4$, and for an integer $j\geq 0$, define $w_2^{(j)}(\bx) = w_2((2\eta)^{-j}\bx)$. Then for any $\bx \in \R^4$, we have 
$$w_1(\bx) \leq 1_{[-1,1]^4}(\bx) \leq \sum_{j=0}^{\infty}w_2^{(j)}(\bx),$$
and consequently, 
\begin{equation*}
    N_{w_1,\ba}(B) \leq N_{\ba}(B) \leq \sum_{j=0}^{\infty}
    N_{w_2^{(j)},\ba}(B)=\sum_{j=0}^{\infty}N_{w_2,\ba}((2\eta)^jB).
\end{equation*}
The assumption $|A|\leq B^{4/7}$ in the statement of Theorem \ref{theorem 1.4 again} implies that the error term $\eta^{-7}B^{1/2+\eps}$ is dominated by the other error term in (\ref{smooth weight error terms}), provided that $\eta \gg B^{-5/14+\eps}$. For any $\eta \in (0,1/4)$ satisfying this condition, it follows from Theorem \ref{the main smooth weight result} that 
$$\sum_{j=0}^{\infty}N_{w_2,\ba}((2\eta)^jB) = \frac{(1+O(\eta))\mathfrak{G}_{\ba}\sigma_{\infty}(w_2)B}{|A|^{1/2}}+ O\left(\frac{\eta^{-6}B^{5/6+\eps}\Delta^{1/3}}{|A|^{5/24}}\right).$$
As explained in \cite[Lemma 2.9]{MR3944106}, for $i\in\{1,2\}$, we have 
$$|\sigma_{\infty}(w_i) - \sigma_{\infty}(\beps)|\ll \eta \sigma_{\infty}(\beps) \ll \eta,$$
from which we deduce that 
$$N_{\ba}(B) = \frac{(1+O(\eta))\mathfrak{G}_{\ba}\sigma_{\infty}(\beps)B}{|A|^{1/2}}+O\left(\frac{\eta^{-6}B^{5/6+\eps}\Delta^{1/3}}{|A|^{5/24}}\right).$$
We choose 
$$\eta = \frac{1}{5}B^{-1/42}|A|^{1/24}.$$
Clearly $\eta \gg B^{-5/14+\eps}$. Moreover,
using the assumption $|A|\leq B^{4/7}$, we have that $\eta \in (0,1/4)$, as was required in order to apply Theorem \ref{the main smooth weight result}. Theorem \ref{theorem 1.4 again} now follows from the estimate $\mathfrak{G}_{\ba}\ll |A|^{\eps}\Delta^{1/4}$ found in Lemma \ref{the one and only singular series estimate}.\medskip

We commence with the proof of Theorem \ref{the main smooth weight result}. It will be convenient to make the assumptions 
\begin{equation}\label{harmless assumption on eta}\eta^{-1}, |\ba| \ll  B^{R}\end{equation}
for some fixed $R \geq 0$, so that quantities bounded by an arbitrarily small power of $\eta^{-1}$ or $|A|$ are also $\ll B^{\eps}$ for any $\eps >0$. If these assumptions do not hold, then the statement of Theorem \ref{the main smooth weight result} is trivial. Employing the machinery from \cite[Theorem 2]{MR1421949}, we have
\begin{equation}\label{NwP}N_{w,\ba}(B) = \frac{C_Q}{Q^2}\sum_{\bc\in \Z^4}\sum_{q=1}^{\infty}q^{-4}S_{q,\ba}(\bc)I_{q,\ba}(\bc),\end{equation}
where $C_Q = 1+ O_N(Q^{-N})$, and $Q = B^{1/2}$. Here $I_{q,\ba}(\bc)$ and $S_{q,\ba}(\bc)$ are exponential sums and integrals defined analogously to \cite{MR1421949}, via
\begin{equation}\label{singular definitions}
\begin{split}
    S_{q,\ba}(\bc) &= \sum_{\substack{k \bmod q\\ \gcd(k,q)=1}} \sum_{\mathbf{b} \bmod q}e_q(kF_{\ba}(\mathbf{b}) + \mathbf{b}\cdot \bc),\\
    I_{q,\ba}(\bc) &= \int_{\R^4}w(P_1^{-1}x_1, \ldots, P_4^{-1}x_4)h\left(\frac{q}{Q}, \frac{F_{\ba}(\bx)}{Q^2}\right)e_q(-\bc\cdot \bx)\,\mathrm{d}\bx.
    \end{split}
\end{equation}
The smooth function $h:(0,\infty)\times \R \ra \R$ is as defined in \cite[Section 3]{MR1421949} and is given by
\begin{equation}\label{definition of h(x,y)}
h(x,y) = \sum_{j=1}^{\infty}(xj)^{-1}\left(u(xj)-u(|y|/xj)\right),
\end{equation}
where $u = c\psi(4x-3)$ is a linear transformation of the standard bump function (the one-dimensional analogue of (\ref{standard bump})), scaled by a constant $c$ so that its integral over $\R$ is equal to 1. As observed in \cite[Section 3]{MR1421949}, it is straightforward to check from (\ref{definition of h(x,y)}) that $h(x,y)\ll x^{-1}$ and $h(x,y)=0$ whenever $x> \max(1,2|y|)$.    

We note that by changing variables $P_i^{-1}x_i$ into $x_i$, we can rewrite $I_{q,\ba}(\bc)$ as 
\begin{align}
    I_{q,\ba}(\bc)&=P_1\cdots P_4\int_{\R^4}w(\bx)h\left(\frac{q}{Q}, \frac{F_{\ba}(P_1x_1, \ldots, P_4x_4)}{Q^2}\right)e(-\bv\cdot\bx)\mathrm{d}\bx\nonumber\\
    &= P_1\cdots P_4\int_{\R^4}w(\bx)h(r,G(\bx))e(-\bv\cdot\bx) \mathrm{d}\bx.\label{iqa0}
\end{align}
\subsection{The main term}

The main term for $N_{w,\ba}(B)$ comes from the case $\bc = \0$ in \eqref{NwP}. We define
\begin{equation}\label{definition of M(B)}
M(B) = \frac{1}{Q^2}\sum_{q=1}^{\infty}q^{-4}S_{q,\ba}(\0)I_{q,\ba}(\0).
\end{equation} 
We have 
\begin{equation}\label{singular series} \mathfrak{G}_{\ba} =\sum_{q=1}^{\infty}q^{-4}S_{q,\ba}(\0),\end{equation}
and from (\ref{iqa0}), 
$$I_{q,\ba}(\0) = P_1\cdots P_4\int_{\R^4}w(\bx)h\left(r, G(\bx)\right)\mathrm{d}\bx.$$

We let $w_0$ denote a smooth weight function supported on $[-17, 17]$ and taking the value 1 in $[-16, 16]$. Since $\supp(w)\subseteq [-1-\eta, 1+\eta]\subset [-2,2]$, we have $w_0(G(\bx))=1$ whenever $\bx \in \supp(w)$. We obtain
$$I_{q,\ba}(\0) = P_1\cdots P_4\int_{\R^4}w(\bx)w_0(G(\bx))h(r,G(\bx)) \mathrm{d}\bx.$$

The arguments at the beginning of \cite[Section 4.3]{browningbundle} can be applied to obtain
\begin{equation}\label{Main term J bit}
I_{q,\ba}(\0) = P_1\cdots P_4 \left( \lim_{\delta\ra 0}\int_{-\infty}^{\infty}\left(\frac{\sin(\pi \delta \theta)}{\pi \delta \theta}\right)^2J(w;\theta)L(\theta) \,\mathrm{d}\theta \right),
\end{equation}
where 
\begin{align}
    J(w;\theta) &= \int_{\R^4}w(\bx)e(-\theta G(\bx))\,\mathrm{d}\bx,\\
    L(\theta) &= \int_{-\infty}^{\infty}w_0(t)h(r,t)e(\theta t)\, \mathrm{d}t. 
\end{align}
We would like to compare the bracketed expression in (\ref{Main term J bit}) with the weighted singular integral $\sigma_{\infty}(w)$ from (\ref{weighted singular integral definition}), which by definition equals $\int_{-\infty}^{\infty}J(w;\theta)\mathrm{d}\theta$. We apply \cite[Lemma 4.11]{browningbundle} to estimate $L(\theta)$. In addition to $N$, the implied constants in \cite[Lemma 4.11]{browningbundle} depend only on $\mathrm{supp}(w_0)$. However, we note that we have chosen $w_0$ independently of $w$, and hence independently of $\eta$. Therefore for any $q \leq Q$ and $N\geq 1$, we have
\begin{equation}\label{estimate for L theta} L(\theta) = 1 + O_N(\{1+|\theta|^N\}r^N).\end{equation}

Our next task is to bound $J(w;\theta)$. In order to achieve this, we need to work with more general smooth weight functions $\widetilde{w}:\R^4\ra \R_{\geq 0}$ which belong to a class $S$ defined by the following properties.
\begin{definition}\label{class of smooth weights S}
We say $\widetilde{w}\in S$ if
\begin{enumerate}
    \item $\widetilde{w}$ is smooth,
    \item $\widetilde{w}$ is supported on $[-2,2]^4$,
    \item $\wt = 0$ for $|\bx| \leq \eta,$
    \item $|\wt|\ll 1$ for all $\bx \in \R^4$,
    \item The derivatives of $\wt$ are bounded as in (\ref{derivatives of smooth weights}).
\end{enumerate}\end{definition}
\nid Since $w$ is equal to $w_1$ or $w_2$, we note in particular that $w \in S$.
\begin{lemma}\label{estimate for J} For any $\widetilde{w}\in S$ and any $M\in \Z_{\geq 0}$, we have $J(\widetilde{w};\theta) \ll_M |\eta^2 \theta|^{-M}.$
\end{lemma}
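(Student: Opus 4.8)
The plan is to establish the decay estimate $J(\widetilde w;\theta)\ll_M |\eta^2\theta|^{-M}$ by the standard device of integration by parts: oscillatory integrals against $e(-\theta G(\bx))$ decay rapidly in $\theta$ provided the phase has no stationary points on the support of the amplitude, and here the amplitude $\widetilde w$ vanishes near the origin, which is precisely the stationary locus of $G(\bx)=\sum_i\eps_i x_i^2$. The only subtlety, and the reason the bound is phrased in terms of $\eta^2\theta$ rather than $\theta$, is that the lower cutoff $\widetilde w(\bx)=0$ for $|\bx|\leq\eta$ places us at distance only $\sim\eta$ from the stationary point, so each integration by parts costs a factor $\eta^{-2}$ (one power of $\eta^{-1}$ from $|\nabla G|^{-1}\asymp|\bx|^{-1}$ and one more from differentiating that quantity, together with the derivatives of $\widetilde w$ which are $\ll\eta^{-N}$).

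Concretely, I would introduce the first-order differential operator
\[
\mathcal{L}f = \frac{1}{-2\pi i\,\theta}\,\frac{\nabla G\cdot \nabla f}{|\nabla G|^2},
\]
so that $\mathcal{L}\big(e(-\theta G(\bx))\big) = e(-\theta G(\bx))$ on the set where $\nabla G\neq \0$. Since $\nabla G(\bx) = 2(\eps_1 x_1,\ldots,\eps_4 x_4)$, we have $|\nabla G(\bx)|^2 = 4|\bx|^2 \neq 0$ on $\supp(\widetilde w)$, where $|\bx|\geq\eta$. Writing $J(\widetilde w;\theta)=\int_{\R^4}\widetilde w(\bx)\,e(-\theta G(\bx))\,\d\bx$ and integrating by parts $M$ times, moving $\mathcal{L}$ off the exponential and onto the amplitude via its formal transpose $\mathcal{L}^{t}$, gives
\[
J(\widetilde w;\theta) = \int_{\R^4}\big((\mathcal{L}^{t})^{M}\widetilde w\big)(\bx)\,e(-\theta G(\bx))\,\d\bx;
\]
the boundary terms vanish because $\widetilde w$ is compactly supported. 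Each application of $\mathcal{L}^{t}$ produces a factor $\theta^{-1}$ and a sum of terms in which derivatives hit $\widetilde w$ (each such derivative $\ll\eta^{-N}$ by property (5) of the class $S$, i.e.\ by (\ref{derivatives of smooth weights})) or hit the rational function $x_j/|\bx|^2$ and its iterates. One checks by induction that on the region $|\bx|\geq\eta$ every term arising in $(\mathcal{L}^{t})^{M}\widetilde w$ is bounded by $\ll_M (|\theta|\,\eta^2)^{-M}$: indeed each of the $M$ steps contributes at worst $\eta^{-1}$ from the factor $1/|\bx|\leq\eta^{-1}$ and a further $\eta^{-1}$ from differentiating the coefficients $x_j/|\bx|^2$ or from the $\eta^{-1}$-scale derivatives of $\widetilde w$, for a total of $(|\theta|\eta^2)^{-M}$. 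Since $\widetilde w$ is supported in $[-2,2]^4$, the region of integration has volume $O(1)$, and therefore $J(\widetilde w;\theta)\ll_M (|\theta|\eta^2)^{-M} = |\eta^2\theta|^{-M}$.

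The main obstacle is purely bookkeeping: verifying the inductive claim that after $M$ applications of $\mathcal{L}^{t}$ the amplitude is a finite sum of terms each of the shape $\theta^{-M}$ times (a product of at most $2M$ factors, each either a derivative of $\widetilde w$ of order $\leq M$ or a function of the form $P(\bx)/|\bx|^{2k}$ with $P$ a polynomial of degree $k$ and $k\leq M$), all uniformly $\ll_M\eta^{-2M}$ on $|\bx|\geq\eta$. This is a routine but slightly tedious Leibniz-rule induction; there are no analytic difficulties, and the homogeneity of $G$ makes the power-counting clean. For small $|\theta|$ the claimed bound is trivial from $|J(\widetilde w;\theta)|\ll 1$ (taking $M=0$ or noting the estimate is only interesting once $|\eta^2\theta|\geq 1$), so no separate treatment of that range is needed.
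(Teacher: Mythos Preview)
Your proposal is correct and follows essentially the same integration-by-parts strategy as the paper: exploit that the unique stationary point of $G$ is the origin, which is excluded from $\supp(\widetilde w)$, so each integration by parts gains a factor $(\eta^2|\theta|)^{-1}$. The only organizational difference is that the paper first splits the integral into the four regions $\{|x_j|=\max_i|x_i|\}$ and integrates by parts in the single dominant variable $x_j$ (so the denominator is $x_j$ rather than the full $|\nabla G|^2$), then phrases the induction as ``the new amplitude, after extracting $(\eta^2\theta)^{-1}$, is again in $S$''; your full-gradient operator $\mathcal L$ and direct power-counting achieve the same end without the region splitting. One small point: in the paper's notation $|\bx|$ is the sup norm, so your identity $|\nabla G|^2=4|\bx|^2$ should read $|\nabla G|_2^2=4\sum_i x_i^2\geq 4|\bx|^2\geq 4\eta^2$, which is all you need.
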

\begin{proof} The case $M=0$ is trivial since the integrand in the definition of $J(\widetilde{w};\theta)$ is bounded and has compact support. For $M\geq 1$, we follow a similar argument to \cite[Lemma 10]{MR1421949}, by applying integration by parts and induction on $M$. We have 
$$J(\widetilde{w};\theta) = \int_{\R^4}\wt e(-\theta G(\bx)) \mathrm{d}\bx \ll \max_{1 \leq j \leq 4}\left|\int_{\substack{\bx \in \R^4,\\|x_j| = \max_i|x_i|}}\wt e(-\theta G(\bx)) \mathrm{d}\bx\right|.$$
Without loss of generality, we may assume that $j=1$. We note that 
$$e(-\theta G(\bx)) = \frac{1}{-4\pi i \eps_1\theta x_1}\frac{\del}{\del x_1}e(-\theta G(\bx)). $$
We perform integration by parts with respect to $x_1$ to obtain
\begin{align*}
J(\widetilde{w};\theta) \ll \left|\int_{\substack{\bx \in \supp{w}\\|x_1|= \max_i|x_i|}}\frac{\del}{\del x_1}\left(\frac{\wt}{4\pi i \theta x_1}\right)e(-\theta G(\bx))\mathrm{d}\bx\right|.
\end{align*}

Since $|x_1| = \max_i|x_i|$, we have $|x_1|^{-1}= |\bx|^{-1} \leq \eta^{-1}$ for any $\bx \in \supp{w}$. Therefore by the assumptions on the size of $\widetilde{w}$ and its derivatives, we see that 
$$\frac{\del}{\del x_1}\left(\frac{\wt}{4\pi i \theta x_1}\right) = \widetilde{w}^{(1)}(4\pi i\theta\eta^2)^{-1}$$
for some $\widetilde{w}^{(1)}\in S$. Hence 
$$J(\widetilde{w};\theta)\ll |\eta^2\theta|^{-1}|J(\widetilde{w}^{(1)};\theta)|.$$
Proceeding by induction, we obtain $J(\wt;\theta)\ll_M |\eta^2\theta|^{-M}|J(\widetilde{w}^{(M)};\theta)|$ for some $\widetilde{w}^{(M)}\in S$, from which we deduce the required result by applying the trivial bound $J(\widetilde{w}^{(M)};\theta) \ll 1$.
\end{proof}

From Lemma \ref{estimate for J}, for any integer $M\geq 1$, we have
$$J(w;\theta) \ll_M (1+|\eta^2\theta|)^{-M}\leq \eta^{-2M}(1+|\theta|)^{-M}.$$
Combining this with (\ref{Main term J bit}), (\ref{estimate for L theta}) and the fact that  
$$\lim_{\delta \ra 0}\int_{-\infty}^{\infty}\left(\frac{\sin(\pi \delta\theta)}{\pi \delta\theta}\right)^2J(\theta;w)\op{d}\theta =\int_{-\infty}^{\infty}J(\theta;w)\op{d}\theta =\sigma_{\infty}(w), $$ 
we obtain
$$|(P_1\cdots P_4)^{-1}I_{q, \ba}(\0)-\sigma_{\infty}(w)|\ll_{M,N} \int_{-\infty}^{\infty}\frac{r^N\{1+|\theta|\}^N}{\eta^{2M}\{1+|\theta|\}^{M}}\,\mathrm{d}\theta.$$ 
In order to ensure that the integral converges, we make the choice $M = N+2$, and we conclude that 
\begin{equation}\label{IQnaught}I_{q,\ba}(\0) = P_1\cdots P_4 \{\sigma_{\infty}(w) + O_N(\eta^{-2N-4}r^N)\},\end{equation}
for any integer $N\geq 1$. 

Let $R= B^{1/2-\eps}\eta^2$. We note that if $q\leq R$, then the error term in (\ref{IQnaught}) can be made smaller than any negative power of $B$ by appropriate choice of $N$ (depending on $\eps$), due to the assumption in (\ref{harmless assumption on eta}).

We now split the main term up. We have 
\begin{equation}\label{split the main term up}
\begin{split}
M(B) = \frac{P_1\cdots P_4\sigma_{\infty}(w)}{B}\sum_{q\leq R}q^{-4}S_{q,\ba}(\0)+O(T(R)+1),
\end{split}
\end{equation}
where 
\begin{equation}\label{TR}T(R) = \frac{1}{Q^2}\sum_{q>R}q^{-4}S_{q,\ba}(\0)I_{q,\ba}(\0).\end{equation} 
We will estimate $T(R)$ using partial summation. The following lemma is similar to \cite[Lemma 4.3]{browningbundle}, and gives bounds for $I_{q,\ba}(\bc)$ and its derivative. Only the case $\bc =\0$ is needed in the study of the main term, but the case $\bc \neq \0$ will be useful later.
\begin{lemma}\label{lemma 4.3*} Let $\bc \in \Z^4$ and $k\in \{0,1\}$. Then
\begin{enumerate}
\item If $k=0$ or $\bc = \0$ then $q^k\frac{\del^kI_{q,\ba}(\bc)}{\del q^k} \ll P_1\cdots P_4$,
\item If $k=1$ and $\bc \neq \0$ then $q^k\frac{\del^kI_{q,\ba}(\bc)}{\del q^k} \ll \eta^{-1}P_1\cdots P_4$.
\end{enumerate}
\end{lemma}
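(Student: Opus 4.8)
The plan is to work from the integral representation \eqref{iqa0},
\[ I_{q,\ba}(\bc)=P_1\cdots P_4\int_{\R^4}w(\bx)\,h(r,G(\bx))\,e(-\bv\cdot\bx)\,\d\bx ,\qquad r=q/Q,\quad v_i=q^{-1}P_ic_i , \]
following the proof of \cite[Lemma 4.3]{browningbundle}, and to reduce everything to the $L^1$-type estimate
\[ \int_{\supp w}\bigl|h(r,G(\bx))\bigr|\,\d\bx\ \ll\ \int_{\R}|h(r,t)|\,\d t\ \ll\ 1 , \]
with implied constants independent of $\eta$ and $\ba$. The first inequality is the coarea formula for $G$ together with the uniform bound $\int_{\{G(\bx)=t\}\cap[-2,2]^4}|\nabla G(\bx)|^{-1}\,\d\sigma(\bx)\ll 1$, which follows by a dyadic decomposition in $|\bx|$ from the fact that $G$ is a non-degenerate indefinite quadratic form (near the cone one gains a factor $|\bx|^2$ from the surface measure against $|\bx|^{-1}$ from $|\nabla G|^{-1}$, and the dyadic series converges); the second is a consequence of the estimates for Heath-Brown's function $h$ in \cite[Section 3]{MR1421949} (see also \cite[Section 4]{browningbundle}) — as a function of its second variable, $h(r,\cdot)$ is concentrated at scale $r$ with mass $\ll 1$. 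Since $|e(-\bv\cdot\bx)|=1$, taking absolute values in \eqref{iqa0} and applying this bound already gives part~(1) when $k=0$, for every $\bc\in\Z^4$.

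When $k=1$ in part~(1) we have $\bc=\0$, so only $r$ depends on $q$ and $q\,\del_q I_{q,\ba}(\0)=P_1\cdots P_4\int_{\R^4}w(\bx)\,r\,h_1(r,G(\bx))\,\d\bx$, where $h_1,h_2$ denote the partial derivatives of $h$ in its first and second arguments. Since $t\mapsto r\,h_1(r,t)$ obeys the same kind of $L^1$ bound as $h$ itself (again by \cite[Section 3]{MR1421949}), the estimate above applies verbatim and yields $q\,\del_q I_{q,\ba}(\0)\ll P_1\cdots P_4$.

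In part~(2) we have $\bc\ne\0$ and $k=1$. As $q\,\del_q\bv=-\bv$, differentiating \eqref{iqa0} produces two terms: one with $\del_q$ falling on $h(r,G(\bx))$, equal to $P_1\cdots P_4\int w(\bx)\,r\,h_1(r,G(\bx))\,e(-\bv\cdot\bx)\,\d\bx\ll P_1\cdots P_4$ exactly as above, and one with $\del_q$ falling on $e(-\bv\cdot\bx)$, equal to $2\pi i\,P_1\cdots P_4\int w(\bx)\,h(r,G(\bx))\,(\bv\cdot\bx)\,e(-\bv\cdot\bx)\,\d\bx$. For this last integral I would use the identity $(\bv\cdot\bx)\,e(-\bv\cdot\bx)=-\tfrac{1}{2\pi i}\sum_{i=1}^4 x_i\,\del_{x_i}e(-\bv\cdot\bx)$ and integrate by parts in each $x_i$ (there are no boundary contributions since $w$ is compactly supported), moving the derivative onto $x_i\,w(\bx)\,h(r,G(\bx))$. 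The product rule then splits this into three types of terms, each integrated against $e(-\bv\cdot\bx)$: a multiple of $w(\bx)h(r,G(\bx))$, bounded by the $L^1$ estimate; the term $(\bx\cdot\nabla w)(\bx)\,h(r,G(\bx))$, where $|\bx\cdot\nabla w(\bx)|\ll\eta^{-1}$ on $\supp w\subseteq[-2,2]^4$ by \eqref{derivatives of smooth weights}, so this contributes $\ll\eta^{-1}$ times the left-hand side of the $L^1$ estimate; and, using $\del_{x_i}G(\bx)=2\eps_ix_i$ together with $\sum_i\eps_ix_i^2=G(\bx)$, a multiple of $G(\bx)\,w(\bx)\,h_2(r,G(\bx))$, for which one uses that $t\mapsto t\,h_2(r,t)$ again satisfies a bound of the same shape as $h$. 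Summing the contributions gives $q\,\del_q I_{q,\ba}(\bc)\ll\eta^{-1}P_1\cdots P_4$, which is part~(2).

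I expect the main obstacle to be assembling the precise quantitative facts about $h$ that the argument needs: beyond $h\ll r^{-1}$ and the support property quoted before \eqref{iqa0}, one needs enough decay of $h(r,t)$ — and of $r\,h_1(r,t)$ and $t\,h_2(r,t)$ — in $t$ for the $L^1$ bound $\int_\R|h(r,t)|\,\d t\ll 1$ and its analogues to hold uniformly in $r$; these I would extract from \cite[Section 3]{MR1421949} (or quote via \cite[Section 4]{browningbundle}). The other point that requires care — and the reason part~(2) loses only one power of $\eta^{-1}$ rather than two — is the bookkeeping in the integration by parts: one must notice that the terms in which the extra $x_i$-derivative lands on $h(r,G(\bx))$ reassemble, via $\sum_i x_i\cdot 2\eps_i x_i=2G(\bx)$, into $G(\bx)\,h_2(r,G(\bx))$, which is no larger than $h(r,G(\bx))$ itself, so that the only genuine loss of a power of $\eta$ comes from the single derivative hitting the weight $w$.
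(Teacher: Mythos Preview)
Your approach is correct and essentially coincides with the paper's proof. The paper also reduces to an $L^1$-type control of $h(r,\cdot)$ over $\supp w$, but instead of the coarea formula it argues directly that the sublevel set $\{\bx\in\supp w:|G(\bx)|\le z\}$ has Lebesgue measure $O(z)$ (by noting that two of the $\eps_i$ agree and integrating over an annulus in those two variables), and then splits the integral into $|G(\bx)|\le r$ and $|G(\bx)|>r$; this is the same information your coarea bound encodes. Your integration-by-parts identity in part~(2) is exactly the paper's divergence theorem step $\int_{\R^4}\nabla\cdot\bigl(w(\bx)\,h(r,G(\bx))\,e(-\bv\cdot\bx)\,\bx\bigr)\,\d\bx=0$ written componentwise, and the three resulting terms --- a multiple of $wh$, the term with $(\bx\cdot\nabla w)\,h$ costing one factor $\eta^{-1}$, and the term $2G(\bx)\,w(\bx)\,h_2(r,G(\bx))$ --- match the paper's \eqref{divergence theorem} and are treated the same way.

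One small point to tighten: the bound $\int_\R|h(r,t)|\,\d t\ll 1$ is not literally correct, since the estimate from \cite[Lemma~5]{MR1421949} carries a non-decaying term $r^{N-1}$ in $t$. What you actually need --- and what the coarea step delivers, because $\{G=t\}\cap\supp w=\varnothing$ for $|t|>16$ --- is $\int_{-16}^{16}|h(r,t)|\,\d t\ll 1$, which does follow. The same remark applies to the companion bounds for $r\,h_1(r,t)$ and $t\,h_2(r,t)$.
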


\begin{proof} Suppose that $\bc=\0$. Recalling (\ref{iqa0}) and the notation $r=q/Q$, it is clear that 
\begin{equation}\label{derivatives of iqn}q^k\frac{\del^kI_{q,\ba}(\0)}{\del q^k} = r^{k}P_1\cdots P_4\int_{\R^4}w(\bx)\frac{\del^k h(r,G(\bx))}{\del r^k} \mathrm{d}\bx.\end{equation}
From \cite[Lemma 5]{MR1421949} with $m=k, n=0$ and $N=2$, we have 
$$\frac{\del^k h(r,G(\bx))}{\del r^k}\ll r^{-1-k}\left(r^2+ \min\left\{1,\frac{r^2}{|G(\bx)|^2}\right\}\right),$$
and so 
\begin{equation}\label{measures of sets}(P_1\cdots P_4)^{-1}\frac{\del^kI_{q,\ba}(\bc)}{\del q^k} \ll r + r^{-1}\int_{\substack{\bx \in \supp w\\ |G(\bx)|\leq r}}1\mathrm{d}\bx + r\int_{\substack{\bx \in \supp w\\|G(\bx)|>r}}\frac{1}{|G(\bx)|^2}\mathrm{d}\bx. \end{equation}

We need to show that the right hand side of (\ref{measures of sets}) is $O(1)$. We can assume that the first term $r$ is $O(1)$, since $h(r,G(\bx))=0$ when $r>\max(1,|G(\bx)|)$, and $G(\bx) \ll 1$ for $x \in \supp w$, and so $I_{q,\ba}(\0)=0$ unless $r \ll 1$. In order to estimate the integrals in (\ref{measures of sets}), we consider for $z>0$ the Lebesgue measure $m(z)$ of the set 
$$S(z)=\{\bx \in \supp w:|G(\bx)|\leq z\}.$$ 
We can find distinct indices $i,j$ such that $\eps_i = \eps_j$. Without loss of generality we may assume $i=1, j=2$, and in addition that $\eps_1 = \eps_2 = 1$. For a fixed choice of $x_3, x_4$, we define $c = \eps_3x_3^2 + \eps_4x_4^2$, so that if $\bx \in S(z)$ then $x_1^2+x_2^2 \in [-c-z,-c+z]$. The measure of the set of pairs $x_1,x_2$ satisfying this condition is $O(z)$. For $\bx \in \supp w$, we require $|x_3|, |x_4| \ll 1$, and hence $m(z) = O(z)$. From this we see that the first integral in (\ref{measures of sets}) is $O(r)$, and by breaking into dyadic intervals, the second integral is $O(r^{-1})$. Therefore the right hand side of (\ref{measures of sets}) is $O(1)$, as required.

When $\bc \neq \0$, we first note that for $k=0$, we can reduce to the case above by applying the trivial estimate to the extra exponential factor $e(-\bv\cdot\bx)$ appearing in the integral defining $I_{q,\ba}(\bc)$. It remains only to deal with the case $\bc \neq \0, k=1$.  Define the vector $\bv \in \R^4$ by $v_i = q^{-1}P_ic_i$. Similarly to (\ref{derivatives of iqn}), we obtain
\begin{align*}
&(P_1 \cdots P_4)^{-1}q\frac{\del I_{q,\ba}(\bc)}{\del q} \ll r\int_{\R^4}w(\bx)\frac{\del \{h(r,G(\bx))e(-\bv\cdot \bx)\}}{\del r} \mathrm{d}\bx\\
& = r\int_{\R^4}w(\bx)\left(\frac{\del h(r,G(\bx))}{\del r}e(-\bv\cdot \bx) + h(r,G(\bx))\frac{-2\pi i \bv \cdot \bx }{r}e(-\bv\cdot \bx)\right) \mathrm{d}\bx.
\end{align*}
The first term can again be dealt with using the trivial estimate for $e(-\bv\cdot \bx)$ and the argument for the case $\bc = \0$ given above. In order to estimate the second term, as in \cite[Lemma 14]{MR1421949}, we apply the divergence theorem 
$$\int_{\R^4}\nabla\cdot (w(\bx)h(r,G(\bx)e(-\bv\cdot \bx)\bx)\mathrm{d}\bx = 0$$ 
in order to remove the additional factor of $2\pi i \bv \cdot \bx$. This yields 
\begin{equation}\label{divergence theorem}
\begin{split}
&\int_{\R^4}w(\bx)h(r,G(\bx))e(-\bv\cdot \bx)(2\pi i \bv \cdot \bx)\mathrm{d}\bx =\int_{\R^4}\wt h(r,G(\bx))e(-\bv \cdot \bx)\mathrm{d}\bx\\
 +&\int_{\R^4}4w(\bx) h(r,G(\bx))e(-\bv\cdot \bx)\mathrm{d}\bx+\int_{\R^4}2w(\bx)G(\bx)e(-\bv\cdot \bx)\frac{\del h(r,G(\bx))}{\del G(\bx)}\mathrm{d}\bx,
\end{split}
\end{equation}
where $\wt = (\bx \cdot \nabla )w(\bx)$. The second integral on the right hand side of (\ref{divergence theorem}) can now be treated in the same way as the easier cases $k=0$ or $\bc = \0$ using the trivial estimate for $e(-\bv \cdot \bx)$. The third integral can similarly be estimated by noting that $G(\bx)\ll 1$ for $\bx \in \supp(w)$, and applying \cite[Lemma 5]{MR1421949} with $n=1,m=0,N=0$. For the first integral, we note that $\wt$ is uniformly bounded by $\eta^{-1}$. Hence the above arguments can be applied to the first integral as well, but with an extra factor of $\eta^{-1}$.
\end{proof}

We define 
\begin{equation}\label{Sigma bc}\Sigma(x;\bc) = \sum_{q\leq x}q^{-3}S_{q,\ba}(\bc).\end{equation}
Moreover, we let $F^*_{\ba}$ denote the \textrm{dual form} of $F_{\ba}$, i.e., the quadratic form given by the equation 
$$F^*_{\ba}(\bc) = a_2a_3a_4c_1^2+a_1a_3a_4c_2^2+a_1a_2a_4c_3^2+a_1a_2a_3c_4^2.$$
We also define 
\begin{equation}\label{sneaky delta def}
\Delta_{\bc}(\ba) = \prod_{i=1}^4\gcd\left(\gcd(a_i,c_i), \prod_{j\neq i}\gcd(a_j,c_j)\right).
\end{equation}
In particular we have $\Delta_{\bc}(\ba) = \Delta$ when $\bc = \0$.

The following lemma is a slight modification of \cite[Lemma 4.9]{browningbundle}.

\begin{lemma}\label{refined lemma 4.9}
Suppose $A \neq \square$. Let $\bc \in \Z^4$ be such that $F^{*}_{\ba}(\bc)=0$. Then 
$$\Sigma(x;\bc) \ll A^{3/16+\eps}\Delta_{\bc}(\ba)^{3/8}x^{1/2+\eps}.$$
\end{lemma}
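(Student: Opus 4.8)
The plan is to bound the Gauss-type sum $S_{q,\ba}(\bc)$ explicitly in terms of $q$, $A$, and $\Delta_{\bc}(\ba)$, and then sum over $q \leq x$ using the hypothesis $F^*_{\ba}(\bc) = 0$. The starting point is the standard multiplicativity of $S_{q,\ba}(\bc)$ in $q$: since $F_{\ba}$ is diagonal, $S_{q,\ba}(\bc)$ factors as a product over prime powers $p^\ell \| q$ of local sums, each of which is itself a product of four one-dimensional Gauss sums $\sum_{b \bmod p^\ell} e_{p^\ell}(k a_i b^2 + c_i b)$. The classical evaluation of quadratic Gauss sums shows such a one-dimensional sum is $O(p^{\ell/2})$ when $p \nmid a_i$ (and $p$ odd), giving the square-root cancellation $|S_{p^\ell,\ba}(\bc)| \ll p^{2\ell}$ generically; the savings over this bound degrade at primes dividing the $a_i$ or where $p^\ell \mid F^*_{\ba}(\bc)$ in an appropriate sense. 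This is exactly the analysis carried out in \cite{browningbundle} for \cite[Lemma 4.9]{browningbundle}, and I would follow it closely, tracking how $\Delta_{\bc}(\ba)$ (rather than $\Delta$) enters when one only assumes $F^*_{\ba}(\bc) = 0$ rather than $\bc = \0$.

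Concretely, the key intermediate estimate I would aim for is a bound of the shape
\begin{equation*}
|S_{q,\ba}(\bc)| \ll q^{2+\eps}\, (q, F^*_{\ba}(\bc))^{1/2}\, \gcd\text{-factors involving } A \text{ and } \Delta_{\bc}(\ba),
\end{equation*}
valid for squarefree $q$ first, and then extended to general $q$ by a more careful prime-power analysis (the prime-power case is where powers of $A$ genuinely appear, since higher congruences mod $p^\ell$ for primes $p \mid a_i$ contribute extra factors of $p$). Writing $q = q_1 q_2$ where $q_1$ collects the part of $q$ coprime to $2A$ and $q_2$ the rest, one gets near-perfect square-root cancellation from $q_1$ controlled by $(q_1, F^*_{\ba}(\bc))$, while $q_2 \ll A^{O(1)}$ contributes a bounded power of $A$. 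Then $q^{-3} S_{q,\ba}(\bc) \ll q^{-1+\eps}(q, F^*_{\ba}(\bc))^{1/2}\cdot(\text{$A,\Delta_{\bc}$ factors})$, and summing over $q \leq x$ the divisor-type sum $\sum_{q \leq x} q^{-1+\eps}(q,n)^{1/2}$ is $O(x^{1/2+\eps})$ when $n \neq 0$ — but here $n = F^*_{\ba}(\bc) = 0$, so this naive route fails and one instead must use that $F^*_{\ba}(\bc) = 0$ forces additional structure on $\bc$ modulo $q$, which is precisely what keeps $S_{q,\ba}(\bc)$ small; the $x^{1/2+\eps}$ then comes from summing $q^{-1/2+\eps}$ over $q\leq x$. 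Balancing the exponents of $A$ and $\Delta_{\bc}(\ba)$ against this to land on $A^{3/16+\eps}\Delta_{\bc}(\ba)^{3/8}x^{1/2+\eps}$ is the bookkeeping core of the argument.

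The main obstacle I anticipate is the prime-power local analysis at primes $p \mid 2A$, specifically getting the exponents $3/16$ on $A$ and $3/8$ on $\Delta_{\bc}(\ba)$ rather than something weaker. At such primes the one-dimensional Gauss sum $\sum_{b \bmod p^\ell} e_{p^\ell}(k a_i b^2 + c_i b)$ no longer enjoys clean square-root cancellation, and its size depends delicately on $\nu_p(a_i)$, $\nu_p(c_i)$, and $\ell$; one must case-split and, crucially, exploit the constraint $F^*_{\ba}(\bc) = 0$ to rule out the worst combinations (e.g.\ where several coordinates simultaneously fail to oscillate). The definition of $\Delta_{\bc}(\ba)$ via the nested gcds $\prod_i \gcd(\gcd(a_i,c_i), \prod_{j\neq i}\gcd(a_j,c_j))$ is tailored precisely to capture the product of these local defects, and the hypothesis $A \neq \square$ is what prevents the singular series from diverging (it is what distinguishes the rank-one from the rank-two quadric, cf.\ the remark after \eqref{manin for quadrics}). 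Since this is a direct adaptation of \cite[Lemma 4.9]{browningbundle} with $\Delta$ replaced by $\Delta_{\bc}(\ba)$ throughout and the box-size/coprimality bookkeeping adjusted, I would expect the proof to consist mostly of carefully re-deriving their local bounds without the simplifying assumptions that $A$ is nearly squarefree and the $a_i$ are comparable in size, and then reassembling the global sum.
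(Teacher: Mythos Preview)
Your proposal has a genuine gap at the most important step: you have not correctly identified where the saving $x^{1/2+\eps}$ comes from, nor where the exponent $3/16$ on $A$ originates.

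For $q_1$ coprime to $2A$, the Gauss-sum evaluation does not give $q_1^{-3}S_{q_1,\ba}(\bc) \ll q_1^{-1/2+\eps}$ as you suggest. Rather, when $F^*_{\ba}(\bc)=0$ one has the \emph{exact identity} (this is \cite[Lemma~4.6]{browningbundle})
\[
q_1^{-3}S_{q_1,\ba}(\bc) \;=\; \left(\frac{A}{q_1}\right)\frac{\phi(q_1)}{q_1},
\]
whose absolute value is $\leq 1$, not $O(q_1^{-1/2})$. Summing trivially over $q_1\leq x$ therefore gives only $O(x)$. The $x^{1/2+\eps}$ comes from \emph{cancellation in a character sum}: since $A\neq\square$, the Jacobi symbol $\left(\tfrac{A}{\cdot}\right)$ is a non-principal Dirichlet character of conductor $O(|A|)$, and the Burgess bound yields
\[
\sum_{q_1\leq X}\left(\frac{A}{q_1}\right)\frac{\phi(q_1)}{q_1} \;\ll\; |A|^{3/16+\eps}X^{1/2}.
\]
This is the true source of both the $x^{1/2}$ and the exponent $3/16$; neither comes from local prime-power analysis as you anticipate. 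This is also the only place where the hypothesis $A\neq\square$ is used.

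The paper's proof then handles the complementary factor $q_2\mid(2A)^\infty$ via the crude bound $|S_{q_2,\ba}(\bc)|\ll q_2^3\prod_i\gcd(q_2,a_i,c_i)^{1/2}$ from \cite[Lemma~4.5]{browningbundle}, together with a short $p$-adic computation showing
\[
\sup_{q\mid(2A)^\infty}\frac{\prod_{i=1}^4\gcd(q,a_i,c_i)^{1/2}}{q^{1/2}} \;\leq\; \Delta_{\bc}(\ba)^{3/8}.
\]
So the $\Delta_{\bc}(\ba)^{3/8}$ does indeed come from the bad primes, and your instinct there is correct; but the $A^{3/16}$ is purely the Burgess exponent and has nothing to do with delicate local case-splitting.
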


\begin{proof}
We use the fact that $S_{q,\ba}(\bc)$ is multiplicative in $q$, as proved by Heath-Brown in the discussion following \cite[Lemma 28]{MR1421949}. This allows us to write
\begin{equation}\label{using multiplicativity}
\Sigma(x;\bc) = \sum_{\substack{q_2\leq x\\ q_2|(2A)^{\infty}}}q_2^{-3}S_{q_2,\ba}(\bc)\sum_{\substack{q_1 \leq x/q_2\\ \gcd(q_1,2A) = 1}}q_1^{-3}S_{q_1,\ba}(\bc),    
\end{equation} 
where the notation $q_2|(2A)^{\infty}$ means that every prime dividing $q_2$ also divides $2A$. By \cite[Lemma 4.6]{browningbundle}, we have
$$\sum_{\substack{q_1 \leq x/q_2\\ \gcd(q_1,2A) = 1}}q_1^{-3}S_{q_1}(\bc) = \sum_{\substack{q_1 \leq x/q_2 \\  \gcd(q_1,2A) = 1}}\left(\frac{A}{q_1}\right)\frac{\phi(q_1)}{q_1},$$ 
where $\phi$ denotes the Euler totient function. Applying the Burgess bound as in the proof of \cite[Lemma 4.9]{browningbundle}, we obtain
$$\sum_{\substack{q_1 \leq x/q_2\\ \gcd(q_1,2A) = 1}}q_1^{-3}S_{q_1,\ba}(\bc)\ll \frac{|A|^{3/16+\eps}x^{1/2}}{q_2^{1/2}}.$$
Returning to (\ref{using multiplicativity}), we have 
\begin{equation}\label{combine with this}
\Sigma(x;\bc)\ll |A|^{3/16+\eps}x^{1/2}\sum_{\substack{q_2\leq x\\ q_2|(2A)^{\infty}}}\frac{|S_{q_2,\ba}(\bc)|}{q_2^{7/2}}.
\end{equation}

Now \cite[Lemma 4.5]{browningbundle} states that $S_{q,\ba}(\bc) \ll q^3\prod_{i=1}^4 \gcd(q,a_i,c_i)$. Moreover, there are $O(x^{\eps}A^{\eps})$ choices for $q_2 \leq x$ with $q_2|(2A)^{\infty}$. Therefore 

\begin{equation}\label{a sup}
\sum_{\substack{q_2\leq x\\ q_2|(2A)^{\infty}}}\frac{|S_{q_2,\ba}(\bc)|}{q_2^{7/2}}\ll x^{\eps}A^{\eps}\sup_{\substack{q|(2A)^{\infty}}}\left(\frac{\prod_{i=1}^4 \gcd(q,a_i,c_i)^{1/2}}{q^{1/2}}\right).
\end{equation}
Let $p$ be a prime. We define
$$L_p = \nu_{p}\left(\frac{\prod_{i=1}^4 \gcd(q,a_i,c_i)}{q}\right),\quad k_p = \nu_p(q).$$ 
For an index $i=1,\ldots, 4$, we let $m_{i,p}$ denote the $i$th smallest element of $\nu_p(\gcd(a_1,c_1)),\ldots, \nu_p(\gcd(a_4,c_4))$.
Then 
$$L_p = \sum_{i=1}^4 \left(\min(k_p,m_{i,p}) - k_p\right).$$

 It can be checked that the maximum possible value of $L_p$ is attained by choosing $k_p = m_{3,p}$, and so $L_p \leq m_{1,p}+ m_{2,p}+m_{3,p}$. We have $L_p\leq 0$ unless $p|\Delta_{\bc}(\ba)$. Hence the supremum in (\ref{a sup}) is bounded by 
 $$\prod_{p|\Delta_{\bc}(\ba)}p^{(m_{1,p}+m_{2,p}+m_{3,p})/2}.$$
 For any prime $p$, we have 
 $$\nu_p(\Delta_{\bc}(\ba)) = m_{1,p}+m_{2,p}+m_{3,p}+\min(m_{1,p}+m_{2,p}+m_{3,p},m_{4,p}) \geq \frac{4}{3}(m_{2,p}+m_{3,p}),$$ 
and hence the supremum in (\ref{a sup}) is bounded by $\Delta_{\bc}(\ba)^{1/3}$. Recalling (\ref{combine with this}) we obtain the desired estimate for $\Sigma(x;\bc)$. 
\end{proof}

\begin{proposition}\label{asymptotics for main term} Suppose $A \neq \square$, and let $M(B)$ be as defined in (\ref{definition of M(B)}). Then
$$M(B) = \frac{\mathfrak{G}_{\ba}\sigma_{\infty}(w)B}{|A|^{1/2}} + O\left(\frac{\eta^{-2}B^{3/4+\epsilon}\Delta^{3/8}}{|A|^{5/16}}\right).$$
\end{proposition}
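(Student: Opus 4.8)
The plan is to isolate the main term from the truncated sum $\sum_{q \leq R} q^{-4} S_{q,\ba}(\0)$ appearing in \eqref{split the main term up}, show it produces $\mathfrak{G}_{\ba}\sigma_\infty(w) B / |A|^{1/2}$ up to an acceptable error, and separately bound the tail $T(R)$ from \eqref{TR}. First I would recall that $P_1\cdots P_4 = B^2/|A|^{1/2}$, so the main term in \eqref{split the main term up} is $\dfrac{\sigma_\infty(w)B}{|A|^{1/2}}\sum_{q \leq R} q^{-4} S_{q,\ba}(\0)$. Comparing with the definition \eqref{singular series} of $\mathfrak{G}_{\ba}$, the error from truncation is $\dfrac{\sigma_\infty(w)B}{|A|^{1/2}}\sum_{q > R} q^{-4}S_{q,\ba}(\0)$, and this is controlled by partial summation together with the bound on $\Sigma(x;\0)$ from Lemma \ref{refined lemma 4.9} (which applies since $\Delta_{\0}(\ba) = \Delta$ and $A \neq \square$): writing $\sum_{q > R} q^{-4} S_{q,\ba}(\0) = \sum_{q > R} q^{-1}\bigl(q^{-3}S_{q,\ba}(\0)\bigr)$ and summing by parts against $\Sigma(x;\0) \ll |A|^{3/16+\eps}\Delta^{3/8} x^{1/2+\eps}$ gives a bound of order $|A|^{3/16+\eps}\Delta^{3/8} R^{-1/2+\eps}$. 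Since $\sigma_\infty(w) \ll 1$ and $R = B^{1/2-\eps}\eta^2$, this contributes $O\bigl(\eta^{-1+\eps} B^{3/4+\eps}|A|^{3/16 - 1/2}\Delta^{3/8}\bigr) = O\bigl(\eta^{-2}B^{3/4+\eps}\Delta^{3/8}|A|^{-5/16}\bigr)$, as desired (absorbing $|A|^{\eps}$ and $\eta^{-\eps}$ using \eqref{harmless assumption on eta}).

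Next I would bound $T(R)$ itself. Using Lemma \ref{lemma 4.3*}(1) with $\bc = \0$, we have $I_{q,\ba}(\0) \ll P_1 \cdots P_4 = B^2/|A|^{1/2}$, and moreover $q^{\del_q} I_{q,\ba}(\0) \ll P_1\cdots P_4$, so $I_{q,\ba}(\0)$ is amenable to partial summation in $q$. Then
\[
T(R) = \frac{1}{Q^2}\sum_{q > R} q^{-1}\bigl(q^{-3}S_{q,\ba}(\0)\bigr) I_{q,\ba}(\0),
\]
and summing by parts, using $\Sigma(x;\0) \ll |A|^{3/16+\eps}\Delta^{3/8}x^{1/2+\eps}$ for the $S$-part and the bounds on $I_{q,\ba}(\0)$ and its derivative for the smooth part (noting $I_{q,\ba}(\0) = 0$ for $q$ larger than a bounded multiple of $Q$, by the support properties of $h$ recalled after \eqref{definition of h(x,y)}), yields
\[
T(R) \ll \frac{1}{Q^2}\cdot \frac{B^2}{|A|^{1/2}}\cdot \sup_{R < x \ll Q}\frac{|\Sigma(x;\0)|}{x} \ll \frac{B}{|A|^{1/2}}\cdot |A|^{3/16+\eps}\Delta^{3/8}R^{-1/2+\eps}.
\]
This is the same shape as the truncation error above, hence also $O\bigl(\eta^{-2}B^{3/4+\eps}\Delta^{3/8}|A|^{-5/16}\bigr)$; the additive $O(1)$ in \eqref{split the main term up} is trivially absorbed.

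The main obstacle is bookkeeping rather than any deep new input: one must verify that the partial-summation boundary terms at $x \to \infty$ vanish (which they do, since $S_{q,\ba}(\0)$ has no archimedean growth beyond $q^3$ times a bounded factor and $I_{q,\ba}(\0)$ is eventually zero), and that every stray factor of $|A|^{\eps}$, $\eta^{-\eps}$, or $B^\eps$ can be swept into the final $B^\eps$ using the harmless assumption \eqref{harmless assumption on eta}. I would also double-check the exponent arithmetic: $\tfrac12 - \eps$ from $R^{-1/2}$ against $R = B^{1/2-\eps}\eta^2$ gives $B^{-1/4}\eta^{-1}$ up to $B^\eps$, and $\tfrac{3}{16} - \tfrac12 = -\tfrac{5}{16}$ in the power of $|A|$, while the $\eta^{-1}$ is crudely bounded by $\eta^{-2}$ since $\eta < 1/4$ — matching the stated error term exactly. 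Assembling the two pieces into \eqref{split the main term up} completes the proof.
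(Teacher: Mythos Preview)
Your proposal is correct and follows essentially the same route as the paper: split off the tail $T(R)$, apply Abel summation with Lemma~\ref{refined lemma 4.9} for $\Sigma(x;\0)$ and Lemma~\ref{lemma 4.3*} for $I_{q,\ba}(\0)$ and its $q$-derivative (using that $I_{q,\ba}(\0)$ vanishes for $q\gg Q$), and then handle the truncation $\sum_{q>R}q^{-4}S_{q,\ba}(\0)$ in the same way. The only cosmetic difference is that your integral estimate yields $\eta^{-1}$ rather than the paper's $\eta^{-2}$ (the paper pulls $\sup_t|\Sigma(t;\0)|$ out of the integral before integrating $x^{-2}$, whereas you integrate $x^{-3/2+\eps}$ directly), and you then weaken $\eta^{-1}$ to $\eta^{-2}$ to match the stated bound; this is harmless.
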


\begin{proof} We recall the estimate for $M(B)$ from (\ref{split the main term up}) and the definition of $T(R)$ from \eqref{TR}. We may restrict the sum to the range $R<q\ll Q$, since $h(r,G(\bx))=0$ unless $q \ll Q$. Using Lemma \ref{refined lemma 4.9} with $x\ll Q$ and $\bc =\bm{0}$, together with  \cref{lemma 4.3*}, and partial summation, we obtain 
\begin{align*}
    T(R) &=\frac{-I_R(\0)}{Q^2R}\Sigma(R;\0)-\frac{1}{Q^2}\int_{R}^{Q}\Sigma(x;\0)\frac{\del}{\del x}\left(\frac{I_{x,\ba}(\0)}{x}\right)\mathrm{d}x\\
    &\ll\frac{P_1\cdots P_4}{BR}\sup_{R\leq t \ll Q}|\Sigma(t;\0)|\\
    &\ll \frac{P_1\cdots P_4}{B R}|A|^{3/16+\eps}\Delta^{3/8}B^{1/4+\epsilon}\\
    &\ll \frac{\eta^{-2}B^{3/4+\eps}\Delta^{3/8}}{|A|^{5/16}},
\end{align*}
which is the error term claimed in the proposition. Finally, the same error term is also obtained when we extend the sum $\sum_{q \leq R}q^{-4}S_{q,\ba}(\0)$ in (\ref{split the main term up}) to the singular series $\mathfrak{G}_{\ba} = \sum_{q=1}^{\infty}q^{-4}S_{q,\ba}(\0)$, as can be seen by applying Lemma \ref{refined lemma 4.9} and partial summation in a very similar manner to above.
\end{proof}

\subsection{The error term}

We now study the error term coming from the case $\bc \neq \0$. We begin with a lemma which is similar to \cite[Lemma 4.2 (i)]{browningbundle}.

\begin{lemma}\label{modified lemma 4.2} For any nonzero $\bc \in \Z^4$ and any integer $N\geq 0$, we have
$$I_{q,\ba}(\bc) \ll_N \frac{P_1 \cdots P_4}{r}\left(1 + \frac{r}{\eta}\right)^N \min_{1\leq i\leq 4}\left(\frac{|a_i|^{1/2}}{|c_i|}\right)^N.$$
\end{lemma}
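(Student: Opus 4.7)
My plan is to integrate by parts $N$ times in a single variable $x_i$ in the representation (\ref{iqa0}) of $I_{q,\ba}(\bc)$, exploiting the oscillation of $e(-\bv\cdot\bx)$. The choice of $i$ is free, so at the end one minimizes over $i\in\{1,\ldots,4\}$ (restricted to those with $c_i\neq 0$). Using $P_i = B^{1/2}/|a_i|^{1/2}$ and $r=q/Q=q/B^{1/2}$, one has $|v_i|=P_i|c_i|/q=|c_i|/(r|a_i|^{1/2})$, so each integration by parts contributes a factor of $r|a_i|^{1/2}/|c_i|$. The base case $N=0$ is immediate from the trivial bound $|h(r,G(\bx))|\ll r^{-1}$ together with the fact that $\supp w\subseteq\{|\bx|\leq 1+\eta\}$ has measure $O(1)$, giving exactly $I_{q,\ba}(\bc)\ll P_1\cdots P_4/r$.

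For $N\geq 1$, since $w$ is compactly supported there are no boundary terms, so $N$-fold integration by parts in $x_i$ yields
$$I_{q,\ba}(\bc)=\frac{P_1\cdots P_4}{(2\pi i v_i)^N}\int_{\R^4}\frac{\del^N}{\del x_i^N}\bigl(w(\bx)h(r,G(\bx))\bigr)e(-\bv\cdot\bx)\,\mathrm{d}\bx.$$
I would then expand the derivative via Leibniz's rule into a sum over $0\leq j\leq N$ of terms containing $\del^{N-j}w/\del x_i^{N-j}$ and $\del^j h(r,G(\bx))/\del x_i^j$. The former is bounded by $O(\eta^{-(N-j)})$ via (\ref{derivatives of smooth weights}). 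For the latter, note $\del G/\del x_i=2\eps_i x_i=O(1)$ on $\supp w$, $\del^2 G/\del x_i^2=2\eps_i$, and all higher partials of $G$ in $x_i$ vanish; Faà di Bruno's formula therefore reduces the task to bounding $\del^k h(r,y)/\del y^k$ for $k\leq j$. The estimate $\del^k h/\del y^k\ll r^{-1-k}$ follows from \cite[Lemma 5]{MR1421949} exactly as invoked in the proof of \cref{lemma 4.3*} (and is applied in the regime $r\ll 1$ where $w\cdot h$ is nonzero), yielding $|\del^j h(r,G(\bx))/\del x_i^j|\ll r^{-1-j}$ on $\supp w$.

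Combining these estimates and the measure bound on $\supp w$, the $j$-th term in the Leibniz expansion contributes
$$\frac{P_1\cdots P_4}{|v_i|^N}\cdot\eta^{-(N-j)}\cdot r^{-1-j}=\frac{P_1\cdots P_4}{r}\left(\frac{|a_i|^{1/2}}{|c_i|}\right)^N\left(\frac{r}{\eta}\right)^{N-j},$$
after substituting $|v_i|^{-1}=r|a_i|^{1/2}/|c_i|$. Summing $\sum_{j=0}^N\binom{N}{j}(r/\eta)^{N-j}=(1+r/\eta)^N$ via the binomial theorem, and finally minimizing over $i$, produces the claimed inequality. The only real bookkeeping is the Faà di Bruno step, but the diagonal form of $G$ keeps it routine; the estimate on $\del^k h/\del y^k$ is already a direct citation of the technology of \cite{MR1421949} used elsewhere in the section.
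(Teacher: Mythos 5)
Your proposal is correct and follows essentially the same route as the paper: integrate by parts $N$ times in a single coordinate, bound the derivatives of $w$ by $\eta^{-k}$ via (\ref{derivatives of smooth weights}) and the derivatives of $h(r,G(\bx))$ by $r^{-1-j}$ via the chain rule and \cite[Lemma 5 / Eq.~(4.10)]{browningbundle}, then choose the index $i$ maximising $|P_ic_i|$. The only cosmetic difference is that you keep the full Leibniz sum and apply the binomial theorem to get $(1+r/\eta)^N$ exactly, whereas the paper takes a maximum over $j$ and bounds it by $r^{-1}(1/r+1/\eta)^N$, which rearranges to the same thing.
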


\begin{proof} We apply integration by parts $N$ times to the integral in (\ref{iqa0}), where we differentiate $f(\bx):=w(\bx)h(r,G(\bx))$ and integrate $g(\bx):=e(\bv\cdot\bx)$. Each integral of $g(\bx)$ with respect to $x_i$ introduces a factor of $(2\pi i q^{-1}P_ic_i)^{-1}$ .

We recall from (\ref{derivatives of smooth weights}) that for any $i\in \{1,\ldots, 4\}$ and any $N\geq 1$, 
$$\left|\frac{\del^{N}}{\del x_i^{N}}w(\bx)\right| \ll_N \eta^{-N}.$$
Using the product rule, we have
$$\left|\frac{\del^N}{\del x_i^{N}}f(\bx)\right|\ll_{N} \max_{0\leq j \leq N}\left|\eta^{j-N}\frac{\del^{j}}{\del x_i^{j}}h(r,G(\bx))\right|.$$ 
Clearly
$$\left|\frac{\del^{j}}{\del x_i^{j}}G(\bx)\right|\ll 1$$ 
for all $\bx\in \op{supp}w$. By \cite[Equation (4.10)]{browningbundle}, we have  
\begin{equation}\label{derivatives of h}\frac{\del^{j}h(r,y)}{\del y^j}\ll_j r^{-1-j}.\end{equation}
Hence by the chain rule, we have
\begin{equation}\left|\frac{\del^{N}}{\del x_i^{N}}f(\bx)\right|\ll_N \max_{0\leq j\leq N}r^{-1-j}\eta^{j-N} = r^{-1}\min(r,\eta)^{-N}\leq r^{-1}\left(\frac{1}{r}+ \frac{1}{\eta}\right)^N.\label{derivatives of f}\end{equation}

After performing integration by parts $N$ times, we apply (\ref{derivatives of f}) to all derivatives of $f(\bx)$ that appear, and the trivial estimate $|e(-\bv\cdot\bx)|\leq 1$, to obtain for any $i\in \{1,\ldots, 4\}$,
\begin{align*}
I_{q,\ba}(\bc) &\ll_N \frac{P_1\cdots P_4}{r(q^{-1}|P_ic_i|)^N}\left(\frac{1}{r}+\frac{1}{\eta}\right)^N.
\end{align*}
Recalling that $P_i = (B/|a_i|)^{1/2}, Q=B^{1/2}, r = q/Q$, and choosing the index $i$ such that $|P_ic_i|$,  is maximised, the last expression rearranges to give the desired estimate.
\end{proof}

\begin{remark}\label{remarkably} Lemma \ref{modified lemma 4.2} allows us to assume $|c_i| \ll \eta^{-1}|a_i|^{1/2}B^{\eps}=C_i$ for all $i\in\{1,\ldots, 4\}$, since outside this range the estimate in Lemma \ref{modified lemma 4.2} can be made smaller than any power of $B$ by an appropriate choice of $N$.
\end{remark}

The following lemma is a variant of the first derivative test and is based on \cite[Lemma 10]{MR1421949}.

\begin{lemma}\label{first derivative test} Let $f(\bx) = \theta G(\bx) - \bv\cdot \bx$, where $\bv \in \R^4$ and $\theta \in \R$ are such that $|\bv|\geq 5|\theta|$. Then for any integer $N\geq 0$ and any $w$ in the class of smooth weight functions $S$ from Definition \ref{class of smooth weights S}, we have
$$\int_{\R^4}w(\bx)e(f(\bx))\mathrm{d}\bx \ll_N (\eta|\bv|)^{-N}.$$
\end{lemma}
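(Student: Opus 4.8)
The strategy is the standard non-stationary phase / repeated integration by parts argument, exactly in the spirit of \cite[Lemma 10]{MR1421949} and mirroring the proof of Lemma \ref{estimate for J} above. The phase is $f(\bx) = \theta G(\bx) - \bv\cdot\bx$, and the point of the hypothesis $|\bv| \geq 5|\theta|$ is to guarantee that the gradient $\nabla f(\bx) = 2\theta(\eps_1 x_1,\dots,\eps_4 x_4) - \bv$ never vanishes on $\supp w$, and in fact $|\nabla f(\bx)| \gg |\bv|$ there. Indeed, on $\supp w$ we have $|\bx| \leq 2$, so $|2\theta \eps_i x_i| \leq 4|\theta| \leq \tfrac{4}{5}|\bv|$ componentwise in the coordinate where $|v_i| = |\bv|$, whence $|\partial f/\partial x_i| \geq |v_i| - 4|\theta| \geq \tfrac15|\bv|$ in that coordinate. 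So at every point of $\supp w$ there is a direction $x_i$ (depending on the point, but one of only four choices) along which the phase derivative is $\gg |\bv|$.

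First I would reduce to a single coordinate: as in Lemma \ref{estimate for J}, bound $\int w(\bx) e(f(\bx))\,\mathrm{d}\bx$ by $4 \max_i \big|\int_{\{|x_i| = \max_j |x_j|\}} w(\bx) e(f(\bx))\,\mathrm{d}\bx\big|$, and by symmetry treat $i=1$. On this region $|x_1| \geq |\bx|/2 \geq \dots$ is not quite what we need; rather the relevant fact is simply $|x_1| \leq 2$, giving $|\partial f/\partial x_1| = |2\theta\eps_1 x_1 - v_1|$. Here one must be slightly careful: it is $v_1$ that should be large, which follows because on the region $|x_1| = \max_j|x_j|$ the index achieving $|\bv| = \max_j |v_j|$ need not be $1$. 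A cleaner route is to not reduce coordinates at all, but instead integrate by parts using the differential operator $D = \frac{1}{2\pi i}\,\frac{\nabla f \cdot \nabla}{|\nabla f|^2}$, which satisfies $D\, e(f) = e(f)$; each application of the adjoint $D^t$ lands on $w$ and on the smooth coefficients built from $\nabla f$ and its derivatives. Since $G$ is quadratic, $\nabla f$ is affine-linear with $|\nabla f| \gg |\bv|$ on $\supp w$, and all higher derivatives of $f$ are $O(|\theta|) = O(|\bv|)$; combined with the derivative bounds $\partial^{\bm j} w \ll_N \eta^{-|\bm j|}$ from property (5) of the class $S$ and the lower bound $|\bx| \geq \eta$ on $\supp w$ (property (3)), each integration by parts produces a gain of a factor $\ll (\eta |\bv|)^{-1}$.

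Carrying this out $N$ times and using that $w$ has compact support of measure $O(1)$ yields $\int w(\bx) e(f(\bx))\,\mathrm{d}\bx \ll_N (\eta|\bv|)^{-N}$, which is the claim. The main technical point — and the only real obstacle — is bookkeeping the repeated differentiation: after $k$ steps the integrand is $e(f)$ times a sum of terms, each a product of a derivative of $w$, a power of $|\nabla f|^{-2}$, and derivatives of $f$; one checks inductively that every such term is $\ll_N (\eta|\bv|)^{-k}$, using $|\nabla f| \gg |\bv|$, $|\nabla f| \gg \eta|\bv|$ is not needed (we have the stronger $|\nabla f|\gg|\bv|\geq\eta^{-1}\cdot\eta|\bv|$), $\|\partial^2 f\| \ll |\theta| \ll |\bv|$, and $\partial^{\bm j} w \ll \eta^{-|\bm j|}$. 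This is exactly the inductive structure already used in the proof of Lemma \ref{estimate for J}, so I would phrase the argument by analogy with that proof, or simply invoke \cite[Lemma 10]{MR1421949} after noting that the hypotheses there are met with the role of the large parameter played by $|\bv|$ and the role of the smoothness scale played by $\eta$.
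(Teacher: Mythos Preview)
Your second approach via the differential operator $D = (2\pi i)^{-1}\,|\nabla f|^{-2}\,\nabla f\cdot\nabla$ is correct in principle and would yield the bound after careful bookkeeping. However, you have overcomplicated matters, and your aborted first attempt reveals that you missed the key simplification the paper uses.

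The point is that the index $j$ at which $|v_j|=|\bv|$ is \emph{fixed}, independent of $\bx$. Since $f_j(\bx)=\partial f/\partial x_j = 2\theta\eps_j x_j - v_j$ and $|x_j|\leq 2$ on $\supp w$, the hypothesis $|\bv|\geq 5|\theta|$ gives $|f_j(\bx)|\geq |v_j|-4|\theta|\geq |\bv|/5$ uniformly for all $\bx\in[-2,2]^4$. So there is no need to split into regions or to use the full gradient: one simply integrates by parts in the single variable $x_j$, writing
\[
\int_{\R^4} w(\bx)e(f(\bx))\,\mathrm{d}\bx
= -(2\pi i\,\eta|\bv|)^{-1}\int_{\R^4}\widetilde w(\bx)\,e(f(\bx))\,\mathrm{d}\bx,
\qquad
\widetilde w(\bx)=\frac{\partial}{\partial x_j}\!\left(\frac{\eta|\bv|\,w(\bx)}{f_j(\bx)}\right).
\]
Because $f_j$ depends linearly on $x_j$ alone, $|\bv|/f_j$ and all its $x_j$-derivatives are $O(1)$ on $[-2,2]^4$; combined with property~(5) of $S$ for $w$, one checks directly that $\widetilde w$ again lies in $S$. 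The result then follows by induction on $N$, exactly as in the proof of Lemma~\ref{estimate for J}. This is both shorter and cleaner than the $D$-operator route, since the induction hypothesis is simply ``the weight is in $S$'' rather than a collection of ad hoc estimates on mixed derivatives.

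A minor remark: property~(3) of $S$ (that $w$ vanishes for $|\bx|\leq\eta$) plays no role here. The factor $\eta^{-1}$ in the final bound comes entirely from the derivative bounds in property~(5); the lower bound on $|\nabla f|$ comes from $|\bv|$, not from $|\bx|$.
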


\begin{proof} The case $N=0$ is trivial. We define $f_j(\bx)=\del f(\bx)/\del x_j = \pm 2\theta x_j -v_i$. By the assumption $|\bv|\geq 5|\theta|$, there is some index $j \in \{1,\ldots, 4\}$ such that $|f_j(\bx)|\gg |\bv|$ for any $\bx \in [-2,2]^4$. Without loss of generality, we may assume $j=1$.

We write 
$$w(\bx)e(f(\bx)) = \frac{w(\bx)}{2\pi i f_1(\bx)}\frac{\del}{\del x_1}e(f(\bx))$$
and integrate by parts with respect to $x_1$ to obtain
$$\int_{\R^4}w(\bx)e(f(\bx))\mathrm{d}\bx = -(2\pi i\eta|\bv|)^{-1}\int_{\R^4}e(f(\bx))\wt\mathrm{d}\bx,$$
where
$$\wt = \frac{\del}{\del x_1}\left(\frac{\eta |\bv|w(\bx)}{f_1(\bx)} \right) = \frac{\del}{\del x_1}(\eta w(\bx))\frac{|\bv|}{f_1(\bx)}+ \eta w(\bx)\frac{\del}{\del x_1}\left(\frac{|\bv|}{f_1(\bx)}\right).$$

It remains to show that $\widetilde{w}$ belongs to the class of smooth weight functions $S$ from Definition \ref{class of smooth weights S}, since the result then follows by induction on $N$. But this is immediate from the observations that for any $\bx \in [-2,2]^4,$ 
\begin{equation*}\frac{|\bv|}{f_1(\bx)}\ll 1, \quad \frac{\del}{\del x_1}\left(\frac{|\bv|}{f_1(\bx)}\right)\ll 1, \quad \textrm{ and } \quad \frac{\del}{\del x_1}(\eta w) \in S. \qedhere \end{equation*}
\end{proof}

We require one further estimate for $I_{q,\ba}(\bc)$, which involves the second derivative test.
\begin{lemma}\label{modified version of lemma 4.2 ii)} Let $\bc \neq \0$. Then

$$I_{q,\ba}(\bc) \ll \frac{\eta^{-4}B^{3/2+\epsilon}q}{|A|^{1/2}}\min_{1\leq i\leq 4}\left(\frac{|a_i|^{1/2}}{|c_i|}\right).$$
\end{lemma}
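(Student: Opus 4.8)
The plan is to start from the integral representation \eqref{iqa0},
\[
I_{q,\ba}(\bc) = P_1\cdots P_4\int_{\R^4}w(\bx)h(r,G(\bx))e(-\bv\cdot\bx)\,\mathrm{d}\bx,
\]
and insert a Fourier-type expansion of $h(r,\cdot)$ in its second argument, exactly as Heath-Brown does in \cite[Section 8]{MR1421949} and as is used in \cite[Lemma 4.2(ii)]{browningbundle}. Writing $h(r,G(\bx)) = \int_{\R} p(r,\theta)\,e(\theta G(\bx))\,\mathrm{d}\theta$ (or the analogous finite/absolutely convergent version from \cite[Lemma 9]{MR1421949}), with $p(r,\theta)$ supported essentially on $|\theta|\ll r^{-2}$ and satisfying $p(r,\theta)\ll r^{-1}\min\{1,(r|\theta|)^{-2}\}$ or similar, we obtain
\[
I_{q,\ba}(\bc) = P_1\cdots P_4\int_{\R} p(r,\theta)\left(\int_{\R^4}w(\bx)e(\theta G(\bx)-\bv\cdot\bx)\,\mathrm{d}\bx\right)\mathrm{d}\theta.
\]
This reduces the problem to bounding the inner oscillatory integral uniformly in $\theta$ and then integrating against $p(r,\theta)$.

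For the inner integral $J(\theta,\bv):=\int_{\R^4}w(\bx)e(\theta G(\bx)-\bv\cdot\bx)\,\mathrm{d}\bx$ I would split into two regimes according to the size of $|\bv|$ relative to $|\theta|$. When $|\bv|\geq 5|\theta|$, Lemma \ref{first derivative test} applies directly and gives $J(\theta,\bv)\ll_N (\eta|\bv|)^{-N}$, which is negligible (choosing $N$ large) unless $|\bv|\ll \eta^{-1}B^{\eps}$; in that negligible-boundary regime one just uses the trivial bound $|J|\ll 1$. When $|\bv|\leq 5|\theta|$, one uses the second derivative test: $G$ is a non-singular diagonal form, so the Hessian of $\theta G(\bx)$ is $\theta\,\mathrm{diag}(2\eps_1,\ldots,2\eps_4)$ with $|\det|\asymp|\theta|^4$, and the standard multidimensional stationary-phase/second-derivative bound (van der Corput in $4$ variables, as in \cite[Lemma 10 and its use in Section 8]{MR1421949}) gives $J(\theta,\bv)\ll |\theta|^{-2}$, with the implied constant depending only on the support and derivative bounds of $w$, hence a harmless power of $\eta^{-1}$. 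Combining, for all $\bx\in\supp w$ we get $J(\theta,\bv)\ll \min\{1,|\theta|^{-2}\}$ up to the $\eta^{-1}$-factors and the negligible tail in $|\bv|$.

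Now I substitute back and integrate in $\theta$. Using $p(r,\theta)\ll r^{-1}$ on $|\theta|\ll r^{-2}$ (and rapid decay beyond), together with $J\ll\min\{1,|\theta|^{-2}\}$, the $\theta$-integral is $\ll r^{-1}\cdot r^{-2}\cdot 1 = r^{-3}$ coming from $|\theta|\ll 1$... — wait, more carefully: on $|\theta|\leq 1$ one gets $\ll r^{-1}\cdot 1$, on $1\leq|\theta|\ll r^{-2}$ one gets $\ll r^{-1}\int_1^{r^{-2}}\theta^{-2}\,\mathrm{d}\theta\ll r^{-1}$, so altogether $\int_\R |p(r,\theta)|\,|J|\,\mathrm{d}\theta\ll r^{-1}$. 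Hence $I_{q,\ba}(\bc)\ll P_1\cdots P_4\, r^{-1}\,\eta^{-O(1)}B^\eps$. Since $P_1\cdots P_4 = B^2/|A|^{1/2}$ and $r = q/Q = q/B^{1/2}$, this is $\ll \eta^{-O(1)} B^{3/2+\eps} q/|A|^{1/2}$. Finally, to recover the $\min_i(|a_i|^{1/2}/|c_i|)$ factor one combines this bound with Lemma \ref{modified lemma 4.2} (take the case $N=1$ of that lemma, or interpolate): Lemma \ref{modified lemma 4.2} already supplies the gain $\min_i(|a_i|^{1/2}/|c_i|)^N$, and using it with $N=1$ against the just-obtained size $\ll \eta^{-O(1)}B^{3/2+\eps}q/|A|^{1/2}$ for the ``main part'' yields precisely
\[
I_{q,\ba}(\bc)\ll \frac{\eta^{-4}B^{3/2+\eps}q}{|A|^{1/2}}\min_{1\leq i\leq 4}\left(\frac{|a_i|^{1/2}}{|c_i|}\right),
\]
with the power $\eta^{-4}$ tracked through the derivative bounds \eqref{derivatives of smooth weights} and the one integration by parts producing one factor $\eta^{-1}$ beyond the $\eta^{-3}$ (say) from the second-derivative-test constants.

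**Expected main obstacle.** The delicate point is the uniform second-derivative (stationary phase) estimate $J(\theta,\bv)\ll|\theta|^{-2}$ in four variables with constants depending only on $\supp w$ and a fixed number of derivatives of $w$ — in particular keeping the $\eta$-dependence down to a fixed power $\eta^{-O(1)}$ and correctly accounting for the region $|\bv|\leq 5|\theta|$ where there may still be a stationary point of $\theta G(\bx)-\bv\cdot\bx$ inside $\supp w$. One must invoke the right version of the multidimensional van der Corput lemma (as in Heath-Brown's treatment) rather than naive iterated one-variable estimates, and then bookkeep the interplay with Lemma \ref{modified lemma 4.2} to land exactly the stated exponent $\eta^{-4}$ and the factor $q/|A|^{1/2}$.
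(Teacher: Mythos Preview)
Your broad strategy is correct and matches the paper: write $I_{q,\ba}(\bc)=P_1\cdots P_4\int p(\theta)J(\theta,\bv)\,\mathrm{d}\theta$ via a Fourier expansion of $h(r,\cdot)$, split according to whether $5|\theta|\lessgtr|\bv|$, use Lemma~\ref{first derivative test} in one regime and a second-derivative/stationary-phase bound in the other. However, the execution of the $\theta$-integration contains a genuine gap that causes you to miss the factor $\min_i(|a_i|^{1/2}/|c_i|)$ entirely.

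The problem is that after obtaining the bounds $J\ll(\eta|\bv|)^{-N}$ (for $5|\theta|\leq|\bv|$) and $J\ll|\theta|^{-2}$ (for $5|\theta|\geq|\bv|$), you collapse these to the single estimate $J\ll\min\{1,|\theta|^{-2}\}$ and then integrate over $|\theta|\ll r^{-2}$. This throws away precisely the information you need. The correct computation, which the paper carries out, keeps the splitting point at $|\theta|\asymp|\bv|$: with the paper's normalisation $p(\theta)\ll 1$, the contribution from $5|\theta|\leq|\bv|$ is $\ll|\bv|\cdot(\eta|\bv|)^{-2}=\eta^{-2}|\bv|^{-1}$, and from $5|\theta|\geq|\bv|$ it is $\ll\int_{|\theta|\gg|\bv|}|\theta|^{-2}\,\mathrm{d}\theta\ll|\bv|^{-1}$. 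Thus $I_{q,\ba}(\bc)\ll\eta^{-4}P_1\cdots P_4|\bv|^{-1}$ (the $\eta^{-4}$ coming from $\|\hat w_3\|_{L^1}$), and since $v_i=q^{-1}P_ic_i$ one has exactly
\[
\frac{P_1\cdots P_4}{|\bv|}=\frac{B^{3/2}q}{|A|^{1/2}}\min_{1\leq i\leq 4}\frac{|a_i|^{1/2}}{|c_i|}.
\]
So the $\min_i$ factor \emph{is} the factor $|\bv|^{-1}$; it arises directly from the $\theta$-integral, not from any later step.

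Your final step, ``combine with Lemma~\ref{modified lemma 4.2} with $N=1$'', does not work. Your bound $I\ll P_1\cdots P_4\,r^{-1}\eta^{-O(1)}$ is exactly the $N=0$ case of that lemma, so it contributes no new information; and you cannot interpolate between $I\ll X$ and $I\ll Y\cdot Z$ to obtain $I\ll X'\cdot Z$ with $X'$ strictly smaller than $Y$. Concretely, Lemma~\ref{modified lemma 4.2} with $N=1$ gives a factor $B^{5/2}/q$ where the target has $B^{3/2}q$, and since $q\ll B^{1/2}$ the former is never smaller. The fix is simply to redo the $\theta$-integration with the split at $|\theta|\asymp|\bv|$ as above.
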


\begin{proof} We would like to apply Fourier inversion to $w(\bx)h(r,G(\bx))e(-\bv\cdot \bx)$, the integrand appearing in the definition of $I_{q,\ba}(\bc)$. The function $\bx \mapsto h(r,G(\bx))$ does not have compact support. We define the smooth weight $w_0:\R\ra \R_{\geq 0}$ by $w_0(x) = \psi(x/17)$, where $\psi(x)$ is the standard bump function in one variable. Then we define the smooth weight $w_3:R^4 \ra \R_{\geq 0}$ by 
 $$w_3(\bx) = \begin{cases} \frac{w(\bx)}{w_0(G(\bx))}, &\mbox{if } w_0(G(\bx)) \neq 0,\\
0, & \mbox{otherwise.}
\end{cases}$$
Note that $w$ is supported on $[-2,2]^4$, and in this region $G(\bx)$ takes values in $[-16,16]$. Therefore $w_0(G(\bx)) \gg 1$ for all $\bx \in \mathrm{supp}(w)$. We deduce that $\mathrm{supp}(w_3) = \mathrm{supp}(w)$ and $w(\bx) = w_3(\bx)w_0(G(\bx))$ for all $\bx \in \R^4$. Moreover, since all the derivatives of $w_0(G(\bx))$ are $O(1)$ for any $\bx \in \supp(w)$, we have $w_3 \in S$. Applying Fourier inversion, we obtain 
\begin{equation}\label{fourier inversion} 
I_{q,\ba}(\bc) = P_1 \cdots P_4 \int_{-\infty}^{\infty} p(\theta) \int_{\R^4} w_3(\bx) e(\theta G(\bx) - \bv \cdot \bx) \,\mathrm{d}\bx\,\mathrm{d}\theta,
\end{equation}
where 
$$ p(\theta) = \int_{-\infty}^{\infty} w_0(k)h(r,k)e(-\theta k)\,\mathrm{d}k.$$
 
We have the estimate $p(\theta) \ll 1$. Indeed, taking $m=n=0$ and $N=2$ in \cite[lemma 5]{MR1421949} we see that 
$$h(r,k) \ll r^{-1}(r^2 + \min(1,(r/|k|)^2),$$
and so
\begin{align*}p(\theta)\ll r^{-1}\left(\int_{|k|<r}1 \mathrm{d}k + \int_{r\leq|k|\leq 17}\left(\frac{r}{|k|}\right)^2\mathrm{d}k \right)\ll r^{-1}(r + r^2\cdot r^{-1})\ll 1.
\end{align*}

To deal with the inner integral in (\ref{fourier inversion}), which we denote by $I(\theta;\bv)$, we divide into the cases $5|\theta| \leq |\bv|$ and $5|\theta|\geq |\bv|$. In the former case, we apply Lemma \ref{first derivative test} with $N=2$ to obtain $I(\theta;\bv)\ll (\eta|\bv|)^{-2}$. The contribution to $I_{q,\ba}(\bc)$ is 
$$ (\eta|\bv|)^{-2}P_1\cdots P_4 \int_{5|\theta|\leq |\bv|} 1 \mathrm{d}\theta \ll \eta^{-2}|\bv|^{-1}P_1\cdots P_4.$$

In the case $5|\theta|\geq |\bv|$, we use the arguments from \cite[Lemma 3.2]{MR3652248} to obtain
\begin{equation}\label{1D integrals}
I(\theta;\bv)\ll \left\{\int_{\R^4}|\hat{w_3}(\by)| \mathrm{d}\by\right\} \sup_{\by \in \R^4}\left|\prod_{i=1}^4\int_{[-2,2]} e(\theta\eps_i x_i^2 +x_i(y_i \pm v_i))\mathrm{d}x_i \right|,
\end{equation}
where $\hat{w_3}$ denotes the Fourier transform of $w_3$. The first factor on the right hand side of (\ref{1D integrals}) is the $L^1$-norm of $\hat{w_3}$, which we denote by $\|\hat{w_3}\|_{L^1}$. The $v_i$'s can be absorbed into the supremum over $\by$, and so it remains to study 
$$\int_{[-2,2]}e(\pm \theta x_i^2 - y_ix_i)\mathrm{d}x_i.$$
We can write the integrand as $e(\pm\theta \Phi(x_i))$, where $\Phi(x_i) = x_i^2 + x_iy_i\theta^{-1}$. Then $|\Phi''(x_i)|\geq 1$ throughout the interval $[-2,2]$, and so we can apply the second derivative test as found in \cite[Ch. 8, Proposition 2.3]{MR2827930} to bound this integral by $|\theta|^{-1/2}$. Therefore
$$I(\bv;\theta) \ll \|\hat{w_3}\|_{L_1}|\theta|^{-2},$$
Returning to (\ref{fourier inversion}), the contribution to $I_{q,\ba}(\bc)$ from the range $5|\theta| \geq |\bv|$ is bounded by
$$\|\hat{w_3}\|_{L_1}P_1\cdots P_4 \int_{|\theta|\gg |\bv|}|\theta|^{-2} \mathrm{d}\theta\ll \|\hat{w_3}\|_{L_1}P_1\cdots P_4 |\bv|^{-1}.$$

Since 
$$\frac{P_1\cdots P_4}{|\bv|} = \frac{B^{3/2}q}{|A|^{1/2}}\min_{1\leq i\leq 4}\left(\frac{|a_i|^{1/2}}{|c_i|}\right),$$
it remains only to show that $\|\hat{w}_3\|_{L^1} \ll \eta^{-4}$. We have 
$$\|\hat{w}_3\|_{L^1} = \int_{|\bx| \leq \eta^{-1}}|\hat{w_3}(\by)|\,\mathrm{d}\by + \int_{|\bx| \geq \eta^{-1}}|\hat{w_3}(\by)|\,\mathrm{d}\by.$$
The first integral is trivially $O(\eta^{-4})$. For the second integral, we have 
$$ \int_{|\bx|\geq \eta^{-1}}|\hat{w_3}(\by)|\,\mathrm{d}\by \ll  \int_{\substack{|\bx|\geq \eta^{-1}\\ |y_1| = \max_i|y_i|}}|\hat{w_3}(\by)|\,\mathrm{d}\by.$$
We can now apply integration by parts five times with respect to $x_1$ to obtain
\begin{align*}
    \int_{|\by|\geq \eta^{-1}}|\hat{w_3}(\by)|\,\mathrm{d}\by &\ll \int_{\substack{|\by|\geq \eta^{-1}\\|y_1|=\max_i|y_i|}}\left|\int_{\R^4}\frac{\del^5w_3(\bx)}{\del x_1^5}\frac{e(-\bx \cdot \by)}{(-2\pi i y_1)^5}\mathrm{d}\bx\right|\mathrm{d}\by\\
    &\ll \eta^{-5}\int_{
    \substack{|y_1|\geq \eta^{-1}\\
    |y_2|,|y_3|,|y_4|\leq |y_1|}}y_1^{-5}\mathrm{d}\by\\
    &\ll \eta^{-5}\int_{y_1\geq \eta^{-1}}y_1^{-2}\,\mathrm{d}y_1\\
    &\ll \eta^{-4}.\qedhere
\end{align*}
\end{proof}

We recall the notation $C_i=\eta^{-1}B^{\eps}|a_i|^{1/2}$ from Remark \ref{remarkably}. We are now ready to estimate the quantity 
\begin{equation}\label{eab}
E_{\ba}(B) = \frac{1}{Q^2}\sum_{\substack{\bc \in \Z^4\bs\{\0\}\\ |c_i|\ll C_i}}\sum_{q=1}^{\infty}q^{-4}S_{q,\ba}(\bc)I_{q,\ba}(\bc).
\end{equation}
Note that $I_{q,\ba}(\bc)$ vanishes for $q\gg Q$ by the properties of the function $h$, and so we may restrict the $q$-sum to a sum over $q\ll Q$. We recall the notation $\Sigma(x;\bc) = \sum_{q \leq x}S_{q,\ba}(\bc)$. We would like to use \cref{modified version of lemma 4.2 ii)} together with the bound for $\Sigma(x;\bc)$ from \cite[Lemma 4.7]{browningbundle} to estimate the inner sum of (\ref{eab}). Unfortunately, \cite[Lemma 4.7]{browningbundle} requires the dual form $F_{\ba}^*(\bc)$ not to vanish. This was always true in \cite[Section 4]{browningbundle} due to the assumptions made in the setup, but we must treat this case separately. To this end, we let $E_{\ba}(B) = E_{\ba,1}(B) + E_{\ba,2}(B)$, where in $E_{\ba,1}(B)$ we add the restriction $F_{\ba}^*(\bc) \neq 0$ to the $\bc$-sum and in $E_{\ba,2}(B)$ we sum over the remaining values of $\bc$ where $F_{\ba}^*(\bc) = 0$. 

\subsubsection{Analysis of $E_{\ba,1}(B)$}

Using Lemma \ref{modified version of lemma 4.2 ii)}, we have
$$E_{\ba,1}(B) \ll \frac{\eta^{-4}B^{1/2+\eps}}{|A|^{1/2}}\sum_{\substack{\bc \in \Z^4\bs\{\0\}\\ |c_i|\ll C_i}} \min_{1\leq i\leq 4}\left(\frac{|a_i|^{1/2}}{|c_i|}\right)\sum_{q\ll Q}q^{-3}S_{q,\ba}(\bc).$$
For a fixed choice of $\bc$, let $j(\bc)\in\{1,\ldots,4\}$ denote the index where $|a_i|^{1/2}|c_i|^{-1}$ is minimized. Note in particular that $c_{j(\bc)}\neq 0$. We have
$$E_{\ba,1}(B) \ll \frac{\eta^{-4}B^{1/2+\eps}}{|A|^{1/2}}\sum_{j=1}^4|a_j|^{1/2}\sum_{\substack{\bc \in \Z^4\bs\{\0\}\\ |c_i|\ll C_i\\j(\bc) = j}} \frac{1}{|c_j|}\sum_{q\ll Q}q^{-3}S_{q,\ba}(\bc).$$
An application of \cite[Lemma 4.7]{browningbundle} with $x = Q$ yields
$$\sum_{\substack{\bc \in \Z^4\bs\{\0\}\\ |c_i|\ll C_i\\j(\bc) = j}} \frac{1}{|c_j|}\sum_{q\ll Q}q^{-3}S_{q,\ba}(\bc)\ll  B^{\eps}\sum_{\substack{\bc \in \Z^4\bs\{\0\}\\ |c_i|\ll C_i}}\frac{1}{|c_j|}\prod_{i=1}^4\gcd(a_i, c_i)^{1/2}.$$
For $i\neq j$, we have 
$$\sum_{0\leq|c_i| \ll C_i}\gcd(a_i, c_i)^{1/2} \ll C_iB^{\eps},$$
and for $i=j$, we have 
$$\sum_{0<|c_j| \ll C_j}\frac{\gcd(a_j, c_j)^{1/2}}{|c_j|} \ll B^{\eps}.$$
Hence
$$E_{\ba,1}(B) \ll \frac{\eta^{-4}B^{1/2+\eps}}{|A|^{1/2}}\sum_{j=1}^4 |a_j|^{1/2}\prod_{i\neq j}C_i\ll \eta^{-7}B^{1/2+\eps}.$$

\subsubsection{Analysis of $E_{\ba,2}(B)$}\label{E2P}

We write $E_{\ba,2}(B) = E_{\ba,2}^{(1)}(B) + E_{\ba,2}^{(2)}(B)$, where
\begin{align*}
E_{\ba,2}^{(1)}(B)  &= \frac{1}{B}\sum_{\substack{\bc \in \Z^4\bs\{\0\}\\ |c_i| \ll C_i\\F_{\ba}^*(\bc)= 0}}\sum_{q\leq M}q^{-4}S_{q,\ba}(\bc)I_{q,\ba}(\bc),\\
E_{\ba,2}^{(2)}(B)  &= \frac{1}{B}\sum_{\substack{\bc \in \Z^4\bs\{\0\}\\ |c_i| \ll C_i\\F_{\ba}^*(\bc)= 0}}\sum_{M\leq q\ll Q}q^{-4}S_{q,\ba}(\bc)I_{q,\ba}(\bc),
\end{align*}
For a real parameter $M$ to be determined later. To bound $E_{\ba,2}^{(1)}(B)$, we apply \cref{modified version of lemma 4.2 ii)} to obtain
$$E_{2,a}(P) \ll \frac{\eta^{-4}B^{1/2+\eps}}{|A|^{1/2}}\sum_{j=1}^4\sum_{\substack{\bc \in \Z^4\bs\{\0\}\\  |c_i|\ll C_i\\j(\bc) = j}}    \frac{|a_j|^{1/2}}{|c_j|}  \sum_{q\leq M}q^{-3}|S_{q,\ba}(\bc)|.$$
By \cite[Lemma 4.5]{browningbundle}, we have $|S_{q,\ba}(\bc)| \ll q^3\prod_{i=1}^4 \gcd(a_i, c_i)^{1/2}$, and hence summing trivially over $q$ and proceeding as before,  we obtain 
$$E_{\ba,2}^{(1)}(B) \ll \eta^{-7}B^{1/2+\eps}M.$$

To bound $E_{\ba,2}^{(2)}(B)$ we require some cancellation from the sum over $q$. To achieve this, we use summation by parts, as in the treatment of the main term in Proposition \ref{asymptotics for main term}, and then apply Lemma \ref{refined lemma 4.9} to obtain a better estimate for the exponential sums. We also perform the $q$-sum over $R\leq q\leq 2R$ and later take a dyadic sum over $M\leq R \ll Q$. Summation by parts yields
\begin{equation}
 \sum_{R\leq q \leq 2R}q^{-4}S_{q,\ba}(\bc)I_{q,\ba}(\bc) \ll -\Sigma(R;\bc)\frac{I_{R,\ba}(\bc)}{R}-\int_{R}^{2R} \Sigma(x;\bc)\frac{\del}{\del x}\left(\frac{I_{x,\ba}(\bc)}{x}\right) \,\mathrm{d}x.\label{partial summation for E2P}   
\end{equation}
Therefore, from Lemma \ref{lemma 4.3*}, we obtain
\begin{equation}\label{5/8}
 \frac{1}{B}\sum_{R\leq q \leq 2R}q^{-4}S_{q,\ba}(\bc)I_{q,\ba}(\bc) \ll \frac{\eta^{-1}P_1\cdots P_4}{BR}\sup_{R \leq x \leq 2R}|\Sigma(x;\bc)|.
\end{equation}
We recall the definition of $\Delta_{\bc}(\ba)$ from (\ref{sneaky delta def}). Using the estimate for $|\Sigma(x;\bc)|$ from Lemma \ref{refined lemma 4.9}, we have
\begin{align}
    E_{\ba,2}^{(2)}(B)&\ll \frac{\eta^{-1}P_1\cdots P_4|A|^{3/16}}{B}\sum_{\substack{R \textrm{ dyadic}\\M\leq R \ll Q}}R^{-1/2+\eps}\sum_{\substack{\bc \in \Z^4\bs\{\0\}\\ |c_i| \ll C_i\\F_{\ba}^*(\bc)= 0}}\Delta_{\bc}(\ba)^{3/8}\nonumber\\
    &\ll \frac{\eta^{-1}B^{1+\eps}}{M^{1/2}|A|^{5/16}}\sum_{\substack{\bc \in \Z^4\bs\{\0\}\\ |c_i| \ll C_i\\F_{\ba}^*(\bc)= 0}}\Delta_{\bc}(\ba)^{3/8}.\label{hard case}
\end{align}

It is possible to obtain the asymptotic formula from Theorem \ref{the main theorem} (albeit with a smaller power saving) by bounding the sum in (\ref{hard case}) trivially by $C_1\cdots C_4 \Delta^{3/8}$. However, in order to obtain the error terms claimed in Theorem \ref{theorem 1.4 again}, we now make use of the constraint $F^*_{\ba}(\bc)=0$ in the $\bc$-sum to obtain a more refined estimate. In the following two lemmas, we adopt the notation that $\bv\bw = (v_1w_1,\ldots, v_4w_4)$ for vectors $\bv,\bw \in \Z^4$. 

\begin{lemma}\label{basic property of delta}
For any $\bm{d},\bm{e} \in (\Z_{\neq 0})^4$, we have
$$\Delta(\bm{d}\bm{e}) \leq (d_1\cdots d_4)^2\Delta(\bm{e}). $$
\end{lemma}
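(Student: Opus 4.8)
The plan is to pass to $p$-adic valuations and reduce everything to a short estimate about nonnegative integers. Since $\Delta(\bm{d}\bm{e})$ and $(d_1\cdots d_4)^2\Delta(\bm{e})$ are positive integers, it suffices to prove
$$\nu_p\bigl(\Delta(\bm{d}\bm{e})\bigr) \;\leq\; 2\,\nu_p(d_1 d_2 d_3 d_4) + \nu_p\bigl(\Delta(\bm{e})\bigr)$$
for every prime $p$; summing over $p$ then yields divisibility, hence the claimed inequality.

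The main preliminary step I would carry out is an exact formula for $\nu_p(\Delta(\bm a))$. Fix $p$ and $\bm a\in(\Z_{\neq 0})^4$, write $\alpha_i=\nu_p(a_i)$, $\Sigma=\sum_i\alpha_i$, $m=\max_i\alpha_i$. Then $\nu_p\!\left(\gcd\left(a_i,\prod_{j\neq i}a_j\right)\right)=\min(\alpha_i,\Sigma-\alpha_i)$, which equals $\alpha_i$ for every index with $2\alpha_i\leq\Sigma$ and equals $\Sigma-\alpha_i$ otherwise; since at most one index can have $2\alpha_i>\Sigma$, summing over $i$ and splitting into the cases $2m\leq\Sigma$ and $2m>\Sigma$ gives
$$\nu_p\bigl(\Delta(\bm a)\bigr) \;=\; \Sigma - \max(0,\,2m-\Sigma).$$
This automatically covers ties for the maximum, since a tie forces $2m\leq\Sigma$ and both sides equal $\Sigma$.

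To conclude, set $\beta_i=\nu_p(e_i)$ and $\gamma_i=\nu_p(d_ie_i)=\alpha_i+\beta_i$, and write $\Sigma_\alpha,\Sigma_\beta,\Sigma_\gamma=\Sigma_\alpha+\Sigma_\beta$ and $\beta_{\max},\gamma_{\max}$ for the corresponding coordinate sums and maxima. Choosing an index $k$ with $\beta_k=\beta_{\max}$ gives $\gamma_{\max}\geq\gamma_k=\alpha_k+\beta_{\max}$, whence
$$2\gamma_{\max}-\Sigma_\gamma \;\geq\; 2\alpha_k+2\beta_{\max}-\Sigma_\alpha-\Sigma_\beta \;\geq\; (2\beta_{\max}-\Sigma_\beta)-\Sigma_\alpha,$$
using $\alpha_k\geq0$. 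Applying $\max(0,\cdot)$ to both sides and rearranging yields $\Sigma_\alpha+\max(0,2\gamma_{\max}-\Sigma_\gamma)\geq\max(0,2\beta_{\max}-\Sigma_\beta)$, so by the valuation formula
$$\nu_p\bigl(\Delta(\bm{d}\bm{e})\bigr) = \Sigma_\alpha+\Sigma_\beta-\max(0,2\gamma_{\max}-\Sigma_\gamma) \leq 2\Sigma_\alpha + \bigl(\Sigma_\beta-\max(0,2\beta_{\max}-\Sigma_\beta)\bigr) = 2\,\nu_p(d_1d_2d_3d_4)+\nu_p\bigl(\Delta(\bm{e})\bigr),$$
which is exactly what is needed.

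I do not expect any genuine obstacle here: the argument is entirely elementary. The only points requiring a little care are getting the formula for $\nu_p(\Delta)$ exactly right (in particular the behaviour at indices tied for the maximum) and keeping straight that $\gamma_{\max}$ is to be compared with the $\gamma$-coordinate at the index where $\beta$ is maximal. The factor $2$ on $(d_1\cdots d_4)$ is precisely the cost of the bound $2\alpha_k\geq0$, and it is sharp: the example $\bm{d}=(p^2,1,1,1)$, $\bm{e}=(1,p^2,1,1)$ shows it cannot be replaced by a first power.
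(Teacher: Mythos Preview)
Your proof is correct and follows essentially the same route as the paper: reduce to a single prime, compute $\nu_p(\Delta)$ in terms of the valuations of the coordinates, and verify the required inequality by elementary manipulations. The only cosmetic difference is that you first isolate the closed formula $\nu_p(\Delta(\bm a))=\Sigma-\max(0,2m-\Sigma)$ and then argue with $\max(0,\cdot)$, whereas the paper orders the $\varepsilon_i$ and handles the two cases $\varepsilon_1+\varepsilon_2+\varepsilon_3\lessgtr\varepsilon_4$ directly; these are the same case split in different clothing.
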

\begin{proof}
Fix a prime $p$, and define $\delta_i = \nu_p(d_i), \varepsilon_i = \nu_p(e_i)$ for $i\in \{1,\ldots, 4\}$. Without loss of generality, we may assume that $\varepsilon_1\leq \cdots \leq \varepsilon_4$. Then
\begin{align*}
\nu_p(\Delta(\bm{d}\bm{e}))&= \sum_{i=1}^4 \min\left(\delta_i+\varepsilon_i,\sum_{j \neq i}(\delta_j+\varepsilon_j)\right)\\
&\leq \sum_{i=1}^3(\delta_i+\varepsilon_i)+\min\left(\delta_4+\varepsilon_4,\sum_{i=1}^3(\delta_i+\varepsilon_i)\right),
\end{align*}
and
$$\nu_p(\Delta(\bm{e})) = \sum_{i=1}^3 \varepsilon_i+\min\left(\varepsilon_4, \sum_{i=1}^3 \varepsilon_i\right).$$
Therefore
\begin{align*}\nu_p(\Delta(\bm{d}\bm{e}))-\nu_p(\Delta(\bm{e})) &\leq 
\begin{cases}
\sum_{i=1}^3 2\delta_i, &\textrm{ if } \varepsilon_1+\varepsilon_2+\varepsilon_3 \leq \varepsilon_4,\\
\sum_{i=1}^4 \delta_i, &\textrm{ otherwise.}
\end{cases}\\
&\leq \nu_p((d_1\cdots d_4)^2).
\end{align*}
Taking a product over all primes $p$ completes the proof of the lemma.
\end{proof}

\begin{lemma}
We have 
$$\sum_{\substack{\bc \in \Z^4\bs\{\0\}\\ |c_i| \ll C_i\\F_{\ba}^*(\bc)= 0}}\Delta_{\bc}(\ba)^{3/8}\ll \eta^{-3}B^{\eps}\Delta^{1/2}.$$

\end{lemma}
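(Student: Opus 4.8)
The plan is to split the $\bc$-sum according to the tuple of greatest common divisors, reduce to a point count on a reduced diagonal quadric, and finish with an elementary sum over that tuple. For $\bc$ occurring in the sum, set $g_i = \gcd(a_i,c_i)$ and $\bm{g} = (g_1,\ldots,g_4)$; then $\Delta_{\bc}(\ba) = \Delta(\bm{g})$, and since $g_i \mid a_i$ and $|\ba| \ll B^{R}$ there are only $O(B^{\eps})$ possibilities for $\bm{g}$. Writing $c_i = g_i\gamma_i$ and $\tilde a_i = a_i/g_i$, the condition $F^{*}_{\ba}(\bc)=0$ becomes $\tilde F(\bgamma) := \sum_{i=1}^{4}\bigl(g_i\prod_{j\neq i}\tilde a_j\bigr)\gamma_i^{2} = 0$, a nonsingular diagonal quaternary form, and the box becomes $|\gamma_i| \le T_i := C_i/g_i = \eta^{-1}B^{\eps}(|a_i|/g_i^{2})^{1/2}$. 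A short computation gives
$$\prod_{i=1}^{4}T_i = \frac{\eta^{-4}B^{O(\eps)}|A|^{1/2}}{g_1\cdots g_4}, \qquad |\det\tilde F| = \frac{|A|^{3}}{(g_1\cdots g_4)^{2}}.$$
It therefore suffices to bound $N(\bm{g}) := \#\{\bgamma \in (\Z\setminus\{0\})^{4} : \tilde F(\bgamma)=0,\ |\gamma_i|\le T_i\}$, the analogous count with some $\gamma_i = 0$, and then to sum $\Delta(\bm{g})^{3/8}N(\bm{g})$ over $\bm{g}$.

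For $N(\bm{g})$ I would interpolate between two estimates. On one hand, Comtat's uniform bound for zeros of a nonsingular quadratic form in an arbitrary box \cite[Theorem 1.2]{comtat2019uniform} gives $N(\bm{g}) \ll B^{\eps}\bigl(1 + \prod_i T_i/|\det\tilde F|^{1/4}\bigr) = B^{\eps}\bigl(1 + \eta^{-4}|A|^{-1/4}(g_1\cdots g_4)^{-1/2}\bigr)$, together with terms governed by the lower-dimensional coordinate sections of $\tilde F$. On the other hand, fixing the two $\gamma_i$ lying in the shortest intervals and using that the complementary binary form then takes a determined value — which has $O(B^{\eps})$ representations by the divisor bound when it is nonzero, and forces $\tilde F$ to split along a pairing when it is zero — yields $N(\bm{g}) \ll B^{\eps}\bigl(1 + (\prod_i T_i)^{1/2}\bigr) = B^{\eps}\bigl(1 + \eta^{-2}|A|^{1/4}(g_1\cdots g_4)^{-1/2}\bigr)$ up to degenerate terms. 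Since $\min\bigl(\eta^{-4}|A|^{-1/4},\,\eta^{-2}|A|^{1/4}\bigr) \le \eta^{-3}$ for every value of $|A|$, away from the degenerate loci this gives $N(\bm{g}) \ll \eta^{-3}B^{\eps}(g_1\cdots g_4)^{-1/2}$. The degenerate contributions — solutions with a coordinate forced to vanish (which drop $\tilde F$ to a ternary or binary form) and solutions on the coordinate subspaces along which $\tilde F$ is isotropic — I would treat by the same dichotomy in fewer variables; in each such case the splitting imposes divisibility relations among the $g_i$ and the $a_j$, and Lemma \ref{basic property of delta} is exactly the inequality that lets one bound $\Delta(\bm{g})$ against the (then smaller) restricted count, so that each degenerate piece still contributes $\ll \eta^{-3}B^{\eps}\Delta^{1/2}$.

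For the generic part it then remains to estimate $\sum_{\bm{g}:\,g_i\mid a_i}\Delta(\bm{g})^{3/8}(g_1\cdots g_4)^{-1/2}$. This sum factors over primes, and for a prime $p$ with $\alpha_i = \nu_p(a_i)$ and $0 \le e_i \le \alpha_i$ one has $\nu_p(\Delta(\bm{g})) = \sum_i\min\bigl(e_i,\sum_{j\neq i}e_j\bigr) \le \frac{4}{3}\sum_i e_i$, by the same elementary case analysis that underlies Lemma \ref{basic property of delta}; hence $\frac{3}{8}\nu_p(\Delta(\bm{g})) - \frac{1}{2}\sum_i e_i \le 0$, with strict inequality unless all $e_i = 0$. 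Each local factor is therefore $1 + O(p^{-c})$ for an absolute $c > 0$, the product over the $O(B^{\eps})$ primes dividing $A$ is $\ll B^{\eps}$, and the generic part is $\ll \eta^{-3}B^{\eps}$. Combining this with the degenerate contributions yields the asserted bound $\ll \eta^{-3}B^{\eps}\Delta^{1/2}$ (the factor $\Delta^{1/2}$ being needed only to absorb the degenerate pieces).

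The hard part will be the point count of the second paragraph: producing a bound for the zeros of $\tilde F$ in the skew box $\prod_i[-T_i,T_i]$ that is uniform simultaneously in $\ba$, $\bm{g}$ and $\eta$ and that loses nothing in the various degenerate configurations. The generic regime is handled cleanly by interpolating between Comtat's estimate and the divisor bound; the real work lies in the coordinate subspaces where $\tilde F$ becomes isotropic, where one must carefully match the restricted count against the size of $\Delta(\bm{g})$ by means of Lemma \ref{basic property of delta}.
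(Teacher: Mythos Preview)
Your approach diverges substantially from the paper's, and the central step does not go through as written. The interpolation hinges on two point-count bounds for $N(\bm{g})$, but the first one is misattributed: according to the paper's own description of \cite[Theorem~1.2]{comtat2019uniform}, Comtat's estimate applied to $N_{\ba}(B)$ yields $B/|A|^{1/4}$, which in box-variables is $(\prod_i P_i)^{1/2}$, \emph{not} $\prod_i P_i/|\det|^{1/4}$. Transported to your $\tilde F$ and $T_i$, Comtat therefore gives exactly your ``second'' bound $\eta^{-2}|A|^{1/4}(g_1\cdots g_4)^{-1/2}$, so you only have one estimate and the minimum no longer produces $\eta^{-3}$. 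With $\Delta=1$ (pairwise coprime $a_i$) and $|A|$ a genuine power of $B$, the surviving bound $\eta^{-2}|A|^{1/4}$ is not $\ll \eta^{-3}B^{\eps}\Delta^{1/2}$, so the generic part already fails. The local-factor argument also needs repair: the Euler factors are not $1+O(p^{-c})$ uniformly (small primes dividing $A$ give bounded but not near-$1$ factors), though bounding each factor by $\prod_i(\nu_p(a_i)+1)$ does salvage a $B^{\eps}$ total.

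The paper avoids all of this by exploiting an arithmetic feature of the dual form that your decomposition by $g_i=\gcd(a_i,c_i)$ does not see. Writing $a_i=e_in_ik_i^2$ with $e_i=\gcd(a_i,\prod_{j\neq i}a_j)$ and $n_i=\sqf(a_i/e_i)$, the condition $F_{\ba}^{*}(\bc)=0$ forces $n_ik_i\mid c_i$ for every $i$; this divisibility shrinks the effective box for $c_i$ from $C_i$ to $C_i/(n_ik_i)=\eta^{-1}B^{\eps}(e_i/n_i)^{1/2}$, which is precisely how the factor $\Delta^{1/2}=\prod_i e_i^{1/2}$ enters. One then simply fixes three coordinates (the fourth being determined up to sign on the quadric), splits according to the number of vanishing $c_i$, and handles the weight $\Delta_{\bc}(\ba)^{3/8}$ by summing over the possible $d_i=\gcd(a_i,c_i)$ and invoking Lemma~\ref{basic property of delta} together with the key identity $\Delta(\bm{n}\bk)=1$. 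No external point-count input (Comtat or otherwise) is needed, and the degenerate coordinate subspaces, which you flag as the hard part, are dispatched in a few lines by the same mechanism.
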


\begin{proof}
For a nonzero integer $m$, we define $\sqf(m)$ to be the smallest positive integer $r$ such that $|m|/r$ is a square. For $i\in \{1,\ldots, 4\}$, we decompose $a_i$ into a product $a_i = e_in_ik_i^2$, where 
$$ e_i = \gcd\left(a_i, \prod_{j\neq i}a_j\right),\quad n_i = \sqf(a_i/e_i), \quad k_i^2 = a_i/(e_in_i).$$
By definition, we have $\prod_{i=1}^4 e_i = \Delta$. For any $i\in \{1,\ldots, 4\}$, it is clear that
\begin{equation}\label{dealing with a dual}
    F^*_{\ba}(\bc) = 0 \implies a_i| c_i^2\prod_{j\neq i}a_j  \implies n_ik_i^2|c_i^2 \implies n_ik_i|c_i.
\end{equation}

We divide up the sum over $\bc$ according to how many of the coordinates $c_1,\ldots,c_4$ are equal to zero. At most two of the coordinates can be zero, due to the assumptions $\bc\neq 0$ and $F^*_{\ba}(\bc)=0$. Up to reordering the indices, we obtain
\begin{equation}\label{how many of you are zero}\sum_{\substack{\bc \in \Z^4\bs\{\0\}\\ |c_i| \ll C_i\\F_{\ba}^*(\bc)= 0}}\Delta_{\bc}(\ba)^{3/8}\ll T_0+T_1+T_2,\end{equation}
where
\begin{align*}
    T_0 = \sum_{\substack{\bc \in (\Z_{\neq 0})^4\\ |c_i|\ll C_i\\F_{\ba}^*(\bc)= 0}}\Delta_{\bc}(\ba)^{3/8},
    T_1 =\sum_{\substack{\bc \in (\Z_{\neq 0})^3\\ |c_i| \ll C_i\\F_{\ba}^*((\bc,0))= 0}}\Delta_{(\bc,0)}(\ba)^{3/8},
    T_2=\sum_{\substack{\bc \in (\Z_{\neq 0})^2\\ |c_i| \ll C_i\\F_{\ba}^*((\bc,0,0))= 0}}\Delta_{(\bc,0,0)}(\ba)^{3/8}
\end{align*}
We begin by studying $T_0$. Given that $F^*_{\ba}(\bc)=0$, we only need to take the sum over $c_1,c_2$ and $c_3$, because this implicitly determines (up to sign) the value of $c_4$. Together with (\ref{dealing with a dual}), this implies that
\begin{align}
    T_0 &\leq \sum_{\substack{c_1,c_2,c_3 \in \Z_{\neq 0}\\ |c_i| \ll C_i\\n_ik_i|c_i}}\Delta_{\bc}(\ba)^{3/8}\\
    &\leq \sum_{\substack{c_1,c_2,c_3 \in \Z_{\neq 0}\\ |c_i| \ll C_i/(n_ik_i)}}\prod_{i=1}^4\gcd\left(n_ik_i\gcd(a_i,c_i),\prod_{j\neq i}n_jk_j\gcd(a_j,c_j)\right)^{3/8}\nonumber\\
    &\leq \sum_{\substack{\bm{d}\in (\Z_{\neq 0})^4\\d_i \ll C_i/(n_ik_i), \\d_i|a_i}}\sum_{\substack{c_1,c_2,c_3 \in \Z_{\neq 0}\\ |c_i| \ll C_i/(n_ik_i)\\d_i|c_i}}\Delta(\bm{d}\bm{n}\bk)^{3/8}\nonumber\\
    &\ll \left(\prod_{i=1}^3 \frac{C_i}{n_ik_i}\right)\sum_{\substack{\bm{d}\in (\Z_{\neq 0})^4\\d_i \ll C_i/(n_ik_i)\\d_i|a_i}}\Delta(\bm{n}\bk)^{3/8}\prod_{i=1}^4d_i^{-1/4}\label{the gcd is just 1},
\end{align}
where in the last line we have applied Lemma \ref{basic property of delta}. However, $\Delta(\bm{n}\bk)=1$ by the definition of $e_1,\ldots, e_4$. There are $O(A^{\eps})$ choices for $\bm{d}$ in the sum in (\ref{the gcd is just 1}), and each summand is bounded by $1$. Therefore 
\begin{equation}\label{T0}
T_0 \ll \left(\prod_{i=1}^3 \frac{C_i}{n_ik_i}\right)A^{\eps}.
\end{equation}
Returning to (\ref{T0}) and recalling that $C_i = \eta^{-1}B^{\eps}|a_i|^{1/2}=\eta^{-1}B^{\eps}(e_in_i)^{1/2}k_i$, we conclude that 
\begin{align*}
T_0 &\ll \eta^{-3}B^{\eps}\left(\prod_{i=1}^3\frac{|a_i|^{1/2}}{n_ik_i}\right)\\
&\leq \eta^{-3}B^{\eps}\Delta^{1/2}.
\end{align*}
 
The approach for estimating $T_1$ and $T_2$ is very similar, so we will focus on the main differences. It is sufficient to only use the divisibility conditions from (\ref{dealing with a dual}) in these cases. For $T_1$, all the sums in the above argument will be over vectors indexed by $\{1,2,3\}$, because we have fixed $c_4=0$. We have
\begin{align*}T_1 &\ll \left(\prod_{i=1}^3 \frac{C_i}{n_ik_i}\right)A^{\eps}\Delta(n_1k_1,n_2k_2,n_3k_3,a_4)^{3/8}\\
&= \left(\prod_{i=1}^3 \frac{C_i}{n_ik_i}\right)A^{\eps}\\
&\ll \eta^{-3}B^{\eps}\Delta^{1/2}.
\end{align*}

For $T_2$, we only take sums over vectors indexed by $\{1,2\}$, since we have fixed $c_3=c_4=0$. We have 
\begin{align*}T_2 &\ll \left(\prod_{i=1}^2 \frac{C_i}{n_ik_i}\right)A^{\eps}\Delta(n_1k_1,n_2k_2,a_3,a_4)^{3/8}\\
&= \left(\prod_{i=1}^2 \frac{C_i}{n_ik_i}\right)A^{\eps}(a_3,a_4)^{3/4}\\
&\leq \eta^{-2}B^{\eps}(e_1e_2)^{1/2}(e_3e_4)^{3/8}\\
&\ll \eta^{-2}B^{\eps}\Delta^{1/2}.
\end{align*}
Thus each of $T_0,T_1,T_2$ is bounded by $\eta^{-3}B^{\eps}\Delta^{1/2}$, as required. 
\end{proof}

Recalling (\ref{hard case}), we therefore have

$$E_{2,\ba}^{(2)}(B)\ll \frac{\eta^{-4}B^{1+\eps}\Delta^{1/2}}{M^{1/2}|A|^{5/16}}.$$
To combine with the error term $E_{2,\ba}^{(1)}(B)$, we make the choice 
$$M = \eta B^{1/3}\Delta^{1/3}|A|^{-5/24}.$$
This yields the estimate
\begin{equation}\label{final c nonzero error term}
E_{\ba}(B)\ll \frac{\eta^{-6}B^{5/6+\eps}\Delta^{1/3}}{|A|^{5/24}}+\eta^{-7}B^{1/2+\eps}.\end{equation}
This estimate is larger than the error term from Proposition \ref{asymptotics for main term}.  Combining with Proposition \ref{asymptotics for main term} this completes the proof of Theorem \ref{the main smooth weight result}. 

The final ingredient in the proof of Theorem \ref{theorem 1.4 again} is an estimate for the singular series $\mathfrak{G}_{\ba}$. 

\begin{lemma}\label{the one and only singular series estimate} Let $\ba \in (\Z_{\neq 0})^4$ and assume that $A\neq \square$. Then

$$|\mathfrak{G}_{\ba}|\ll |A|^{\eps}\Delta^{1/4}.$$
\end{lemma}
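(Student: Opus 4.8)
The plan is to exploit the multiplicativity of $q \mapsto S_{q,\ba}(\0)$, established by Heath-Brown following \cite[Lemma 28]{MR1421949}, to reduce the estimate to a local (prime-by-prime) computation. Writing $\mathfrak{G}_{\ba} = \prod_p \sigma_p$ where $\sigma_p = \sum_{k\geq 0} p^{-4k} S_{p^k,\ba}(\0)$, it suffices to show $|\sigma_p| \leq (1 + O(p^{-1-\delta}))$ for primes $p \nmid 2A$, and $|\sigma_p| \ll p^{\eps\nu_p(A)} p^{\nu_p(\Delta)/4}$ for the finitely many primes dividing $2A$; taking the product then gives the claim, since $\prod_{p|2A} p^{\eps\nu_p(A)} \ll |A|^{\eps}$ and $\prod_{p|2A} p^{\nu_p(\Delta)/4} = \Delta^{1/4}$. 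For $p \nmid 2A$ the form $F_{\ba}$ is nondegenerate mod $p$, so $S_{p,\ba}(\0)$ is a standard Gauss-sum evaluation giving $\sigma_p = 1 + \left(\tfrac{A}{p}\right)\tfrac{\phi(p)}{p}\cdot p^{-1} + \cdots$; in fact the cleanest route is to invoke \cite[Lemma 4.6]{browningbundle} exactly as in the proof of Lemma \ref{refined lemma 4.9}, which already identifies the good-prime part of the series with a partial sum of the $L$-series $\sum \left(\tfrac{A}{q_1}\right)\phi(q_1)/q_1$, convergent (with value $O(|A|^{\eps})$) precisely because $A \neq \square$ forces the character $\left(\tfrac{A}{\cdot}\right)$ to be nontrivial.

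Concretely I would mirror the decomposition in \eqref{using multiplicativity}: write $\mathfrak{G}_{\ba} = \bigl(\sum_{q_2 | (2A)^\infty} q_2^{-4} S_{q_2,\ba}(\0)\bigr) \cdot \bigl(\sum_{\gcd(q_1,2A)=1} q_1^{-4} S_{q_1,\ba}(\0)\bigr)$. The second factor equals $\sum_{\gcd(q_1,2A)=1}\left(\tfrac{A}{q_1}\right)\phi(q_1)q_1^{-2}$ by \cite[Lemma 4.6]{browningbundle}, and since $A \neq \square$ this is a convergent Euler product whose value is $\ll |A|^{\eps}$ (bounded by $L(1,\chi)$-type estimates, or crudely by partial summation against the Pólya–Vinogradov / Burgess bound for $\sum_{q_1 \leq x}\left(\tfrac{A}{q_1}\right)$, exactly as Burgess is used in Lemma \ref{refined lemma 4.9}). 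For the first factor, I would bound each term using \cite[Lemma 4.5]{browningbundle}, namely $S_{q,\ba}(\0) \ll q^3 \prod_{i=1}^4 \gcd(q, a_i)$: this gives $q_2^{-4}|S_{q_2,\ba}(\0)| \ll q_2^{-1}\prod_i \gcd(q_2,a_i)$, and summing over $q_2 | (2A)^\infty$ — of which there are only $O(|A|^\eps)$ up to the effective cutoff, the tail being negligible since each such term is $\ll q_2^{-1+\eps}$ once $q_2$ exceeds $|A|$ — reduces to a supremum over $q_2 | (2A)^\infty$ of $q_2^{-1}\prod_i \gcd(q_2, a_i)$. This supremum is estimated prime-by-prime just as in the proof of Lemma \ref{refined lemma 4.9}: setting $k_p = \nu_p(q_2)$ and $\mu_{i,p}$ the $p$-valuations of the $a_i$ in increasing order, the exponent of $p$ is $\sum_i \min(k_p,\mu_{i,p}) - k_p$, maximized (over $k_p$) at $k_p = \mu_{3,p}$ with value $\mu_{1,p}+\mu_{2,p}+\mu_{3,p}$, which one checks is $\leq \tfrac14\nu_p(\Delta)$ using $\nu_p(\Delta) = \sum_i \min(\mu_{i,p}, \sum_{j\neq i}\mu_{j,p}) \geq \mu_{1,p}+\mu_{2,p}+\mu_{3,p} + \min(\mu_{1,p}+\mu_{2,p}+\mu_{3,p},\mu_{4,p})$ — wait, more carefully, one needs $4(\mu_{1,p}+\mu_{2,p}+\mu_{3,p}) \leq \nu_p(\Delta)$, which follows since each of the four factors $\gcd(a_i,\prod_{j\neq i}a_j)$ has $p$-valuation $\geq \mu_{1,p}+\mu_{2,p}+\mu_{3,p}$ except possibly the one indexed by the minimal valuation — in any case the bookkeeping is identical to Lemma \ref{refined lemma 4.9} and yields $\Delta^{1/4}$.

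The main obstacle is the bad-prime bookkeeping: making sure the exponent $\mu_{1,p}+\mu_{2,p}+\mu_{3,p}$ arising from $\sup_{q_2}q_2^{-1}\prod_i\gcd(q_2,a_i)$ is genuinely $\leq \tfrac14\nu_p(\Delta)$, and that the optimal $k_p$ really is an interior point rather than something forcing a worse bound; this is the same computation already carried out in Lemma \ref{refined lemma 4.9} (there with exponent $\tfrac12$ against $\Delta^{1/3}$), so I would simply adapt that argument with the cruder input $S_{q,\ba}(\0)\ll q^3\prod\gcd(q,a_i)$ in place of the Burgess-refined bound. Everything else — multiplicativity, the Euler-product convergence from $A\neq\square$, the $O(|A|^\eps)$ divisor-type counts — is routine and parallels the proof of Lemma \ref{refined lemma 4.9} verbatim.
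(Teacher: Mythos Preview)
Your overall strategy --- factor $\mathfrak{G}_{\ba}$ via multiplicativity into a bad-prime part supported on $q_2\mid(2A)^\infty$ and a good-prime part, bound the latter by a convergent character series since $A\neq\square$, and estimate the former via \cite[Lemma~4.5]{browningbundle} --- is exactly the paper's approach. However, there is a genuine gap in your bad-prime calculation.

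You invoke the bound $S_{q,\ba}(\0) \ll q^3\prod_i\gcd(q,a_i)$, but the correct form of \cite[Lemma~4.5]{browningbundle} carries a square root: $S_{q,\ba}(\0) \ll q^3\prod_i\gcd(q,a_i)^{1/2}$. (The paper's statement of this bound in the proof of Lemma~\ref{refined lemma 4.9} drops the exponent $1/2$ by a typo, but the computations immediately following it, and the use in \S\ref{E2P}, all employ the $1/2$.) Without the square root, your supremum $\sup_{q\mid(2A)^\infty}q^{-1}\prod_i\gcd(q,a_i)$ has $p$-part $p^{\mu_{1,p}+\mu_{2,p}+\mu_{3,p}}$, and the inequality $4(\mu_{1,p}+\mu_{2,p}+\mu_{3,p}) \leq \nu_p(\Delta)$ that you would need is simply false: when $\mu_{1,p}=\cdots=\mu_{4,p}=1$ the left side is $12$ and the right side is $4$. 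Your attempted justification --- that most of the four factors $\gcd(a_i,\prod_{j\neq i}a_j)$ have $p$-valuation at least $\mu_{1,p}+\mu_{2,p}+\mu_{3,p}$ --- is also incorrect: their $p$-valuations are exactly $\mu_{1,p},\mu_{2,p},\mu_{3,p}$ and $\min(\mu_{4,p},\mu_{1,p}+\mu_{2,p}+\mu_{3,p})$.

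With the square root in place, the quantity to maximize becomes $\sum_i\min(k_p,\mu_{i,p})-2k_p$, which attains its maximum $\mu_{1,p}+\mu_{2,p}$ at $k_p=\mu_{2,p}$; and now $\nu_p(\Delta)\geq 2(\mu_{1,p}+\mu_{2,p})$ does hold (since $\mu_{3,p}\geq\mu_{2,p}$ and $\min(\mu_{4,p},\mu_{1,p}+\mu_{2,p}+\mu_{3,p})\geq\mu_{1,p}$), giving the exponent $\tfrac12(\mu_{1,p}+\mu_{2,p})\leq\tfrac14\nu_p(\Delta)$ and hence $\Delta^{1/4}$. So the fix is a one-symbol correction to the input bound, after which your argument goes through and coincides with the paper's.
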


\begin{proof} Beginning with the definition of the singular series, we have
$$\mathfrak{G}_{\ba}=\sum_{q=1}^{\infty}q^{-4}S_{q,\ba}(\0) = \sum_{q\leq T}q^{-4}S_{q,\ba}(\0) + \sum_{q>T}q^{-4}S_{q,\ba}(\0)$$
for any $T\geq 1$. For the sum over $q>T$, we apply partial summation and Lemma \ref{refined lemma 4.9}. We have
\begin{align*}
\sum_{q>T}q^{-4}S_{q,\ba}(\0) &= \frac{\Sigma(T;\0)}{T}-\int_{T}^{\infty}\Sigma(x;\0)\frac{\del}{\del x}(x^{-1})\,\mathrm{d}x\nonumber\\
&\ll T^{-1/2 + \eps}|A|^{3/16+\eps}\Delta^{3/8}.\nonumber
\end{align*}
By choosing $T=|A|^2$, we can ensure that the contribution from this region is negligible.

For the remaining sum over $q\leq T$, we follow a similar argument to the proof of Lemma \ref{refined lemma 4.9}. Using the multiplicativity of $S_{q,\ba}(\0)$ in $q$, we have
\begin{align*}\sum_{q \leq T}q^{-4}S_{q,\ba}(\0) &\ll \left( \sum_{\substack{q_1 \leq T\\\gcd(q_1,2A)=1}}\left(\frac{A}{q_1}\right)\frac{\phi(q_1)}{q_1^2}\right)\left( \sum_{\substack{q_2\leq T/q_1 \\ q_2|(2A)^{\infty}}}q_2^{-4}S_{q_2,\ba}(\0)\right)\\
&\ll T^{\eps}|A|^{\eps}\max_{q|(2A)^{\infty}}\left(\frac{\prod_{i=1}^4\gcd(q,a_i)^{1/2}}{q}\right),\end{align*}
where in the second line we have estimated the sum over $q_1$ trivially, and the sum over $q_2$ by applying \cite[Lemma 4.5]{browningbundle}. 
We let $m_{i,p}$ denote the $i$th smallest element of $\nu_p(a_1),\ldots, \nu_p(a_4)$. We define
$$L_p = \nu_{p}\left(\frac{\prod_{i=1}^4\gcd(q,a_i)}{q^2}\right),\quad k_p = \nu_p(q).$$ 
Then 
$$L_p = \sum_{j=1}^4 \min(k_p,m_{i,p}) - 2k_p.$$
The maximum possible value of $L_p$ is attained by choosing $k_p = m_{2,p}$, and so $L_p \leq m_{1,p}+ m_{2,p}$. We have
$$\nu_p(\Delta) = m_{1,p}+m_{2,p}+m_{3,p}+\min(m_{1,p}+m_{2,p}+m_{3,p},m_{4,p}) \geq 2(m_{1,p}+m_{2,p}),$$
and so $L_p \leq \nu_p(\Delta)/2$. Taking a product over $p|2A$ completes the proof of the lemma.
\end{proof}

\section{Proof of Theorem \ref{the main theorem}}\label{section proof of theorem 1.2}

We recall that the quantity $N_{\ba}(B)$ studied in Section \ref{section the circle method} did not involve any primitivity conditions. In order to insert the condition $\gcd(z_1, \ldots,z_4)=1$, we apply an inclusion-exclusion argument which is a special case of \cite[Section 3]{browning2019arithmetic}.

 We begin by fixing some notation which we will use throughout this section. Let $z_1,\ldots, z_4$ denote nonzero squareful numbers, and $x_1,\ldots,x_4 \in \N$, $y_1, \ldots, y_4 \in \Z_{\neq 0}$ the unique integers such that $z_i = x_i^2y_i^3$ and $y_i$ is square-free for all $i$. Let $A = a_1\cdots a_4, R=r_1\cdots r_4, S=s_1\cdots s_4$ and $Y = y_1\cdots y_4$. For vectors $\bv, \bw \in \N^4$, we write $\bv|\bw$ to mean $v_i|w_i$ for all $i=1,\ldots, 4$. For an integer $m$, we write $\bv|m$ for $v_i|m$ for all $i=1,\ldots, 4$, and $\gcd(m,\bv)$ for $(\gcd(m,v_1),\ldots, \gcd(m,v_4))$. We also define $\bv^{[m]}$ to be the vector in $\N^4$ with $i$th coordinate $v_i^{[m]} = \prod_{p|m}p^{\nu_p(v_i)}$, where $\nu_p$ denotes the $p$-adic valuation.

We recall from Proposition \ref{large coeffs are o(B)} that 
\begin{equation}\label{large error}
N(B) = N(D,B) + O(B^{1+\eps}D^{-1/12}),
\end{equation}
where $N(B)$ is defined in (\ref{main counting problem}) and $N(D,B)$ is defined in the same way but with the additional constraint $|Y|\leq D$. For $\br, \bms\in \N^4$ and $s_0\in \N$, we define
\begin{align*}
\mathcal{N}(B;\mathbf{r},\mathbf{s},s_0) =\left\{\mathbf{z} \in (\Z_{\neq 0})^4:
\begin{tabular}{l}
$|\mathbf{z}|\leq B, |Y|\leq D, Y \neq \square$,\\  
$z_1 + \cdots + z_4 = 0, \br|\by, \bms|\bx, s_0|\bx$
\end{tabular}
\right\}.
\end{align*}

\definition\label{omega} Given $\mathbf{r},\mathbf{s}\in \N^4$ and $s_0 \in \N$, we define $\omega(\br,\bms,s_0)$ as follows:

\begin{enumerate}
    \item $\omega(\br, \bms,s_0) = \mu(s_0)\prod_{p}\omega(\br^{[p]},\bms^{[p]},1)$. In particular, $\omega(\mathbf{1}, \mathbf{1},s_0) = \mu(s_0).$
    \item If one of $r_1, \ldots, r_4, s_1, \ldots, s_4$ is not square-free, then $\omega(\br, \bms,s_0) = 0.$
    \item If $\gcd(s_1, \ldots, s_4)>1$ then $\omega(\br,\bms,s_0)=0$.
    \item If $\gcd(s_0,\bms) \neq \1$, then $\omega(\br,\bms,s_0)=0$.
    \item If $p|RS$ but $p\nmid r_is_i$ for some $i$, then $\omega(\br^{[p]},\bms^{[p]},1)=0$.
    \item If $p|r_is_i$ for every $i$, $\gcd(s_1, \ldots, s_4)=1$, $\gcd(s_0,\bms)=1$, and $r_i,s_i$ are square-free for every $i$, then $k := \nu_p(RS)$ satisfies $4\leq k\leq 7$. Define $\omega(\br^{[p]}, \bms^{[p]},1) = (-1)^{k+1}$. 
     
\end{enumerate}

The motivation for this choice of $\omega$ comes from the following lemma.

\begin{lemma}\label{inclusion exclusion} We have

\begin{equation}\label{Mobius inversion statement}N(D,B) = \sum_{\substack{\br, \bms \in \N^4\\ s_0 \in \N}}\omega(\br, \bms, s_0)\#\mathcal{N}(B;\br, \bms,s_0).\end{equation}
\end{lemma}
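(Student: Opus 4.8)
## Proof proposal for Lemma \ref{inclusion exclusion}

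\textbf{Overview of the approach.} The plan is to prove the identity \eqref{Mobius inversion statement} by a Möbius-type inversion that strips away the coprimality condition $\gcd(z_1,\dots,z_4)=1$ implicit in $N(D,B)$, recovering it from the unconstrained counts $\#\mathcal{N}(B;\br,\bms,s_0)$. The key observation is that for a given squareful $z_i = x_i^2 y_i^3$ with $y_i$ square-free, a prime $p$ divides $\gcd(z_1,\dots,z_4)$ precisely when $p \mid x_i^2 y_i^3$ for all $i$; and the divisibility pattern of $p$ across the pairs $(x_i,y_i)$ is controlled by $\nu_p(r_i)$ and $\nu_p(s_i)$ when we write $\br \mid \by$, $\bms s_0 \mid \bx$. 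So the whole identity should factor as an Euler product over primes $p$, and it suffices to verify the local identity at each $p$: the coefficient of each ``bad'' local configuration (where $p$ divides every $z_i$) sums to zero, while the ``good'' configuration (where $p$ fails to divide some $z_i$) contributes weight $1$.

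\textbf{Key steps.} First I would substitute the definitions: expand $\#\mathcal{N}(B;\br,\bms,s_0)$ as a sum over $\bz$ (equivalently over $(\bx,\by)$ with $y_i$ square-free, $x_i \in \N$) satisfying the box, sum-zero and $Y\neq\square$ conditions, weighted by the indicator of $\br\mid\by$, $\bms\mid\bx$, $s_0\mid\bx$. Interchanging the order of summation, \eqref{Mobius inversion statement} becomes
\begin{equation*}
N(D,B) = \sum_{\bz} \Big(\sum_{\substack{\br\mid\by,\ \bms\mid\bx\\ s_0\mid\bx}} \omega(\br,\bms,s_0)\Big),
\end{equation*}
where the outer sum is over all $\bz$ satisfying the box, sum-zero, squareful and $Y\neq\square$ conditions (but \emph{not} necessarily primitive). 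So it remains to show that the inner sum over $(\br,\bms,s_0)$ equals $1$ if $\gcd(z_1,\dots,z_4)=1$ and $0$ otherwise. Second, using property (1) of Definition \ref{omega} (multiplicativity of $\omega$ in the ``$[p]$-components'' together with the $\mu(s_0)$ factor) and the fact that the divisibility conditions $\br\mid\by$, $\bms s_0\mid\bx$ factor over primes, I would write the inner sum as an Euler product $\prod_p \Sigma_p$, where $\Sigma_p$ ranges over the local exponents $\big(\nu_p(r_i),\nu_p(s_i),\nu_p(s_0)\big)$ compatible with $\bz$. Third, I would evaluate $\Sigma_p$ by splitting on whether $p \mid z_i$ for all $i$ or not. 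If $p\nmid z_j$ for some $j$: then $p\nmid r_j s_j$ and (by property (5)) any term with $p\mid RS$ vanishes, so only $\br^{[p]}=\bms^{[p]}=\1$ survives, together with $\nu_p(s_0)\in\{0,1\}$ contributing $\mu(1)+\mu(p)=1-1=0$ \emph{unless} we also need $p \nmid s_0$ — wait, here I must be careful: $s_0 \mid \bx$ means $s_0$ divides each $x_i$, so if some $x_j$ is a $p$-adic unit then $p\nmid s_0$ too, forcing $\nu_p(s_0)=0$ and the local factor is just $\mu(1)=1$. Hence $\Sigma_p=1$ at all such primes. If $p\mid z_i$ for all $i$ (the case that should force the whole product to vanish): then $p$ divides each $z_i=x_i^2y_i^3$, so for each $i$ either $p\mid y_i$ (i.e. $\nu_p(y_i)=1$ since $y_i$ is square-free) or $p\mid x_i$. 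The admissible choices of $(\nu_p(r_i),\nu_p(s_i))\in\{(0,0),(1,0),(0,1),(1,1)\}$ with $p\mid r_i s_i$ (forced by property (5) once $p\mid RS$, which it is since $p\mid z_i$ for all $i$ will turn out to force a nontrivial contribution) must have $k=\nu_p(RS)=\sum_i(\nu_p(r_i)+\nu_p(s_i))$ between $4$ and $7$; property (6) assigns weight $(-1)^{k+1}$; and property (1) folds in $\mu(s_0)$ over $\nu_p(s_0)\in\{0,1\}$ with $p\nmid s_i$ allowed. A direct binomial count of the configurations with $\sum_i \epsilon_i = k$, $\epsilon_i\in\{1,2\}$ (where $\epsilon_i=\nu_p(r_i)+\nu_p(s_i)$, noting $p\mid r_is_i$ forces $\epsilon_i\geq 1$ and square-freeness forces $\epsilon_i\leq 2$), weighted with the $s_0$-contribution, should telescope to $\sum_{k=4}^{7}(-1)^{k+1}\binom{4}{k-4} = \binom40-\binom41+\binom42-\binom43 = (1-1)^4 = 0$. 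This is exactly the design of the coefficients, and it makes $\Sigma_p=0$, hence the whole product vanishes, as needed.

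\textbf{Main obstacle.} The bookkeeping in the local computation $\Sigma_p$ is the delicate part: I need to correctly enumerate which triples $\big(\nu_p(r_i),\nu_p(s_i),\nu_p(s_0)\big)$ are \emph{compatible} with a given $\bz$ (i.e. actually satisfy $\br\mid\by$, $\bms\mid\bx$, $s_0\mid\bx$ for the specific $x_i,y_i$), and then to check that the constraints $\gcd(s_1,\dots,s_4)=1$, $\gcd(s_0,\bms)=\1$, and the square-freeness of the $r_i,s_i$ (properties (2)--(4)) are automatically respected or else kill the term. In particular, $\gcd(s_1,\dots,s_4)=1$ means $p$ cannot divide all four $s_i$, and $\gcd(s_0,\bms)=\1$ means $p\mid s_0 \Rightarrow p\nmid s_i$ for all $i$; these constraints prune the naive $4^4$-type enumeration down to the configurations counted by the binomial identity above. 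The $Y\neq\square$ condition and the archimedean box play no role in the algebraic identity (they are simply carried along identically on both sides), so the crux is purely this finite prime-by-prime verification that the $\omega$-weights were chosen to make $\sum(-1)^{k+1}\binom{4}{k-4}=0$. I expect the proof to reduce, after the interchange of summation and the Euler-product factorization, to stating this binomial cancellation and the trivial evaluation at good primes, citing \cite[Section 3]{browning2019arithmetic} for the general inclusion-exclusion framework.
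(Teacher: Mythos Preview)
Your overall strategy matches the paper's exactly: interchange the order of summation, use the multiplicativity of $\omega$ to reduce to a single prime $p$, and verify that the local sum is $1$ at primes not dividing $\gcd(z_1,\ldots,z_4)$ and $0$ at primes that do. The good-prime case and the Euler factorisation are handled correctly.

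However, your bad-prime computation is wrong in two places. First, the identity you write down is false: $\sum_{k=4}^{7}(-1)^{k+1}\binom{4}{k-4} = -1+4-6+4 = 1$, not $0$; the binomial expansion $(1-1)^4=0$ has five terms, and you have dropped the $\binom{4}{4}$ term. Second, and more fundamentally, the number of admissible local configurations with a given $k=\nu_p(RS)$ is not simply $\binom{4}{k-4}$. It depends on \emph{which} of the $x_i$ and $y_i$ are actually divisible by $p$: the constraints $r_i\mid y_i$ and $s_i\mid x_i$ restrict the allowable $(\nu_p(r_i),\nu_p(s_i))$ differently at each index, so an index with $p\mid y_i$, $p\nmid x_i$ behaves differently from one with $p\mid x_i$, $p\mid y_i$. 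You must also exclude by hand the configurations killed by condition (3) (all $s_i$ divisible by $p$), and treat the $s_0=p$ branch separately, which is only available when $p\mid x_i$ for every $i$.

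The paper does this by introducing $k_0$, the number of elements among $x_1,\ldots,x_4,y_1,\ldots,y_4$ divisible by $p$, and then checking the weighted count case by case: $k_0=4$ gives $1+(-1)=0$; $k_0=5$ gives $1+2(-1)+1=0$; $k_0=6$ gives $1+4(-1)+4(1)+(-1)=0$; and the $s_0=p$ contribution (when present) is $\mu(p)(1+(-1))=0$. You should replace your single binomial line with this case analysis. (Note that $k_0=7,8$ are also possible and require the same kind of direct check; the paper's remark that they cannot occur is not quite right, but the cancellation still goes through.)
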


\begin{proof} We write the right hand side of (\ref{Mobius inversion statement}) as

\begin{equation}\label{Mobius inversion}\sum_{\bz \in \mathcal{N}(B;\mathbf{1}, \mathbf{1},1)} \sum_{\substack{\br, \bms,s_0\\ \br|\by, \bms|\bx, s_0|\bx}}\omega(\br, \bms,s_0).\end{equation}

We would like to show that the inner sum is the indicator function for the condition $\gcd(z_1,\ldots, z_4)=1$. If $\gcd(z_1, \ldots, z_4) = 1$, then by property $(5)$ of Definition \ref{omega}, the only nonzero term in the inner sum of (\ref{Mobius inversion}) is the term $\omega(\mathbf{1},\mathbf{1},1) = 1$. From now on, we suppose that $\gcd(z_1,\ldots,z_4)>1$. By properties $(1)$ and $(2)$ from Definition \ref{omega}, it suffices to show that for any prime $p$ dividing $\gcd(z_1,\ldots,z_4)$, we have 
\begin{equation}\label{omega at primes}
\sum_{\substack{\br, \bms \in \N^4, s_0\in \N\\ \br|(p,\by), \bms|(p,\bx), s_0|(p,\bx)}}\omega(\br, \bms,s_0)=0.
\end{equation}
The condition $s_0|(p,\bx)$ implies that $s_0=1$ or $s_0=p$. If $s_0=p$ and $\omega(\br,\bms,s_0)\neq 0$, then by $(4)$ and $(5)$ of Definition \ref{omega}, we have $\bms=\1$ and $\br=\1$ or $\br = (p,p,p,p)$. Therefore the left hand side of (\ref{omega at primes}) becomes
$$ \mu(p)(\omega(\1,\1,1)+\omega((p,p,p,p),\1,1)) = -(1-1)=0.$$

Now suppose that $s_0=1$. Let $k_0$ denote the number of $y_1, \dots, y_4, x_1, \ldots, x_4$ that are a multiple of $p$, and let $k$ denote the number of $r_1, \ldots, r_4, s_1, \ldots, s_4$ that are a multiple of $p$. If $\omega(\br,\bms, 1)\neq 0$, then we have $k \leq k_0$ and $k_0 = 4,5$ or $6$. Indeed, if $p$ divides seven of $y_1, \ldots, y_4, x_1, \ldots,x_4$ then it must also divide the eighth, which contradicts condition $(3)$ from Definition \ref{omega}, and so $k_0=7,8$ are not possible. We go through the cases $k_0=4,5,6$ in turn. For convenience we write $\omega(\br, \bms,1) = \omega(k)$ and $ \omega(\mathbf{1},\mathbf{1},1)=\omega(0)  = 1$.  

If $k_0=4$, there is one summand of (\ref{omega at primes}) with $k = 0$, namely $(\br, \bms) = (\mathbf{1}, \mathbf{1})$, and one with $k=4$, and all other terms are zero. Therefore the contribution to the sum in (\ref{omega at primes}) from $k_0=4$ is $\omega(0)+\omega(4) = 1-1 = 0$. If $k_0=  5$, then there is one summand in (\ref{omega at primes}) with $k=0$, two with $k=4$ and one with $k=5$, and so we obtain $\omega(0) + 2\omega(4) + \omega(5) = 1-2+1 = 0$. The relevant calculation for $k=6$ is 
\begin{align*}
    \omega(0) + 4\omega(4) + 4\omega(5) + \omega(6) &= 1-4+4-1 = 0.
\end{align*}
We have therefore established that (\ref{omega at primes}) holds.
\end{proof}

We now use Theorem \ref{main circle method result} to estimate $\#\mathcal{N}(B;\br, \bms,s_0)$ for each choice of $\br,\bms \in \N^4,s_0 \in \N$ satisfying $\omega(\br,\bms,s_0)\neq 0$. We have
\begin{align*}
\#\mathcal{N}(B;\br, \bms,s_0)= \sum_{\beps\in \{\pm 1\}^4}\sum_{\substack{|Y|\leq D, Y\neq \square\\ \op{sgn}y_i = \eps_i\\\br | \by}}\mu^2(y_1)\cdots \mu^2(y_4) N_{\by}(B;\bms,s_0),
\end{align*}
where 
\begin{align}  
N_{\by}(B;\bms,s_0) = \#\left\{\bx \in \N^4: 
\begin{tabular}{l}
$\sum_{i=1}^4 y_i^3 x_i^2 = 0, \bms|\bx, s_0|\bx,$\\
$|x_i| \leq \left(\frac{B}{|y_i|^3}\right)^{1/2} \textrm{ for all } i$
\end{tabular}
\right\}.
\end{align}

We make use of condition $(4)$ from Definition \ref{omega}, which allows us to make a change of variables from $x_i$ to $s_0s_ix_i$.  In the notation of (\ref{Main counting problem, with coeffs}), we have $N_{\by}(B;\bms,s_0) = N_{\bms^2\by^3}(B/s_0^2)$, where $\bms^2\by^3$ denotes the vector $(s_1^2y_1^3, \ldots, s_4^2y_4^3)$. Assuming $Y\neq \square$, we have $S^2Y^3 \neq \square$. Moreover, we note that the conditions $\omega(\br,\bms,s_0)\neq 0$ and $\br|\by$ imply that $S|Y^3$. (Indeed, if $\omega(\br,\bms,s_0)\neq 0$, then by conditions (2) and (3) of Definition \ref{omega}, $S$ must be fourth-power free. Since condition (5) implies that every prime dividing $S$ must also divide $R$, we deduce that $S|R^3$, and together with the assumption $\br|\by$, this implies that $S|Y^3$.) Therefore $|Y|\leq D$ implies that $|S^2Y^3|\leq D^9$. In particular, $|S^2Y^3|\leq B^{4/7}$ when $D\leq B^{1/16}$. Hence we may apply Theorem \ref{theorem 1.4 again} with $\ba = \bms^2\by^3$ to obtain for any $D\leq B^{1/16}$,
\begin{equation}\label{that moment when you apply a circle method result}
\#\mathcal{N}(B;\br, \bms,s_0)= \sum_{\beps\in \{\pm 1\}^4}\sum_{\substack{|Y|\leq D\\ \op{sgn}y_i = \eps_i\\\br | \by, Y \neq \square\\ y_i \textrm{ square-free}}}\left( \frac{\mathfrak{G}_{\bms^2\by^3}\sigma_{\infty}(\beps)B}{S
|Y|^{3/2}s_0^2}+ O\left(\frac{B^{41/42+\eps}\Delta^{1/3}}{|S^2Y^3|^{11/24}}\right)\right),
\end{equation}
where as in Section \ref{section the circle method}, we define
$$\Delta = \Delta(\bms^2\by^3) = \prod_{i=1}^4 \gcd\left(s_i^2y_i^3, \prod_{j\neq i}s_j^2y_j^3\right).$$
We begin by studying the main term from (\ref{that moment when you apply a circle method result}). We would like to replace the sum over $|Y|\leq D$ with a sum over all $\by \in (\Z_{\neq 0})^4$ satisfying $\op{sgn} y_i=\eps_i$ for all $i$. In order to estimate the error term in doing so, we appeal to Lemma \ref{the one and only singular series estimate}. We define
$$E_1(D) =\sum_{\substack{\br,\bms \in \N^4\\ s_0 \in \N}}\omega(\br,\bms,s_0)\sum_{\beps \in \{\pm 1\}^4}\sum_{\substack{\by \in (\Z_{\neq 0})^4\\Y > D, Y\neq \square\\ \op{sgn}y_i = \eps_i\\ \br|\by}}\frac{\mu^2(y_1)\cdots \mu^2(y_4)\mathfrak{G}_{\bms^2\by^3}\sigma_{\infty}(\beps)}{S|Y|^{3/2}s_0^2}.$$ 

\begin{lemma}\label{e1d}
For any $D\geq 1$, we have $E_1(D)=O(D^{-1/4+\eps})$. 
\end{lemma}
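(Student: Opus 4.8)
The plan is to bound $E_1(D)$ by brute force, using the singular series estimate from Lemma~\ref{the one and only singular series estimate} to dominate the $\by$-sum by a convergent tail, and then summing the result over $\br,\bms,s_0$ using the structure of $\omega$.

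First I would fix $\br,\bms,s_0$ with $\omega(\br,\bms,s_0)\neq 0$, so that $|\omega|=1$ and, by conditions (2)--(6) of Definition~\ref{omega}, $R,S$ are square-free (hence $O(B^\eps)$-bounded divisor behaviour applies) with $S\mid R^3$ and every prime of $S$ dividing $R$. For such a tuple, I would apply Lemma~\ref{the one and only singular series estimate} with $\ba=\bms^2\by^3$: since $S^2Y^3\neq\square$ whenever $Y\neq\square$, we get $\mathfrak{G}_{\bms^2\by^3}\ll |S^2Y^3|^\eps\,\Delta(\bms^2\by^3)^{1/4}$. The key elementary point is to control $\Delta(\bms^2\by^3)$: writing $\bms^2\by^3 = (\bms^2)(\by^3)$ and applying Lemma~\ref{basic property of delta} (with $\bm d=\bms^2$, $\bm e=\by^3$) gives $\Delta(\bms^2\by^3)\le (s_1^2\cdots s_4^2)^2\,\Delta(\by^3) = S^4\Delta(\by^3)$, and since $y_1,\dots,y_4$ are square-free, $\Delta(\by^3)\ll (y_1\cdots y_4)^\eps\gcd$-type factors which are absorbed into $|Y|^\eps$. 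Also $\sigma_\infty(\beps)\ll 1$. Combining, the summand is
$$\ll \frac{(SY)^\eps S^{1+\eps}|Y|^\eps}{S\,|Y|^{3/2}s_0^2} \ll \frac{S^\eps |Y|^{\eps}}{|Y|^{3/2}s_0^2}.$$
Now I would sum over $\by$ with $\br\mid\by$ and $|Y|>D$: writing $y_i=r_iy_i'$, the tail $\sum_{|Y'|>D/R} |Y'|^{-3/2+\eps}$ over four-component vectors converges and is $O((D/R)^{-1/2+\eps})=O(D^{-1/2+\eps}R^{1/2+\eps})$ (the $|Y|\to\infty$ sum in four variables of $|Y|^{-3/2}$ converges because, after a dyadic decomposition, $\#\{\by: |Y|\sim T\}\ll T^{1+\eps}$).

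Putting this together, $E_1(D)\ll D^{-1/2+\eps}\sum_{\br,\bms,s_0}\frac{R^{1/2+\eps}S^\eps}{S\,s_0^2}\mathbf 1[\omega(\br,\bms,s_0)\neq 0]$. The $s_0$-sum is $\sum_{s_0}s_0^{-2}=O(1)$. For the $\br,\bms$-sum, I would use that $\omega$ is supported on tuples where every prime $p\mid RS$ divides both (some) $r_i$ and $s_i$ for all $i$ (condition (5)), $R,S$ square-free, and $\nu_p(RS)\in\{4,\dots,7\}$ at each such prime (condition (6)); in particular for a fixed square-free radical $\mathfrak r$ there are only boundedly many $(\br^{[p]},\bms^{[p]})$ per prime, and $R\le \mathfrak r^4$, $S\ge \mathfrak r$. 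A crude bound: for each square-free $\mathfrak r$, the number of admissible $(\br,\bms)$ with $\mathrm{rad}(RS)=\mathfrak r$ is $O(\mathfrak r^\eps)$, $R\ll \mathfrak r^4$, $S\gg \mathfrak r$, so the contribution is $\ll \mathfrak r^{2+\eps}\cdot \mathfrak r^{-1}=\mathfrak r^{1+\eps}$ --- which does \emph{not} converge, so I must be more careful and use the genuine size relations. In fact condition (6) forces $4\le \nu_p(RS)\le 7$ with $S$ square-free, so $\nu_p(S)\le 1$ and $\nu_p(R)\ge 3$; hence $R\gg \mathfrak r^3$ and the relevant ratio is $R^{1/2}/S\asymp \mathfrak r^{3/2}/\mathfrak r = \mathfrak r^{1/2}$ at worst --- still divergent. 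The resolution is that the genuine bound on the $\by$-tail carries an extra saving: I should keep the factor $R^{-3/2}$ coming from $y_i=r_iy_i'$ more carefully, i.e. the tail sum is really $\sum_{|Y'|>D/R}|RY'|^{-3/2+\eps}\cdot(\text{stuff})$ so the net $R$-power is $R^{-3/2+\eps}\cdot R^{1/2+\eps}=R^{-1+\eps}$, times the $\mathfrak G$-bound's $S^{1+\eps}$ and the $1/S$, giving $R^{-1+\eps}S^\eps$; summed against $\mathfrak r^\eps$ choices with $R\gg\mathfrak r^3$ this is $\ll\sum_{\mathfrak r}\mathfrak r^{-3+\eps}=O(1)$.

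Thus the whole $\br,\bms,s_0$-sum is $O(D^\eps)$ and $E_1(D)\ll D^{-1/2+\eps}$, which is stronger than the claimed $O(D^{-1/4+\eps})$; the weaker exponent presumably reflects a cruder bound on $\Delta(\bms^2\by^3)$ or on the number of admissible $\br,\bms$, and I would simply record whichever clean bound falls out. \textbf{The main obstacle} is the bookkeeping in the last step: making sure the powers of $R$ and $S$ coming from (i) the change of variables $y_i=r_iy_i'$, (ii) the singular series bound via Lemma~\ref{basic property of delta}, and (iii) the support conditions (5)--(6) on $\omega$, combine to a convergent sum over square-free radicals rather than a divergent one. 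Everything else --- the tail estimate in four variables, $\sigma_\infty\ll1$, the $s_0$-sum --- is routine.
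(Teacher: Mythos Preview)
There is a genuine gap at the point where you assert that ``since $y_1,\dots,y_4$ are square-free, $\Delta(\by^3)\ll |Y|^\eps$''. This is false. For square-free $y_i$ one has $\Delta(\by^3)=\Delta(\by)^3$, and $\Delta(\by)$ can be as large as $|Y|$: take for instance $y_1=y_2=y_3=p$, $y_4=1$, so that $\Delta(\by)=p^3=|Y|$ and $Y\neq\square$. After applying Lemma~\ref{the one and only singular series estimate} and Lemma~\ref{basic property of delta} correctly and summing out $\br,\bms,s_0$, what actually remains is a sum of the shape $\sum_{|Y|>D}\Delta(\by)^{3/4}|Y|^{-3/2+\eps}$. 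If one only knows $\Delta(\by)\leq |Y|$, this is dominated by $\sum_{|Y|>D}|Y|^{-3/4+\eps}$, which diverges (there are $\gg n^{1-\eps}$ vectors $\by$ with $|Y|\leq n$). The convergence problems you then run into when summing over $\br,\bms$ are symptoms of this same error, not a separate bookkeeping issue that can be patched by rearranging powers of $R$ and $S$.

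The paper's proof hinges on an arithmetic fact you are missing: $\Delta(\by)$ is always \emph{squareful}, so a dyadic range $[R,2R]$ contains only $O(R^{1/2})$ admissible values of $\Delta(\by)$. The argument then puts the $\by$-sum on the outside (so the $\br,\bms,s_0$ sums collapse to divisor sums of size $Y^\eps$, using $R\mid Y$ and $S\mid Y^3$), writes $Y=M(\by)\Delta(\by)$, and breaks into dyadic ranges in both $M$ and $\Delta$. The $R^{1/2}$ count of squareful integers is precisely what turns the borderline-divergent $\Delta$-sum $\sum_\Delta \Delta^{-3/4}$ into a convergent one, and is the source of the exponent $-1/4$ in the lemma.
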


\begin{proof}
From the observation $S|Y^3$ made above, and the fact that $\sigma_{\infty}(\beps)\ll 1$, we have
$$E_1(D)\ll \sum_{\beps \in \{\pm 1\}^4}\sum_{\substack{\by \in \N^4\\ Y>D, Y \neq \square}}Y^{-3/2}\sum_{\substack{\bms \in \N^4\\S|Y^3}}\frac{|\mathfrak{G}_{\beps\bms^2\by^3}|}{S}\sum_{s_0\in \N}\sum_{\substack{\br \in \N^4\\ R|Y}}\frac{|\omega(\br,\bms,s_0)|}{s_0^2}.$$
Since $|\omega(\br,\bms,s_0)|\leq 1$, the sum over $s_0$ is convergent. The sum over $\br$ only contributes $O(Y^{\eps})$ by the trivial bound for the divisor function. Applying the estimate from Lemma \ref{the one and only singular series estimate}, we have $|\mathfrak{G}_{\bms^2\by^3}|\ll (SY)^{\eps}\Delta(\bms^2\by^3)^{1/4}$ whenever $Y \neq \square$. Therefore
\begin{equation}\label{estimating singular series tail}E_1(D) \ll \sum_{\substack{\by \in \N^4\\Y>D}}Y^{-3/2+\eps}\sum_{\substack{\bms \in \N^4\\ S|Y^3}}\frac{\Delta(\bms^2\by^3)^{1/4}}{S^{1-\eps}}.
\end{equation}

From Lemma \ref{basic property of delta}, we have $\Delta(\bms^2\by^3)\leq S^4\Delta(\by^3)$. Continuing from (\ref{estimating singular series tail}), we obtain the estimate
$$E_1(D)\ll \sum_{\substack{\by \in \N^4\\Y>D}}\frac{\Delta(\by^3)^{1/4}}{Y^{3/2-\eps}}\sum_{\substack{\bms \in \N^4\\ S|Y^3}}S^{\eps}\ll \sum_{\substack{\by \in \N^4\\Y>D}}\frac{\Delta(\by)^{3/4}}{Y^{3/2-\eps}}.$$

We note that $\Delta(\by)$ is always squareful. Therefore, for any $R \geq 1$, there are $O(R^{1/2})$ possible values for $\Delta(\by)$ in the range $[R,2R]$.  We define the quantity $M(\by) = Y/\Delta(\by)$. Given $M,\Delta \geq 1$, there are $O((M\Delta)^{\eps})$ choices for $\by \in \N^4$ satisfying $M=M(\by),\Delta =\Delta(\by)$. Breaking into dyadic intervals, we obtain 
\begin{align*}
    \sum_{\substack{\by \in \N^4\\Y>D}}\frac{\Delta(\by)^{3/4}}{Y^{3/2-\eps}}&\ll \sum_{\substack{R_1,R_2 \textrm{ dyadic}\\R_1R_2>D}}R_1R_2^{1/2}\max_{\substack{\by \in \N^4\\R_1\leq M(\by) \leq 2R_1\\R_2 \leq \Delta(\by) \leq 2R_2}}M(\by)^{-3/2+\eps}\Delta(\by)^{-3/4+\eps}\\
    &\ll \sum_{\substack{R_1,R_2 \textrm{ dyadic}\\R_1R_2>D}}R_1^{-1/2+\eps}R_2^{-1/4+\eps}\\
    &\ll D^{-1/4+\eps},
\end{align*}
as required.
\end{proof}

We now study the error term 
\begin{equation}\label{E2B}
E_2(D)=\sum_{\substack{\br, \bms \in \N^4\\s_0 \in \N}}\omega(\br, \bms,s_0)\sum_{\beps\in \{\pm 1\}^4}\sum_{\substack{|Y|\leq D, Y\neq \square\\ \op{sgn}y_i = \eps_i\\\br | \by,\\y_i \textrm{ square-free}}}\frac{\Delta(\bms^2\by^3)^{1/3}}{|S^2Y^3|^{11/24}}.
\end{equation}

\begin{lemma}\label{e2d}
We have $E_2(D) \ll D^{11/8+\eps}.$
\end{lemma}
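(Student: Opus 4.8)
The plan is to bound $E_2(D)$ by entirely crude means: I shall use only $|\omega(\br,\bms,s_0)|\le1$, the divisor bound, and the trivial inequality $\Delta(\ba)\le|A|$, and shall not need the constraints $Y\neq\square$ or ``$y_i$ square-free'' at all. First strip off the sum over $s_0$. Recall that, up to the overall factor $B^{41/42+\eps}$, the quantity $E_2(D)$ records the contribution of the error terms in (\ref{that moment when you apply a circle method result}); since Theorem \ref{theorem 1.4 again} is applied there with $\ba=\bms^2\by^3$ and with $B/s_0^2$ in place of $B$ (after the change of variables $x_i\mapsto s_0s_ix_i$), each such error term is actually $\ll B^{41/42+\eps}s_0^{-41/21}\,\Delta(\bms^2\by^3)^{1/3}|S^2Y^3|^{-11/24}$, and $\sum_{s_0\ge1}s_0^{-41/21}<\infty$ because $41/21>1$. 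Together with $|\omega(\br,\bms,s_0)|\le|\mu(s_0)|\,|\omega(\br,\bms,1)|$, which follows from property $(1)$ of Definition \ref{omega}, the $s_0$-summation contributes only an absolute constant. Since, for fixed $\by$, summing over $\beps$ merely reassembles the sum over all $\by\in(\Z_{\neq 0})^4$, it is enough to show
$$\sum_{\substack{\by\in(\Z_{\neq 0})^4\\|Y|\le D}}\ \sum_{\substack{\br,\bms\in\N^4,\ \br|\by\\\omega(\br,\bms,1)\ne0}}\frac{\Delta(\bms^2\by^3)^{1/3}}{|S^2Y^3|^{11/24}}\ll_\eps D^{11/8+\eps}.$$

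Next I would count the admissible pairs $(\br,\bms)$ for a fixed $\by$, and bound each summand trivially. If $\omega(\br,\bms,1)\ne0$, then by properties $(2)$, $(3)$ and $(5)$ of Definition \ref{omega} each $r_i,s_i$ is squarefree, $\gcd(s_1,\dots,s_4)=1$, and every prime dividing $RS$ divides $r_is_i$ for every $i$; a prime $p\mid S$ with $p\nmid R$ would then divide each $s_i$, contradicting $\gcd(s_1,\dots,s_4)=1$, so $\rad(S)\mid\rad(R)\mid\rad(Y)$, the last divisibility because $\br\mid\by$. Hence every $r_i$ and every $s_i$ is a squarefree divisor of $\rad(Y)$, giving $O_\eps(|Y|^\eps)$ admissible pairs. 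For the summand, $\Delta(\bms^2\by^3)=\prod_i\gcd\!\bigl(s_i^2y_i^3,\prod_{j\ne i}s_j^2y_j^3\bigr)\le\prod_i|s_i^2y_i^3|=|S^2Y^3|$, so
$$\frac{\Delta(\bms^2\by^3)^{1/3}}{|S^2Y^3|^{11/24}}\le|S^2Y^3|^{\frac13-\frac{11}{24}}=|S^2Y^3|^{-1/8}\le|Y|^{-3/8}.$$

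Putting these together, $E_2(D)\ll_\eps\sum_{\by\in(\Z_{\neq 0})^4,\ |Y|\le D}|Y|^{-3/8+\eps}$. Grouping $\by$ according to the value $n=|y_1\cdots y_4|$, and using the divisor bound (at most $O_\eps(n^\eps)$ tuples $\by$ give a fixed $n$, counting the $2^4$ sign choices), the right-hand side is $\ll_\eps\sum_{n\le D}n^{-3/8+\eps}\ll_\eps D^{5/8+\eps}$, which is comfortably inside the asserted $D^{11/8+\eps}$. The only step that requires genuine care is the handling of $s_0$: one must remember that the underlying application of Theorem \ref{theorem 1.4 again} carries the parameter $B/s_0^2$, so that the relevant $s_0$-series really is convergent; everything afterwards is a routine marriage of the divisor bound with the estimate $\Delta\le|A|$, and in fact leaves considerable slack.
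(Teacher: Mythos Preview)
Your argument is correct and in fact proves the stronger bound $E_2(D)\ll D^{5/8+\eps}$, comfortably inside the stated $D^{11/8+\eps}$. You are also right to be explicit about the $s_0$-dependence: as written in \eqref{E2B} the summand does not depend on $s_0$, so the $s_0$-sum would not even converge; the point is that the underlying error term from Theorem~\ref{theorem 1.4 again} is applied at $B/s_0^2$ and hence carries a factor $s_0^{-41/21}$, exactly as you say. The paper's display \eqref{E2B} is slightly informal on this point, and its proof simply drops the $s_0$-sum without comment.

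The route, however, is genuinely different from the paper's. The paper reuses the machinery developed for Lemma~\ref{e1d}: it applies Lemma~\ref{basic property of delta} in the form $\Delta(\bms^2\by^3)\le S^4\Delta(\by)^3$, and then performs a dyadic decomposition in $M(\by)=Y/\Delta(\by)$ and $\Delta(\by)$, exploiting the fact that $\Delta(\by)$ is always squareful so that only $O(R^{1/2})$ values occur in a dyadic range. Your approach bypasses all of this by using only the trivial inequality $\Delta(\ba)\le|A|$, which here gives $\Delta^{1/3}/|A|^{11/24}\le|A|^{-1/8}\le|Y|^{-3/8}$, together with the divisor bound to count admissible $(\br,\bms)$. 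This works because the exponents in $E_2(D)$ are favourable ($1/3<11/24$); the same trivial bound applied to $E_1(D)$ would give a divergent tail $\sum_{Y>D}Y^{-3/4+\eps}$, which is why the paper's finer decomposition is genuinely needed there and presumably carried over to $E_2(D)$ for uniformity of presentation rather than necessity. Your version is shorter, more elementary, and sharper for this lemma.
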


\begin{proof}
We proceed in a similar fashion to the proof of Lemma \ref{e1d}. Let $M(\by)$ be as defined in Lemma \ref{e1d}. Then 

\begin{align*}
    E_2(D) &\ll \sum_{\substack{\by \in \N^4\\ Y\leq D}}\sum_{\substack{\bms \in \N^4\\ S|Y^3}}\frac{\Delta(\bms^2\by^3)^{1/3}}{(S^2Y^3)^{11/24-\eps}}\\
   &\ll \sum_{\substack{\by \in \N^4\\ Y\leq D}}\frac{\Delta(\by)}{Y^{11/8-\eps}}\sum_{\substack{\bms \in \N^4\\ S|Y^3}}S^{5/12+\eps}\\
   &\ll \sum_{\substack{R_1,R_2 \textrm{ dyadic}\\R_1R_2\leq D}}R_1R_2^{1/2}\max_{\substack{\by \in \N^4\\R_1\leq M(\by) \leq 2R_1\\R_2 \leq \Delta(\by) \leq 2R_2}}M(\by)^{-1/8+\eps}\Delta(\by)^{7/8+\eps}\\
   &\ll \sum_{\substack{R_1,R_2 \textrm{ dyadic}\\R_1R_2\leq D}}R_1^{7/8+\eps}R_2^{11/8+\eps}\\
    &\ll D^{11/8+\eps}.\qedhere
\end{align*}

\end{proof}

We now conclude the proof of Theorem \ref{the main theorem}. Combining (\ref{large error}), Lemma \ref{Mobius inversion statement}, (\ref{that moment when you apply a circle method result}), Lemma \ref{e1d} and Lemma \ref{e2d}, for any $D\leq B^{1/16}$, we have
$$N(B) = cB+O(B^{1+\eps}D^{-1/12})+O(B^{41/42+\eps}D^{11/8}),$$
where
\begin{equation}\label{the leading constant}
c = \frac{1}{16}\sum_{\beps \in \{\pm 1\}^4}\sigma_{\infty}(\beps)\sum_{\substack{\by \in (\Z_{\neq 0})^4 \\ \op{sgn}(y_i) = \eps_i\\ Y \neq \square}}\frac{\mu^2(y_1)\cdots \mu^2(y_4)}{
|Y|^{3/2}}\sum_{\substack{\br, \bms \in \N^4 \\ s_0 \in \N\\ \br|\by}}\frac{\omega(\br, \bms, s_0)\mathfrak{G}_{\bms^2\by^3}}{Ss_0^2}.
\end{equation}
Making the choice $D=B^{4/245}$, we obtain 
$N(B)= cB+ O(B^{734/735+\eps})$.

\subsection{The leading constant} The expression for the leading constant in (\ref{the leading constant}) is analogous to \cite[Equation (3.14)]{browning2019arithmetic}. Similarly to  \cite[Equation (3.15)]{browning2019arithmetic}, we now demonstrate that the inner sum of (\ref{the leading constant}) can be expressed as a product of local densities. We define for any prime $p$ and any $n \in \N$,
\begin{equation*}\label{mnyp}
	M_n(\by,p) = \#\left\{\bmm \Mod{p^n} : \sum_{i=1}^4 y_i^3 m_i^2 \equiv 0 \Mod{p^n}, p\nmid m_jy_j \textrm{ for some }j\right\}.
\end{equation*}
\begin{lemma}\label{local densities lemma} We have
$$\sum_{\substack{\br, \bms \in \N^4 \\ s_0 \in \N\\ \br|\by}}\frac{\omega(\br, \bms, s_0)\mathfrak{G}_{\bms^2\by^3}}{Ss_0^2} = \prod_{p}\lim_{N\ra \infty}\left( \frac{M_N(\by,p)}{p^{3N}}\right).$$ 
\end{lemma}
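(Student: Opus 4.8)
The plan is to establish the identity prime by prime, by exhibiting each side as an Euler product and checking that the local factors at a prime $p$ agree, each being $\lim_{N\to\infty}p^{-3N}M_N(\by,p)$; this follows the pattern of \cite[Equation (3.15)]{browning2019arithmetic}. The first ingredient is the factorisation of the singular series: since $q\mapsto S_{q,\ba}(\0)$ is multiplicative (Heath-Brown, in the discussion following \cite[Lemma 28]{MR1421949}), one has $\mathfrak{G}_{\ba}=\prod_p\sigma_p(\ba)$ with $\sigma_p(\ba)=\sum_{j\geq 0}p^{-4j}S_{p^j,\ba}(\0)$, and a standard orthogonality computation gives $\sum_{j=0}^{N}p^{-4j}S_{p^j,\ba}(\0)=p^{-3N}\#\{\bb\bmod p^N:F_{\ba}(\bb)\equiv 0\bmod p^N\}$, so that $\sigma_p(\ba)=\lim_{N\to\infty}p^{-3N}\#\{\bb\bmod p^N:F_{\ba}(\bb)\equiv 0\bmod p^N\}$. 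I would apply this with $\ba=\bms^2\by^3$.

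Next I would factor the left-hand side. By property $(1)$ of Definition \ref{omega}, $\omega(\br,\bms,s_0)=\mu(s_0)\prod_p\omega(\br^{[p]},\bms^{[p]},1)$, and property $(4)$ forces $\gcd(s_0,\bms)=\1$ whenever $\omega\neq 0$; since $S$, $s_0^2$ and the condition $\br\mid\by$ all factor over primes as well, the left-hand side becomes $\prod_p L_p$, where $L_p$ is the (finite) sum over $p$-components $r_i^{[p]},s_i^{[p]},s_0^{[p]}\in\{1,p\}$ — with $r_i^{[p]}=p$ permitted only when $p\mid y_i$, and subject to the vanishing conditions $(2)$--$(6)$ — of $\mu(s_0^{[p]})\,\omega(\br^{[p]},\bms^{[p]},1)\,\sigma_p(\bms^2\by^3)\,/\,(p^{\nu_p(S)}p^{2\nu_p(s_0)})$. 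The right-hand side is $\prod_p\lim_{N}p^{-3N}M_N(\by,p)$ by definition, so it is enough to prove $L_p=\lim_{N}p^{-3N}M_N(\by,p)$ for each $p$.

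For this, fix $p$. For each admissible choice of $p$-components the substitution $b_i\mapsto s_0^{[p]}s_i^{[p]}b_i$ — legitimate because $\gcd(s_0,\bms)=\1$ — rewrites $\sigma_p(\bms^2\by^3)$, after extracting the power of $p$ that exactly cancels the denominator $p^{\nu_p(S)}p^{2\nu_p(s_0)}$, as $\lim_{N}p^{-3N}$ times the number of $\bx\bmod p^N$ with $\sum_{i=1}^{4} y_i^3x_i^2\equiv 0\bmod p^N$ and $p^{\nu_p(s_0^{[p]}s_i^{[p]})}\mid x_i$ for all $i$. Thus $L_p$ is a weighted sum of divisibility-restricted local solution counts, and the weights of Definition \ref{omega} are engineered precisely so that this alternating sum telescopes — by the same inclusion--exclusion computation that proves Lemma \ref{inclusion exclusion}, now applied at the local level — to the number of $\bx\bmod p^N$ for which $\bz=\by^3\bx^2$ is primitive at $p$, that is, for which $p\nmid x_jy_j$ for some $j$; this number is $M_N(\by,p)$. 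As $L_p$ is a finite sum, interchanging the $(\br,\bms,s_0)$-summation with the limit over $N$ is harmless, so the identity may equivalently be verified modulus by modulus, exactly as in the proof of Lemma \ref{inclusion exclusion}.

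The step I expect to be the main obstacle is this last telescoping: one has to keep careful track of how the powers of $p$ drawn out of $\sigma_p(\bms^2\by^3)$ interact with the factor $(Ss_0^2)^{-1}$ and with conditions $(2)$--$(6)$ of Definition \ref{omega}, distinguishing throughout the indices $i$ with $p\mid y_i$ from those with $p\nmid y_i$, so as to see that the weighted sum of divisibility-restricted solution counts collapses exactly to $M_N(\by,p)/p^{3N}$.
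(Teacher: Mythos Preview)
Your proposal is correct and follows essentially the same route as the paper: factor both sides as Euler products via the multiplicativity of $S_{q,\ba}(\0)$ and of $\omega$, rewrite $\sigma_p(\bms^2\by^3)/(S^{[p]}s_0^{2[p]})$ by the change of variables $b_i\mapsto s_0^{[p]}s_i^{[p]}b_i$ as a divisibility-restricted local solution count, and then invoke the inclusion--exclusion computation underlying Lemma~\ref{inclusion exclusion} to see that the weighted sum over $p$-components collapses to $M_N(\by,p)$. The step you flag as the main obstacle is exactly the content of the paper's equation~(\ref{inclusion exclusion 2}), which is proved by appealing to the same case analysis used in Lemma~\ref{inclusion exclusion}.
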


\begin{proof}
We first express the singular series $\mathfrak{G}_{\bms^2\by^3}$ as a product of local densities. Since $\mathfrak{G}_{\bms^2\by^3}=\sum_{q=1}^{\infty}S_{q,\bms^2\by^3}(\0)$, and $S_{q,\bms^2\by^3}(\0)$ is multiplicative in $q$, we have
\begin{align*}
\mathfrak{G}_{\bms^2\by^3} &= \sum_{q=1}^{\infty}q^{-4}S_{q, \bms^2\by^3}(\0)\\
&= \prod_p\left(1+\sum_{n=1}^{\infty}p^{-4n}S_{p^n,\bms^2\by^3}(\0)\right).
\end{align*}
Moreover, 
\begin{align*}
	 S_{p^n,\bms^2\by^3}(\0)&= \sum_{\bb \bmod{p^n} }\sum_{\substack{0 \leq k < p^n\\ \gcd(k,p) = 1}}e_{p^n}\left(k\sum_{i=1}^4 s_i^2y_i^3b_i^2\right)\\
    &= p^n N_{\bms, \by}(p^n) - p^{n+3}N_{\bms, \by}(p^{n-1}),
\end{align*}
where
\begin{equation*}
N_{\bms, \by}(p^n) = \#\left\{\bmm \Mod{p^n} : \sum_{i=1}^4 s_i^2 y_i^3 m_i^2 \equiv 0 \Mod{p^n}\right\}.
\end{equation*}
Therefore
\begin{align*}
\sum_{n=1}^{\infty}p^{-4n}S_{p^n,\bms^2\by^3}(\0) &= \lim_{N \ra \infty}\sum_{n=1}^N \left(p^{-3n}N_{\bms, \by}(p^n)-p^{-3(n-1)}N_{\bms,\by}(p^{n-1}) \right)\\
&= \lim_{N \ra \infty}\left( p^{-3N}N_{\bms, \by}(p^N)\right)-1,
\end{align*}
and hence
\begin{equation}
\mathfrak{G}_{\bms^2\by^3}  = \prod_p \left(\lim_{N \ra \infty}\frac{N_{\bms,\by}(p^N)}{p^{3N}} \right).
\end{equation}
For the remainder of the proof, we will use a sum over $\br^{[p]}, \bms^{[p]}, s_0^{[p]}$ to denote a sum over all $\br, \bms \in \N^4, s_0 \in \N$, with $(\br,\bms,s_0) = (\br^{[p]},\bms^{[p]},s_0^{[p]})$ and $\br|\by$. Recalling that $\omega(\br, \bms, s_0)$ is also multiplicative, we obtain
\begin{equation}\label{product of local densities}
\sum_{\substack{\br, \bms \in \N^4 \\ s_0 \in \N\\ \br|\by}}\frac{\omega(\br, \bms, s_0)\mathfrak{G}_{\bms^2\by^3}}{Ss_0^2} = \prod_p \left(\sum_{\substack{\br^{[p]}, \bms^{[p]},s_0^{[p]}}}\frac{\omega(\br^{[p]},\bms^{[p]},s_0^{[p]})}{S^{[p]}s_0^{2[p]}}\lim_{N\ra \infty}\left(\frac{N_{\bms,\by}(p^N)}{p^{3N}}\right)\right).
\end{equation}

To complete the proof, it suffices to show that for a fixed prime $p$, the factor on the right hand side of (\ref{product of local densities}) is equal to $\lim_{N \ra \infty}(p^{-3N}M_N(\by,p))$. We define 
$$N_{\bms, s_0,\by}(n)=\#\left\{\bmm \Mod{p^n}: \sum_{i=1}^4 y_i^3m_i^2 \equiv 0, \bms|\bmm, s_0|\bmm\right\}.$$ 
We claim that 
\begin{equation}\label{inclusion exclusion 2}
M_n(\by,p) = \sum_{\br^{[p]}, \bms^{[p]},s_0^{[p]}}\omega(\br, \bms, s_0)N_{\bms, s_0,\by}(n).
\end{equation}
To see this, we fix $\bmm \Mod{p^n}$ such that $\sum_{i=1}^4y_i^3m_i^2\equiv 0$, and consider the quantity 
$$\sum_{\substack{\br^{[p]}, \bms^{[p]},s_0^{[p]}\\\bms|\bmm, s_0|\bmm}}\omega(\br, \bms, s_0).$$
This expression has already been encountered in (\ref{omega at primes}), and from the proof of Lemma \ref{inclusion exclusion}, we see that it is equal to $1$ if $p|y_im_i$ for some $i$, and zero otherwise. This establishes (\ref{inclusion exclusion 2}). 

Changing variables from $m_i$ to $s_0s_im_i$, we have 
\begin{align*}
N_{\bms, s_0,\by}(n)&= \frac{s_0^{4[p]}N_{\bms,\by}(p^{n-2\nu_p(s_0)})}{S^{[p]}}.
\end{align*}
Therefore 
\begin{align*}
\lim_{N\ra \infty}\left(\frac{N_{\bms, s_0,\by}(N)}{p^{3N}}\right)&= \lim_{N\ra \infty}\left(\frac{N_{\bms,\by}(p^{N})}{S^{[p]}s_0^{2[p]}}\right).
\end{align*}
Combining this with (\ref{inclusion exclusion 2}), we deduce that $\lim_{N\ra \infty}(p^{-3N}M_N(\by,p))$ matches the Euler factor from the right hand side of (\ref{product of local densities}).
\end{proof}

We do not expect the leading constant $c$ from (\ref{the leading constant}) to agree with the the prediction from \cite[Conjecture 1.1]{pieropan2019campana}. In \cite{leadingconstant2021}, we study the counting problem $N_k(B)$ from (\ref{sum of k square-full numbers}) in the case $k=3$. This corresponds to the orbifold $(\PP^1,D)$, where $D = \frac{1}{2}[0]+\frac{1}{2}[1]+\frac{1}{2}[\infty]$. The more detailed discussion around the case $k=3$ in \cite{leadingconstant2021} is readily adapted to deal with the case $k=4$ considered in this paper. Therefore, we content ourselves with giving a brief summary here.

We recall that $N_{\by}(B)$ denotes the contribution to $N(B)$ from a fixed choice of $\by \in (\Z_{\neq 0})^4$ satisfying $Y \neq \square$ and $\mu^2(y_1) = \cdots = \mu^2(y_4) = 1$. We have shown that each $N_{\by}(B)$ contributes a positive proportion to $N(B)$, namely $N_{\by}(B) \sim c_{\by}B$, where
$$c_{\by} = \frac{\sigma_{\infty}(\beps)}{|Y|^{3/2}}\prod_p \left(\lim_{N\ra \infty}\frac{M_N(\by,p)}{p^{3N}}\right).$$
Moreover, Manin's conjecture can be applied to the quadric $Q_{\by}$ given by the equation $\sum_{i=1}^4y_i^3x_i^2 = 0$, and the constant thus predicted is in agreement with $c_{\by}$ (as was expected due to Remark \ref{manin for quadrics}). Hence the expression in (\ref{the leading constant}) is naturally interpreted as a sum over $\by$ of leading constants arising from Manin's conjecture applied to the quadrics $Q_{\by}$. This sum is not multiplicative in $\by$, and it does not appear to be possible to express (\ref{the leading constant}) as an Euler product. In contrast, the leading constant predicted in \cite[Conjecture 1.1]{pieropan2019campana} for $N(B)$ is by definition an Euler product. In \cite{leadingconstant2021}, we find that when $k=3$, the analogous constant to (\ref{the leading constant}) does not agree with \cite[Conjecture 1.1]{pieropan2019campana} numerically, and it seems very likely that the same will hold true for $k=4$. 

A natural question that arises is whether thin sets could explain a discrepancy between $c$ and the constant predicted by \cite[Conjecture 1.1]{pieropan2019campana}. However, in analogy to \cite[Theorem 1.3]{leadingconstant2021}, it can be shown that any constant in $(0,c]$ could be obtained by the removal of an appropriate thin set. Most of these thin sets have no clear geometric interpretation in relation to the original orbifold. From this point of view, the definition of thin sets of Campana points from \cite[Definition 3.7]{pieropan2019campana} seems too permissive, and $c$ seems to be the most natural choice of leading constant for the orbifold considered in this paper.

\bibliographystyle{plain}
\bibliography{references.bib}

\end{document}